\newcommand{\pierre}[1]{{\color{black}#1}}
\newcommand{\plb}[1]{{\color{black}#1}}
      \theoremstyle{plain}
\newtheorem{theorem}{Theorem}
\newtheorem{lemma}{Lemma}
\newtheorem{remark}{Remark}
\numberwithin{equation}{section} 
\numberwithin{lemma}{section} 
\numberwithin{remark}{section} 
\numberwithin{example}{section}
\numberwithin{corollary}{section}
\numberwithin{proposition}{section}
\date{}
\author{Arnaud Guillin\footnote{Laboratoire de Math\'{e}matiques Blaise Pascal  - Universit\'{e} Clermont-Auvergne and Institut Universitaire de France. Email : arnaud.guillin[AT]uca.fr},$\ $ Pierre Le Bris\footnote{ Laboratoire Jacques-Louis Lions - Sorbonne Université. Email : pierre.lebris[AT]sorbonne-universite.fr}$\ $ and Pierre Monmarch\'{e}\footnote{ Laboratoire Jacques-Louis Lions - Sorbonne Université. Email : pierre.monmarche[AT]sorbonne-universite.fr}}
\title{Some remarks on the effect of the Random Batch Method on phase transition}
\begin{document}

\maketitle

\centerline{\it In memory of Francis Comets}

\begin{abstract}
In this article, we focus on two toy models : the \textit{Curie-Weiss} model and the system of $N$ particles in linear interactions in a \textit{double well confining potential}. Both models, which have been extensively studied, describe a large system of particles with a mean-field limit that admits a phase transition. We are concerned with the numerical simulation of these particle systems. To deal with the quadratic complexity of the numerical scheme, corresponding to the computation of the $O(N^2)$ interactions per time step, the \textit{Random Batch Method} (RBM) has been suggested. It consists in randomly (and uniformly) dividing the particles into batches of size $p>1$, and computing the interactions only within each batch, thus reducing the numerical complexity to $O(Np)$ per time step. The convergence of this numerical method has been proved in other works. 

This work is motivated by the observation that the RBM, via the random constructions of batches, artificially adds noise to the particle system. The goal of this article is to study the effect of this added noise on the phase transition of the nonlinear limit, and more precisely we study the \textit{effective dynamics} of the two models to show how a phase transition may still be observed with the RBM but at a lower critical temperature.  
\end{abstract}


%
%
%
%

\section{Introduction}

%
%
%
%

\subsection{Motivation}

Consider a system of $N$ particles $(X^i)_{i\in\{1,...,N\}}$ in interaction
\begin{equation}\label{eq:IPS}
\tag{IPS}
    dX^i_t=-\nabla U(X^i_t)dt-\frac{1}{N-1}\sum_{j\neq i}\nabla W(X^i_t-X^j_t)dt+\sqrt{2\sigma}dB^i_t,
\end{equation}
where for all $i\in\{1,...,N\}$ and $t\geq0$ we have $X^i_t\in\mathbb{R}^d$, $U$ and $W$ are two twice continuously differentiable functions, respectively called \textit{confining potential} and \textit{interaction potential}, $\sigma>0$ is a \textit{diffusion coefficient} or \textit{temperature}, and $(B^i)_i$ are independent $d$-dimensional Brownian motions. The name \eqref{eq:IPS} refers to \textit{Interacting Particle System}.

It is well known (see \cite{CD22-1,CD22-2} and references therein) that, under suitable assumptions on $U$ and $W$, the particle system \eqref{eq:IPS} converges as $N\rightarrow\infty$ towards its nonlinear mean-field limit, a stochastic differential equation (SDE) of \textit{McKean-Vlasov} type
\begin{equation}\label{eq:NL}
\tag{NL}
\left\{
\begin{array}{ll}
    d\bar{X}_t=-\nabla U(\bar{X}_t)dt-\nabla W\ast \bar{\rho}_t(\bar{X}_t)dt+\sqrt{2\sigma}dB_t,\\
    \bar{\rho}_t=\text{Law}(\bar{X}_t).
\end{array}   
\right.
\end{equation}
Here, the name \eqref{eq:NL} refers to \textit{Nonlinear Limit}, and this equation arises in the modelling of granular media \cite{CGM08}.

The quantitative link between of \eqref{eq:IPS} and \eqref{eq:NL} can be exploited in various ways. On one hand, as it was historically motivated, the study of (way too) large systems of particles cannot be feasible, and boiling it down to the study of the nonlinear limit yields exploitable results. On the other hand, one can see \eqref{eq:IPS} as an approximation of \eqref{eq:NL}, and in particular an approximation that can be numerically simulated. Consider the \textit{Euler-Maruyama} scheme associated to \eqref{eq:IPS} with a timestep $\delta>0$
\begin{equation}\label{eq:D-IPS}
\tag{D-IPS}
\left\{
\begin{array}{ll}
    X^{i,\delta}_{t+1}=X^{i,\delta}_{t}-\delta \nabla U(X^{i,\delta}_{t})-\frac{\delta}{N-1}\sum_{j\neq i}\nabla W(X^{i,\delta}_{t}-X^{j,\delta}_{t})+\sqrt{2\sigma\delta}G^i_t,\\
    G^i_t\ \text{ i.i.d }\ \sim\mathcal{N}(0,1),\ \ \ t\in\mathbb{N}.
\end{array}  
\right.
\end{equation}
Its name \eqref{eq:D-IPS} comes from \textit{Discrete - Interacting Particle System}. The convergence of \eqref{eq:D-IPS} towards \eqref{eq:NL} has been extensively studied : with bounded Lipschitz coefficients \cite{BT97}, with Hölder continuous coefficients \cite{BH19}, non-Lipschitz coefficients \cite{DQ21}. The quantitative convergence of the implicit Euler-Maruyama scheme can also be found in \cite{Mal03}.

Notice that this numerical scheme requires $O(N^2)$ operations per time step, corresponding to the total number of interactions of pairs $(i,j)_{i,j\in\{1,...,N\}}$. To cope with this possibly limiting complexity, several works have suggested using the \textit{Random Batch Method} (RBM) (see for instance \cite{JLL20}), motivated by the Stochastic Gradient Langevin Dynamics \cite{WT11}.

Consider, for a time step $t\in\mathbb{N}$, a partition  $\mathcal{P}_t=\left(\mathcal{P}^1_t,...,\mathcal{P}^{N/p}_t\right)$ of $\{1,...,N\}$ into $N/p$ subsets of size $p>1$, assuming for the sake of simplicity that $N$ is a multiple of $p$, and define
\begin{equation}\label{eq:def_rb_clust}
    \mathcal{C}^i_t=\left\{j\in\{1,..,N\}\text{ s.t. }\exists l\in\{1,...,N/p\}, i,j\in\mathcal{P}^{l}_t\right\}.
\end{equation}
In other words, $\mathcal{C}^i_t$ is the set of indexes that are in the same subset as $i$ at time step $t$, with the convention $i\in\mathcal{C}^i_t$. We now consider the following numerical scheme
\begin{equation}\label{eq:D-RB-IPS}
\tag{D-RB-IPS}
\left\{
\begin{array}{ll}
    Y^{i,\delta,p}_{t+1}=Y^{i,\delta,p}_{t}-\delta \nabla U(Y^{i,\delta,p}_{t})-\frac{\delta}{p-1}\sum_{j\in\mathcal C^i_t\setminus\{i\}}\nabla W(Y^{i,\delta,p}_{t}-Y^{j,\delta,p}_{t})+\sqrt{2\sigma\delta}G^i_t,\\
    G^i_t\ \text{ i.i.d }\ \sim\mathcal{N}(0,1),\ \ \ i\in \{1,...,N\},\ \ \ t\in\mathbb{N},
\end{array}  
\right.
\end{equation}
where for each time step $t$ the partition $\mathcal{P}_t$ is random and each partition has the same probability of occurring. The name \eqref{eq:D-RB-IPS} refers to \textit{Discrete - Random Batch - Interacting Particle System}. The convergence of \eqref{eq:D-RB-IPS} towards \eqref{eq:NL} can be found in \cite{JLL21, JLYZ22, YZ22}.

The idea of using random batches has been shown to be efficient for computing the evolution of large interacting system of quantum particles \cite{GJP21}, of particles with Coulomb interactions in molecular dynamics \cite{JLXZ21}, but also for Markov Chain Monte Carlo \cite{LXZ20}, or for solving PDEs \cite{CJT22, LLT22}. See also references therein.

The starting point of this work is the following observation : the RBM, via the random construction of a partition of $\{1,...,N\}$, artificially adds noise (or temperature) to a system. We thus ask the following question :
\begin{center}
    \textit{Does the critical temperature of (the mean-field limit of) a system of interacting particles admitting a phase transition decrease when considering a version with random batches ? If so, can we quantify it ?}
\end{center} 
To partially answer this question, we focus on two specific types of particle systems for which the mean-field limit admits a phase transition : the first one is the \textit{Curie-Weiss} model and the second one is the system \eqref{eq:IPS} in dimension 1 with attractive and quadratic interaction potential $W$ and the \textit{double well confining potential} $U$.

The nonlinear mean-field limits of both models admit, as we will discuss, a phase transition occurring at a certain critical parameter. We consider a version with random batches of size $p$ of each system, consider the limit as $N\rightarrow\infty$ (with fixed $p$) towards a nonlinear model, and then study the phase transition of said limit.

%
%
%
%

\subsection{The Curie-Weiss model}

\paragraph{The classical system.} The Curie-Weiss model is, and it is the reason we start by studying it, arguably one of the most simple system admitting a phase transition. Consider $N$ spins, given by a configuration $\sigma=(\sigma_1,...,\sigma_N)$, and $\Omega_N=\{-1,1\}^N$ the set of possible configurations for the system. On this system we consider the following Hamiltonian
\begin{equation}\label{eq:CW_def_H}
\forall \sigma\in\Omega_N,\ \ \ H_N(\sigma)=-\frac{1}{2N}\sum_{i,j}\sigma_i\sigma_j.
\end{equation}
Intuitively, each spin will tend to align with the others. It is a mean field model as $H_N$ only depends in reality on the mean magnetization $m_N(\sigma):=\frac{1}{N}\sum_{i=1}^N\sigma_i$, by
\begin{equation*}
    H_N(\sigma)=-\frac{N}{2}m_N(\sigma)^2.
\end{equation*}
The evolution for $(\sigma(n))_{n\geq0}$ in $\Omega_N$ is the following : at each discrete time step, a spin is chosen uniformly among the $N$ possible spins. Let us denote $i$ this spin, and $\sigma'=(\sigma'_1,...,\sigma'_N)$ the configuration such that for all $j\neq i$, $\sigma'_j=\sigma(n)_j$, and $\sigma'_i=-\sigma(n)_i$. We accept $\sigma'$ as the next step of $\sigma(n)$ with probability $\exp\left(-\beta(H_N(\sigma')-H_N(\sigma))_+\right)$ (i.e if the Hamiltonian decreases then with probability 1, otherwise with a positive probability depending on a parameter $\beta$), otherwise the system remains at $\sigma(n)$. \plb{Here we use the notation $x_+=\text{max}(x,0)$.} This parameter $\beta$ is known as the \textit{inverse temperature}. This yields the following transition probabilities for the Markov chain $(\sigma(n))_{n\geq0}$:
\begin{equation*}
p(\sigma,\sigma')=\left\{
\begin{array}{ll}
\frac{1}{N}\exp\left(-\beta(H_N(\sigma')-H_N(\sigma))_+\right)&\text{ if }||\sigma-\sigma'||_1=2\\
0&\text{ if }||\sigma-\sigma'||_1>2\\
1-\sum_{\eta\neq\sigma}p(\sigma,\eta)&\text{ if }\sigma'=\sigma
\end{array}
\right.
\end{equation*}
This dynamics $(\sigma(n))_{n\geq0}$, which is an irreducible and aperiodic Markov chain on a finite state space $\Omega_N$, is reversible with respect to the Gibbs measure
\begin{equation}
\mu_{\beta,N}(\sigma)=\frac{1}{Z_{\beta,N}}\exp(-\beta H_N(\sigma)),
\end{equation}
where $Z_{\beta,N}$ is a normalizing constant. Instead of studying the dynamics of $\sigma$, we look at the mean magnetization $m_N(n)=m_N(\sigma(n))$, which is still a Markov chain. This quantity, at each time step, can only increase or decrease by $\frac{2}{N}$, and the transition probabilities are given by
\begin{equation}\label{eq:CW_def_r}
r(m,m')=\left\{
\begin{array}{ll}
\frac{1-m}{2}\exp\left(-\frac{\beta N}{2}(m^2-m'^2)_+\right)&\text{ if }m'=m+\frac{2}{N}\\
\frac{1+m}{2}\exp\left(-\frac{\beta N}{2}(m^2-m'^2)_+\right)&\text{ if }m'=m-\frac{2}{N}\\
1-r\left(m,m+\frac{2}{N}\right)-r\left(m,m-\frac{2}{N}\right)&\text{ if }m'=m\\
0&\text{ otherwise.}
\end{array}
\right.
\end{equation}
Likewise, this dynamics is reversible with respect to the Gibbs measure
\begin{align*}
\nu_{\beta,N}(m)=\frac{1}{Z_{\beta,N}}\binom{N}{\frac{1+m}{2}N}\exp\left(\frac{\beta Nm^2}{2}\right).
\end{align*}
Many works (see for instance \cite{CK17, EN78, LLP10}, the classical reference that is Chapter~4 of \cite{Ell85} or more recently Chapter~2 of \cite{FV17}) have studied Large Deviation Principles for this system, and have shown that there exists a critical inverse temperature $\beta_c=1$. For the sake of completeness, and because the method will be similar in the case with random batches, we give a proof in Section~\ref{sec:CW_no_minibatch} of the phase transition happening in the following sense : the process $M^{(N)}_t=m_N(\lfloor Nt\rfloor)$ weakly converges to the solution of an ordinary differential equation (ODE). For $\beta>1$, the limit ODE admits three equilibrium states, and for $\beta\leq1$ only one. In both cases, 0 is an equilibrium state, and is stable in the case $\beta\leq1$ and unstable in the case $\beta>1$. \plb{Phase transition for the maximum likelihood estimator of the parameters has also been studied in \cite{CG91}.}

\paragraph{The Curie-Weiss model with random batches.} We then consider the same system, but using the Random Batch Method. At each time step, the chosen spin no longer evolves according to the entire system, but according to a subset of $p$ spins containing the chosen spin.

We thus consider a new evolution for $(\sigma^{p}(n))_{n\geq0}$ in $\Omega_N$, where $\sigma^p$ denotes the new sequence of spin configurations. At each discrete time step, a spin is chosen uniformly among the $N$ possible spins. Let us denote it $i$, and $\sigma'=(\sigma'_1,...,\sigma'_N)$ the configuration such that for all $j\neq i$, $\sigma'_j=\sigma^{p}(n)_j$, and $\sigma'_i=-\sigma^{p}(n)_i$. We then sample a subset of $\{1,...,N\}$ of size $p$  containing $i$, denoted $\mathcal{C}^{i,p}$, uniformly over such subsets, and accept $\sigma'$ as the next step of $\sigma^{p}(n)$ with probability $\exp\left(-\beta(H_{N,p}(\sigma',\mathcal{C}^{i,p})-H_{N,p}(\sigma^{p}(n),\mathcal{C}^{i,p}))_+\right)$, where
\begin{equation}\label{eq:RB_CW_def_H}
    H_{N,p}(\sigma,\mathcal{C}^{i,p})=-\frac{1}{2p}\sum_{j,k\in\mathcal{C}^{i,p}}\sigma_j\sigma_k.
\end{equation}
Likewise, we may study this system in terms of its magnetization, denoted $(m_{N,p}(n))_{n}$, for which we can explicitly write the transition probabilities (see Lemma~\ref{lem:trans_prob}).

This system resembles to some extent the \textit{dilute Curie-Weiss model} \cite{BMP21}, in which the spins interact according to an Erd\H{o}s-R\'enyi random graph with edge probability $\tilde{p}=\frac{p}{N}\in]0,1[$, the main difference being that the "graph", in our case, is modified at each time step and there are exactly $p-1$ spins interacting with a given one.

Studying the Curie-Weiss model with random batches, which is done in Section~\ref{sec:CW_minibatch}, yields the following results.


\begin{theorem}\label{thm:RB-CW}
Let $p\in\mathbb{N}\setminus\{0,1\}$ and $\beta>0$. 
\begin{itemize}
    \item Define
    \begin{align*}
        S^{p,\beta}_1(m)&=\sum_{k=0}^{p-1}\binom{p-1}{k}\left(\frac{1-m}{2}\right)^k\left(\frac{1+m}{2}\right)^{p-1-k}e^{-2\beta\left(\frac{2k+1-p}{p}\right)_+ }\\
        S^{p,\beta}_2(m)&=\sum_{k=0}^{p-1}\binom{p-1}{k}\left(\frac{1-m}{2}\right)^k\left(\frac{1+m}{2}\right)^{p-1-k}e^{-2\beta\left(\frac{p-1-2k}{p}\right)_+ },\\
        f_p(\beta,m)=&\left(S^{p,\beta}_1(m)-S^{p,\beta}_2(m)\right)-m\left(S^{p,\beta}_1(m)+S^{p,\beta}_2(m)\right).
    \end{align*}
    The process $M^{(N,p)}_t=m_{N,p}(\lfloor Nt\rfloor)$, i.e the magnetization rescaled in time, weakly converges as $N\rightarrow\infty$ to the solution of the ODE
    \begin{equation}\label{eq:ODE_RB_CW}
        \frac{d}{dt}m(t)=f_p(\beta,m(t)).
    \end{equation}
    For all $\beta>0$, $0$ is an equilibrium state for the solution of \eqref{eq:ODE_RB_CW}. 
    \item For $p\in\{2,3\}$, $0$ is the unique equilibrium state, and it is stable.
    \item For $p\geq4$, there exists $\beta_{c,p}$ such that for all $\beta>\beta_{c,p}$, the equilibrium state 0 is unstable, and for all $\beta\leq\beta_{c,p}$ it is stable. Furthermore, we have the estimate
    \begin{equation}
        \beta_{c,p}=1+\sqrt{\frac{2}{p\pi}}+o\left(\frac{1}{\sqrt{p}}\right).
    \end{equation}
\end{itemize}
\end{theorem}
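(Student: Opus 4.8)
Here is the route I would take; the three assertions are handled in turn, the first by the density–dependent Markov chain technique already used in Section~\ref{sec:CW_no_minibatch}, the other two by a careful analysis of $\partial_m f_p(\beta,0)$.

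\emph{Step 1 (convergence to the ODE).} Starting from the transition probabilities of the magnetization chain (Lemma~\ref{lem:trans_prob}), one checks that a step of $m_{N,p}$ changes $m$ by $\pm 2/N$ with probability $\tfrac{1\mp m}{2}$ (choice of a $\mp$ spin) times the acceptance probability averaged over the random batch, and that, the batch composition being hypergeometric and converging uniformly in $m$ to $\mathrm{Bin}(p-1,\tfrac{1\mp m}{2})$, these rates converge to $\tfrac{1-m}{2}S^{p,\beta}_1(m)$ and $\tfrac{1+m}{2}S^{p,\beta}_2(m)$. Applying the pre‑limit generator of $M^{(N,p)}$ to a smooth test function $\phi$ and Taylor‑expanding $\phi(m\pm 2/N)-\phi(m)$ gives $N[\cdots]\to f_p(\beta,m)\phi'(m)$ with $f_p(\beta,m)=(1-m)S^{p,\beta}_1(m)-(1+m)S^{p,\beta}_2(m)$, which equals the stated $f_p$. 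Tightness on $D([0,T],[-1,1])$ is immediate ($O(1/N)$ jumps, bounded rate, compact state space), and since $f_p(\beta,\cdot)$ is smooth hence Lipschitz the limiting martingale problem has the unique solution \eqref{eq:ODE_RB_CW}, which identifies the limit. The substitution $k\mapsto p-1-k$ gives $S^{p,\beta}_2(m)=S^{p,\beta}_1(-m)$, hence $f_p(\beta,\cdot)$ is odd and $f_p(\beta,0)=0$: $0$ is always an equilibrium.

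\emph{Step 2 (stability of $0$; $p\le 3$ versus $p\ge 4$).} Oddness reduces local stability of $0$ to the sign of $\partial_m f_p(\beta,0)$. Differentiating the binomial weights at $m=0$ and using $S^{p,\beta}_2(m)=S^{p,\beta}_1(-m)$ one gets
\[
\partial_m f_p(\beta,0)=2\bigl((S^{p,\beta}_1)'(0)-S^{p,\beta}_1(0)\bigr)=2^{2-p}\sum_{k=0}^{p-1}\binom{p-1}{k}(p-2-2k)\,e^{-2\beta\left(\frac{2k+1-p}{p}\right)_+}.
\]
At $\beta=0$ this equals $2^{2-p}\bigl((p-2)2^{p-1}-2(p-1)2^{p-2}\bigr)=-2$, and its $\beta$‑derivative is $>0$ since only the terms with $k>(p-1)/2$ survive and there $p-2-2k<0$ multiplies a negative factor; so $\beta\mapsto\partial_m f_p(\beta,0)$ increases strictly from $-2$ to the limit $L_p:=2^{2-p}\sum_{k\le (p-1)/2}\binom{p-1}{k}(p-2-2k)$. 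For $p\in\{2,3\}$ a direct computation yields the closed forms $f_2(\beta,m)=-2m\,e^{-\beta}$ and $f_3(\beta,m)=-\tfrac m2\bigl((1-m^2)+e^{-4\beta/3}(3+m^2)\bigr)$, both $<0$ on $(0,1]$ for every $\beta$, so $0$ is the unique equilibrium and is stable. For $p\ge 4$ the key point is $L_p>0$: writing $n=p-1$, the identity $\sum_{k\le n/2}\binom nk(n-2k)=\tfrac12\sum_{k=0}^n\binom nk|n-2k|=n\binom{n-1}{\lfloor (n-1)/2\rfloor}$ (De Moivre's mean‑absolute‑deviation formula for $\mathrm{Bin}(n,1/2)$) reduces $L_p>0$ to $n\binom{n-1}{\lfloor(n-1)/2\rfloor}>\sum_{k\le n/2}\binom nk$, which follows from $\binom{2m}{m}\ge 4^m/(2\sqrt m)$ once $n\ge 3$ (treating $n$ even and odd separately, the last step being an elementary inequality). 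By the intermediate value theorem and strict monotonicity this gives a unique $\beta_{c,p}$; for $\beta\ne\beta_{c,p}$ linearization yields the stated (in)stability, and at $\beta=\beta_{c,p}$ one checks that $\partial_m^3 f_p(\beta_{c,p},0)<0$, so $0$ is still stable.

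\emph{Step 3 (the estimate on $\beta_{c,p}$, and the main obstacle).} Let $K\sim\mathrm{Bin}(p-1,1/2)$ and $W=2K-(p-1)$, which is symmetric with mean $0$ and variance $p-1$; the identity $\partial_m f_p(\beta_{c,p},0)=0$ reads $\mathbb E\bigl[(1+W)e^{-2\beta W_+/p}\bigr]=0$. On $\{W\le 0\}$ the exponential is $1$, contributing the exact quantity $\mathbb P(W\le 0)-\mathbb E[W_+]$; on $\{W>0\}$ one uses the normal approximation $W\simeq\sqrt{p-1}\,Z$ and the Gaussian identities $\mathbb E[e^{-aZ}\mathbf 1_{Z>0}]=e^{a^2/2}\Phi(-a)$, $\mathbb E[Ze^{-aZ}\mathbf 1_{Z>0}]=\tfrac1{\sqrt{2\pi}}-ae^{a^2/2}\Phi(-a)$ with $a=2\beta/\sqrt p$ — equivalently a moment expansion keeping the \emph{quadratic} term of the exponential, since $\mathbb E[W_+^3]\asymp p^{3/2}$ makes that term contribute already at order $p^{-1/2}$; crucially one may not simply linearize $e^{-2\beta W_+/p}$, and doing so would even flip the sign of the correction. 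Expanding to order $o(p^{-1/2})$ the equation becomes $(1-\beta)+\tfrac{2\beta(2\beta-1)}{\sqrt{2\pi p}}+O(1/p)=0$; writing $\beta=1+\delta$ gives $\delta=\sqrt{2/(\pi p)}+O(1/p)$, i.e. $\beta_{c,p}=1+\sqrt{2/(\pi p)}+o(1/\sqrt p)$. The main obstacle is exactly this step: the bookkeeping of the second‑order term together with a uniform control of the binomial‑to‑Gaussian error to the required precision (the symmetry of $\mathrm{Bin}(p-1,1/2)$, which annihilates the leading Edgeworth correction, being what keeps this error $O(1/p)$); the combinatorial positivity $L_p>0$ for all $p\ge4$ in Step 2 is the other somewhat delicate point.
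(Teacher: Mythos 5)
Your proposal is correct and follows the same overall strategy as Section~\ref{sec:CW_minibatch}: passing from the transition probabilities of Lemma~\ref{lem:trans_prob} to the limiting generator (the hypergeometric-to-binomial limit giving $\tfrac{1-m}{2}S^{p,\beta}_1(m)$ and $\tfrac{1+m}{2}S^{p,\beta}_2(m)$, hence \eqref{eq:ODE_RB_CW}), explicit computation of $f_2,f_3$ for $p\in\{2,3\}$, monotonicity in $\beta$ of $\partial_m f_p(\beta,0)$ together with its values at $\beta=0$ and $\beta=\infty$ to get a unique $\beta_{c,p}$, and a second-order expansion of $\partial_m f_p(\beta,0)$ to extract $\beta_{c,p}=1+\sqrt{2/(p\pi)}+o(1/\sqrt p)$; your expanded equation $(1-\beta)+\sqrt{2/(\pi p)}\,(2\beta^2-\beta)+o(1/\sqrt p)=0$ is exactly the paper's. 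Two points where you genuinely depart, both to your advantage: for the positivity of the $\beta\to\infty$ limit when $p\ge4$ you give a complete combinatorial argument via De Moivre's mean-absolute-deviation identity for $\mathrm{Bin}(p-1,1/2)$ plus a central-binomial lower bound, whereas the paper only sketches this ("it can easily be shown" for odd $p\ge5$); and for the asymptotics you split at $W=0$ and use exact symmetry on $\{W\le0\}$ together with Gaussian identities (or a quadratic expansion of the exponential) on $\{W>0\}$, instead of the paper's $\cosh/\tanh$ symmetrization combined with the moment-convergence Lemma~\ref{lem:cv_mom} — the two routes require the same input, namely control of $\mathbb{E}|W|$ and $\mathbb{E}|W|^3$ against their Gaussian values, and you correctly identify that linearizing the exponential would miss the order-$p^{-1/2}$ contribution of $\mathbb{E}[W_+^3]$, which is the crux of the computation. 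The only small caveats, shared with the paper, are that plugging $\beta=\beta_{c,p}$ into the expansion tacitly uses uniformity of the error on a compact range of $\beta$ (you at least flag the need for uniform error control), and stability exactly at $\beta=\beta_{c,p}$ requires the higher-order argument you mention ($\partial_m^3 f_p(\beta_{c,p},0)<0$) but do not carry out — the paper does not address it either.
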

This theorem thus gives a first answer to the main question of the article : the RBM does increase the critical inverse temperature of the system (i.e decreases the critical temperature).

%
%
%
%

\subsection{Numerical scheme and double-well potential}

We then go back to the initial motivation concerning numerical schemes for interacting particle systems.

\paragraph{The effective dynamics.} Just like we may consider the nonlinear limit of \eqref{eq:IPS}, we may also consider the limit as $N\rightarrow\infty$ of \eqref{eq:D-RB-IPS}. Define
\begin{equation}\label{eq:D-RB-NL}
\tag{D-RB-NL}
\left\{
\begin{array}{ll}
    \bar{Y}^{\delta, p}_{t+1}=\bar{Y}^{\delta, p}_{t}-\delta \nabla U(\bar{Y}^{\delta, p}_{t})-\frac{\delta}{p-1}\sum_{j=1}^{p-1}\nabla W(\bar{Y}^{\delta, p}_{t}-Y^j)+\sqrt{2\sigma\delta}G_t,\\
    G_t\ \text{ i.i.d }\ \sim\mathcal{N}(0,1),\ \ \  (Y^j)_j\ \text{ i.i.d }\ \sim\bar{\rho}^{\delta,p}_t:=\text{Law}(\bar{Y}^{\delta, p}_{t}).
\end{array}  
\right.
\end{equation}
The name \eqref{eq:D-RB-NL} stands for \textit{Discrete - Random Batch - Nonlinear Limit}. The convergence of \eqref{eq:D-RB-IPS} towards \eqref{eq:D-RB-NL} can be found in \cite{JL22}. The proof relies on a coupling method, noticing that, as $N\rightarrow\infty$, the probability of constructing batches of fixed size $p$ in \eqref{eq:D-RB-IPS} with independent and identically distributed particles goes to 1, thus giving a convergence in total variation distance.

We then, in the spirit of \cite{SS21}, construct a continuous process, parameterized by the timestep and the batch size, which is closer to the numerical scheme \eqref{eq:D-RB-IPS} than the target \eqref{eq:NL}. In the dynamics of \eqref{eq:D-RB-NL}, writing
\[\xi_t = \frac{1}{p-1}\sum_{j=1}^{p-1}\nabla W\left(\bar{Y}^{\delta,p}_t-Y^j\right)\]
 notice that
\begin{align*}
    \mathbb{E}\left(\xi_t \Big| \bar{Y}^{\delta,p}_t \right)=  \nabla W\ast \bar{\rho}^{\delta,p}_t(\bar{Y}^{\delta,p}_t),
\end{align*}
and
\begin{align*}
    \text{Var}\left(\xi_t \Big| \bar{Y}^{\delta,p}_t\right)=&\frac{1}{p-1}\text{Var}_{\bar{\rho}^{\delta,p}_t}\left(\nabla W(\bar{Y}^{\delta,p}_t-\cdot)\Big|\bar{Y}^{\delta,p}_t\right)\\
    =&\frac{1}{p-1}\left((\nabla W)^2\ast\bar{\rho}^{\delta,p}_t(\bar{Y}^{\delta,p}_t)-(\nabla W\ast\bar{\rho}^{\delta,p}_t(\bar{Y}^{\delta,p}_t))^2 \right),
\end{align*}
where the square of a vector has to be understood component-wise. Hence,
\[\bar{Y}^{\delta,p}_t = \bar{Y}^{\delta,p}_0 -\delta \sum_{s=0}^{t-1}  \nabla U(\bar{Y}^{\delta, p}_{s}) -\delta \sum_{s=0}^{t-1}\nabla W\ast \bar{\rho}^{\delta,p}_s(\bar{Y}^{\delta,p}_s)  - \delta M_t + \sqrt{2\sigma\delta} \sum_{s=0}^{t-1} G_s \,,\]
where
\[t\mapsto M_t := \sum_{s=0}^{t-1} \left( \xi_s -  \nabla W\ast \bar{\rho}^{\delta,p}_s(\bar{Y}^{\delta,p}_s)\right) \]
is a martingale. By martingale CLT, we thus expect the numerical scheme \eqref{eq:D-RB-IPS} to be close, for small values of $\delta$,  the following non-linear SDE, that we call the \textit{effective dynamics}:
\begin{equation}\label{eq:Eff}
\tag{Eff}
\left\{
    \begin{array}{l}
     d\bar{X}^{e,\delta,p}_t=-\nabla U(\bar{X}^{e,\delta,p}_t)dt-\nabla W\ast\bar{\rho}^{e,\delta,p}_t (\bar{X}^{e,\delta,p}_t)dt+\left(2\sigma +\frac{\delta}{p-1}\Sigma(\bar{X}^{e,\delta,p}_t,\bar{\rho}^{e,\delta,p}_t)\right)^{1/2}dB_t,  \\
     \bar{\rho}^{e,\delta,p}_t=\text{Law}(\bar{X}^{e,\delta,p}_t),
    \end{array}
\right.
\end{equation}
where we denote $\Sigma(x,\rho)=(\nabla W)^2\ast\rho(x)-(\nabla W\ast\rho(x))^2$. Notice that, although it is a continuous-time process, it depends on the stepsize $\delta$ of the numerical schemes.

Such dynamics are also known as modified equations in various works considering the backward error analysis of SDEs \cite{Sha06, Zyg11}, improving upon a technique that had already provided a better understanding of the numerical methods for ODEs. They have been used in the numerical error analysis of the Stochastic Gradient Langevin Dynamics \cite{VZT16, SS21}. Of course, these references do not consider non-linear SDEs as we do, and obtaining a formal result in our case is out of the scope of the present work. Let us informally and briefly explain the motivation of the effective dynamics (we refer to  \cite{VZT16, SS21} and references within for further details). In the usual stochastic gradient case (which would correspond to \eqref{eq:D-RB-NL} where we assume that the law of $Y^j$ is fixed), denote by $\pi_{\delta,p}$, $\pi$, $\pi_\delta$ and $\pi_{eff}$, respectively, the invariant measures of \eqref{eq:D-RB-NL}, of the continuous-time limit process \eqref{eq:NL},  of its Euler scheme (without Random batches, i.e. \eqref{eq:D-RB-NL} with $p=\infty$) and of \eqref{eq:Eff}. For a fixed observable $f$, from the weak error analysis  on the invariant measure  (see e.g. \cite[Proposition 1]{SS21})), we get that there exists $c_1,c_2 \in\mathbb R$ such that  $\pi_{\delta,p}(f) \simeq    \pi(f) + c_1 \delta + c_2 \delta/ p$, while $\pi_{\delta}(f) \simeq  \pi(f) + c_1 \delta$ and $ \pi_{eff}(f)  \simeq  \pi(f) + c_2 \delta/ p$, where these approximations are all up to a term of order $\delta^2 (1+p^{-3/2})$. In other words, at first order in $\delta$, $c_1$ and $c_2$ respectively accounts for the time discretization and stochastic gradient errors. By studying \eqref{eq:Eff}, at first order, we disregard the error which is purely due to the time  discretization and focus on the contribution of the stochastic gradient approximation. Notice that, for the Euler scheme of the overdamped Langevin diffusion \eqref{eq:D-RB-NL}, except if the variance $\Sigma$ is very large (which corresponds to the case in \cite{VZT16, SS21} which are not concerned with a mean-field scaling)  these two parts of the error are of the same order in $\delta$. However, in practice, second-order schemes for underdamped Langevin or Hamiltonian Monte Carlo  are widely used (as in \cite{SS21,monmarche2022hmc}) and in that case the stochastic gradient contribution is the leading term of the bias (see Remark~\ref{rem:HMC} below). In any cases, the numerical scheme is closer to the effective dynamics than it is to the continuous-time process (as they only differ, at first order, through the pure discretization error), which motivates in the following the analysis of the effective dynamics \eqref{eq:Eff}.

Again, we emphasize that providing a quantitative link between the various processes \eqref{eq:IPS}, \eqref{eq:D-IPS}, \eqref{eq:D-RB-IPS}, \eqref{eq:D-RB-NL}, \eqref{eq:Eff}, and \eqref{eq:NL} would require an entire separate analysis, even though some results are already known. As it would dilute the main message of this work concerning the phase transition of the effective dynamics, we do not address this question here.

\paragraph{The double well confining potential.} We now choose in \eqref{eq:NL} the dimension to be $d=1$ and the potentials
\begin{align}
    U(x)=\frac{x^4}{4}-\frac{x^2}{2},\ \ \ W(x)=L_W\frac{x^2}{2}\ \ \text{ with }L_W>0.\label{eq:def_U_W}
\end{align}
Recall the following result adapted from \cite{tugaut}.


\begin{theorem}[Theorem 2.1 of \cite{tugaut}]\label{thm:Tugaut}
For $U$ and $W$ given by \eqref{eq:def_U_W}, there exists $\sigma_c>0$ such that 
\begin{itemize}
    \item For all $\sigma\geq\sigma_c$, there exists a unique stationary distribution $\mu_{\sigma,0}$ for \eqref{eq:NL}. Furthermore, $\mu_{\sigma,0}$ is symmetric.
    \item For all $\sigma<\sigma_c$, there exist three stationary distributions for \eqref{eq:NL}. One is symmetric, also denoted $\mu_{\sigma,0}$, and the other two, denoted $\mu_{\sigma,+}$ and $\mu_{\sigma,-}$, satisfy $\pm\int xd\mu_{\sigma,\pm}(dx)>0$.
\end{itemize}
By convention, in the case $\sigma\geq\sigma_c$, we may denote $\mu_\sigma=\mu_{\sigma,\pm}=\mu_{\sigma,0}$.
\end{theorem}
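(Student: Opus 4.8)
The plan is to exploit the fact that $W$ in \eqref{eq:def_U_W} is quadratic to reduce the stationary distributions of \eqref{eq:NL} to the solutions of a scalar fixed-point equation, and then to analyse that equation.

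First I would show that any stationary distribution $\rho$ of \eqref{eq:NL} (for the potentials \eqref{eq:def_U_W}) has a smooth, everywhere positive density with finite moments of all orders --- standard, but requiring some care, using that at stationarity the drift is $-\nabla\big(U+W\ast\rho\big)$ with $U+W\ast\rho$ growing like $x^4/4$, since $W\ast\rho(x)=\tfrac{L_W}{2}\big(x^2-2mx\big)+\mathrm{const}$ with $m=\int y\,\rho(dy)$. In dimension one the stationary Fokker--Planck equation $\sigma\rho''+\big(\rho\,(U+W\ast\rho)'\big)'=0$ integrates once, the constant vanishing by decay at $\pm\infty$, so $\rho\propto\exp\!\big(-\tfrac1\sigma(U+W\ast\rho)\big)$. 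With \eqref{eq:def_U_W} this means, up to normalisation,
\[
\rho=\rho_m:=\frac{1}{Z_\sigma(m)}\exp\!\left(-\frac1\sigma\Big(\frac{x^4}{4}+\frac{L_W-1}{2}x^2-L_Wm\,x\Big)\right),\qquad m=\int_{\mathbb R}x\,\rho_m(x)\,dx ,
\]
so that stationary distributions are in bijection with the solutions $m\in\mathbb R$ of $m=F_\sigma(m)$, where $F_\sigma(m):=\int_{\mathbb R}x\,\rho_m(x)\,dx$ (the converse inclusion being immediate by substitution). Since $\rho_{-m}$ is the pushforward of $\rho_m$ by $x\mapsto-x$, the map $F_\sigma$ is odd, so $m=0$ always solves it, giving the symmetric stationary measure $\mu_{\sigma,0}:=\rho_0$.

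Next I would establish the shape of $F_\sigma$. Differentiating under the integral (noting $\partial_m$ amounts to tilting $\rho_m$ by $\tfrac{L_W}{\sigma}x$) gives $F_\sigma'(m)=\tfrac{L_W}{\sigma}\mathrm{Var}_{\rho_m}(X)>0$ and $F_\sigma''(m)=\tfrac{L_W^2}{\sigma^2}\int(x-F_\sigma(m))^3\rho_m(x)\,dx$, while a Laplace estimate at the minimiser $x_m\sim(L_Wm)^{1/3}$ of the exponent yields $F_\sigma(m)\sim(L_Wm)^{1/3}$, hence $F_\sigma(m)=o(m)$, as $m\to+\infty$. The key point --- and the main obstacle --- is that $F_\sigma$ is \emph{strictly concave on $(0,\infty)$}, i.e. the third centred moment of $\rho_m$ is negative for $m>0$. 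Writing $\rho_m\propto e^{-\tilde V_0(x)/\sigma}e^{\lambda x}$ with $\tilde V_0(x)=\tfrac{x^4}{4}+\tfrac{L_W-1}{2}x^2$ and $\lambda=L_Wm/\sigma$, this is exactly the concavity on $[0,\infty)$ of the ``magnetisation'' $\lambda\mapsto\int x\,e^{\lambda x}e^{-\tilde V_0/\sigma}dx\big/\int e^{\lambda x}e^{-\tilde V_0/\sigma}dx$; since $e^{-\tilde V_0/\sigma}$ is, up to a constant, of the form $e^{-c_4x^4-c_2x^2}$ with $c_4=\tfrac{1}{4\sigma}>0$, it follows from the GHS (Griffiths--Hurst--Sherman) inequality for single-site quartic measures, see \cite{Ell85}. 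Alternatively one can prove it by a more computational argument combining integration by parts under $\rho_m$ with the representation $e^{-c_4x^4}=\mathbb E\big[e^{\mathrm i\sqrt{2c_4}\,\mathcal Gx^2}\big]$, $\mathcal G\sim\mathcal N(0,1)$.

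Finally I would count the solutions and pin down $\sigma_c$. The function $G_\sigma:=F_\sigma-\mathrm{id}$ is strictly concave on $(0,\infty)$ with $G_\sigma(0)=0$ and $G_\sigma(m)\to-\infty$, so it has exactly one zero in $(0,\infty)$ when $G_\sigma'(0)=\tfrac{L_W}{\sigma}\mathrm{Var}_{\rho_0}(X)-1>0$ and none when $G_\sigma'(0)\le 0$; combined with oddness, the solution set of $m=F_\sigma(m)$ is either $\{0\}$ or $\{0,m_\sigma,-m_\sigma\}$ for some $m_\sigma>0$. To see the threshold is a single temperature, rescale $x=\sqrt\sigma\,y$: then $\rho_0$ becomes $\hat\rho_\sigma\propto\exp\!\big(-\tfrac\sigma4y^4-\tfrac{L_W-1}{2}y^2\big)$ and $G_\sigma'(0)>0\iff\mathrm{Var}_{\hat\rho_\sigma}(Y)>1/L_W$. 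Now $\sigma\mapsto\mathrm{Var}_{\hat\rho_\sigma}(Y)$ is continuous and strictly decreasing --- indeed $\tfrac{d}{d\sigma}\int y^2\hat\rho_\sigma=-\tfrac14\mathrm{Cov}_{\hat\rho_\sigma}(Y^2,Y^4)<0$, the covariance being positive by the FKG (Chebyshev sum) inequality applied to the nondecreasing functions $u\mapsto u$ and $u\mapsto u^2$ of $u=y^2$ under the symmetric measure $\hat\rho_\sigma$ --- with limit $>1/L_W$ (or $+\infty$) as $\sigma\to0$ and limit $0$ as $\sigma\to+\infty$. Hence there is a unique $\sigma_c>0$ with $\mathrm{Var}_{\hat\rho_{\sigma_c}}(Y)=1/L_W$, and $G_\sigma'(0)>0\iff\sigma<\sigma_c$. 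This gives: for $\sigma\ge\sigma_c$, $\mu_\sigma:=\rho_0$ is the unique (hence symmetric) stationary distribution; for $\sigma<\sigma_c$, there are exactly three, $\mu_{\sigma,0}:=\rho_0$ (symmetric) and $\mu_{\sigma,\pm}:=\rho_{\pm m_\sigma}$, with $\int x\,d\mu_{\sigma,\pm}(x)=\pm m_\sigma$ of the claimed sign. The hard part is the concavity of $F_\sigma$ on $(0,\infty)$ of the third step; the regularity claims of the first step and the Laplace asymptotics are routine but should be spelled out.
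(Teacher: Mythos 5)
The paper does not actually prove this statement: it is imported from Theorem~2.1 of \cite{tugaut}, and only the ingredients of Tugaut's proof that are reused later are recorded (Lemma~\ref{lem:tech_tug}, the characterization \eqref{eq:cdt_crit_tug} of $\sigma_c$). Your argument is a correct, self-contained proof that follows the same overall strategy as Tugaut's: since $W$ is quadratic, a stationary law solves \eqref{eq:def_stat_nl} and is determined by its mean alone, so everything reduces to the scalar self-consistency equation $m=F_\sigma(m)$; your threshold condition $\mathrm{Var}_{\hat\rho_{\sigma_c}}(Y)=1/L_W$ is exactly the paper's \eqref{eq:cdt_crit_tug} (equivalently the critical-variance identity \eqref{eq:val_var_crit}), and your FKG/Chebyshev proof that $\sigma\mapsto\mathrm{Var}_{\hat\rho_\sigma}(Y)$ is strictly decreasing is essentially the paper's Lemma~\ref{lem:d_sigma_d_m_xi}, proved there for another purpose. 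Where you genuinely diverge is in how the solution set is counted: Tugaut (as reflected in Lemma~\ref{lem:tech_tug}) analyzes directly the shape of $\kappa\mapsto\xi(\sigma,\kappa)$ (decreasing for $\sigma\ge\sigma_c$, increasing-then-decreasing for $\sigma<\sigma_c$), whereas you get the same count from concavity of the magnetization $m\mapsto F_\sigma(m)$ on $[0,\infty)$ via the GHS inequality for single-site quartic measures (the Ellis--Monroe--Newman class, $V$ even with $V'$ convex on $[0,\infty)$, which $\tfrac{x^4}{4\sigma}+\tfrac{L_W-1}{2\sigma}x^2$ satisfies even when $L_W<1$, see \cite{Ell85}). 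Your route buys a softer, more conceptual structural lemma at the price of invoking a nontrivial correlation inequality; Tugaut's is more computational but self-contained. Two points to tighten: (i) standard GHS gives only non-strict concavity, so at $\sigma=\sigma_c$ you should exclude a positive fixed point either by a strict form of GHS or by noting that equality in the concavity bound would force $F_\sigma$ to coincide with the identity on an interval, which contradicts real-analyticity of $m\mapsto F_\sigma(m)$ together with $F_\sigma(m)=o(m)$; for $\sigma>\sigma_c$ non-strict concavity already suffices since then $G_\sigma'\le G_\sigma'(0)<0$ on $(0,\infty)$. (ii) The identification of stationary measures of \eqref{eq:NL} with solutions of \eqref{eq:def_stat_nl} (integrating the stationary Fokker--Planck equation, decay of the density, finiteness of the mean) is also taken for granted in the paper, so spelling it out as you propose is indeed required for a fully self-contained proof.
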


Our goal is now to study the stationary distribution(s) for the effective dynamics \eqref{eq:Eff} in the specific case of the double-well potential \eqref{eq:def_U_W}. We wish to understand if, similarly as Theorem~\ref{thm:Tugaut}, there exists a phase transition, and if so compare the critical parameters. We thus prove in Section~\ref{sec:RB_for_IPS} the following theorem.


\begin{theorem}\label{thm:phase_trans_eff}
Let $\sigma_0\in]0,\sigma_c[$ where $\sigma_c$ is defined in Theorem~\ref{thm:Tugaut}. For $U$ and $W$ given by \eqref{eq:def_U_W}, there exists $c_0>0$ such that for all $(\delta,p)$ satisfying $\frac{\delta}{p-1}\leq c_0$, denoting
\begin{equation}\label{eq:def_sigma_eff}
    \sigma_c^{eff}=\sigma_c\left(1-\frac{\delta L_W}{2(p-1)}\right),
\end{equation}
we have the following phase transition for the dynamics \eqref{eq:Eff}
\begin{itemize}
    \item For all $\sigma\geq\sigma_c^{eff}$, there exists a unique stationary distribution $\mu_{\sigma,0}^{\delta,p}$ for \eqref{eq:Eff}. Furthermore, $\mu_{\sigma,0}^{\delta,p}$ is symmetric.
    \item For all $\sigma\in[\sigma_0,\sigma_c^{eff}[$, there exists exactly three stationary distributions for \eqref{eq:Eff}. One is symmetric, also denoted $\mu_{\sigma,0}^{\delta,p}$, and the other two, denoted $\mu_{\sigma,+}^{\delta,p}$ and $\mu_{\sigma,-}^{\delta,p}$, satisfy $\pm\int xd\mu_{\sigma,\pm}^{\delta,p}(x)>0$.
\end{itemize}
\end{theorem}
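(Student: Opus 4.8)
The plan is to use that, for quadratic $W$, the diffusion coefficient of \eqref{eq:Eff} is constant in space, which makes every stationary distribution explicit, and then to reduce the problem to Theorem~\ref{thm:Tugaut} applied to \eqref{eq:NL} at a temperature that is shifted by an amount depending on the stationary distribution itself. First I would compute $\Sigma$: since $\nabla W(x)=L_W x$, for any probability measure $\rho$ on $\mathbb{R}$ with finite second moment and mean $m_\rho$ one has
\[
  \Sigma(x,\rho)=L_W^2\!\int(x-y)^2\,\rho(\mathrm{d}y)-L_W^2(x-m_\rho)^2=L_W^2\,\mathrm{Var}(\rho),
\]
which is independent of $x$. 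Hence, if $\mu$ is a stationary distribution of \eqref{eq:Eff} at parameter $\sigma$, freezing the law equal to $\mu$ turns \eqref{eq:Eff} into a one-dimensional linear diffusion with constant diffusion coefficient $2D_\mu$, where $D_\mu:=\sigma+\tfrac{\delta L_W^2}{2(p-1)}\mathrm{Var}(\mu)$, and with gradient drift $-\big(U(\cdot)+\tfrac{L_W}{2}(\cdot-m_\mu)^2\big)'$, $m_\mu:=\int x\,\mu(\mathrm{d}x)$; since $U+\tfrac{L_W}{2}(\cdot-m_\mu)^2$ is confining, $\mu$ must equal the associated Gibbs measure, which has all moments finite so that $D_\mu\in(0,\infty)$. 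In other words, $\mu$ is stationary for \eqref{eq:Eff} at parameter $\sigma$ if and only if $\mu$ is a stationary distribution of \eqref{eq:NL} at \emph{some} temperature $D$ subject to the compatibility relation $D=\sigma+\tfrac{\delta L_W^2}{2(p-1)}\mathrm{Var}(\mu)$.

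By Theorem~\ref{thm:Tugaut}, at temperature $D$ the stationary distributions of \eqref{eq:NL} are the symmetric $\mu_{D,0}$ (for every $D>0$) together with $\mu_{D,\pm}$ when $D<\sigma_c$; set $v_0(D):=\mathrm{Var}(\mu_{D,0})$ and $v_+(D):=\mathrm{Var}(\mu_{D,\pm})$ (well defined by the $x\mapsto-x$ symmetry). Defining
\[
  \phi_0(D):=D-\tfrac{\delta L_W^2}{2(p-1)}v_0(D)\ \ (D>0),\qquad
  \phi_+(D):=D-\tfrac{\delta L_W^2}{2(p-1)}v_+(D)\ \ (0<D<\sigma_c),
\]
the previous paragraph shows that the symmetric stationary distributions of \eqref{eq:Eff} at parameter $\sigma$ are exactly the $\mu_{D,0}$ with $\phi_0(D)=\sigma$, and the non-symmetric ones are the $\mu_{D,\pm}$ with $\phi_+(D)=\sigma$ and $D<\sigma_c$; the sign of $\int x\,\mathrm{d}\mu_{\sigma,\pm}^{\delta,p}$ is inherited from that of $\mu_{D,\pm}$. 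To pin down the threshold, differentiating the self-consistency map $\Phi_D(m):=\int x\,\mu_{D,m}(\mathrm{d}x)$ (with $\mu_{D,m}\propto e^{-(U+\frac{L_W}{2}(\cdot-m)^2)/D}$) gives $\Phi_D'(m)=\tfrac{L_W}{D}\mathrm{Var}(\mu_{D,m})$, so the bifurcation of \eqref{eq:NL} at $D=\sigma_c$ amounts to $\Phi_{\sigma_c}'(0)=1$, i.e. $L_W v_0(\sigma_c)=\sigma_c$. Since $\mu_{D,\pm}\to\mu_{\sigma_c,0}$ as $D\uparrow\sigma_c$, hence $v_+(D)\to v_0(\sigma_c)$, this yields
\[
  \phi_0(\sigma_c)=\lim_{D\uparrow\sigma_c}\phi_+(D)=\sigma_c-\tfrac{\delta L_W^2}{2(p-1)}v_0(\sigma_c)=\sigma_c\Big(1-\tfrac{\delta L_W}{2(p-1)}\Big)=\sigma_c^{eff}.
\]

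I would then fix $c_0>0$ small enough that, whenever $\tfrac{\delta}{p-1}\le c_0$, one has $\sigma_0\le\sigma_c^{eff}<\sigma_c$ and both $\phi_0,\phi_+$ are strictly increasing on $[\sigma_0,\infty)$ and $[\sigma_0,\sigma_c)$ respectively; this is possible provided $\sup_{D\ge\sigma_0}|v_0'(D)|<\infty$ and $\sup_{\sigma_0\le D<\sigma_c}|v_+'(D)|<\infty$, it then sufficing to take $c_0$ below $2/L_W$, below $2(\sigma_c-\sigma_0)/(L_W\sigma_c)$, and below $2/(L_W^2\sup|v_0'|)$ and $2/(L_W^2\sup|v_+'|)$. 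Because $\phi_\bullet(D)\le D$ for all $D$, no solution of $\phi_\bullet(D)=\sigma$ with $\sigma\ge\sigma_0$ lies below $\sigma_0$; hence $\phi_0$ increasing from $\phi_0(\sigma_0)\le\sigma_0$ to $+\infty$ on $[\sigma_0,\infty)$ shows that for every $\sigma\ge\sigma_0$ the equation $\phi_0(D)=\sigma$ has a unique solution $D_0(\sigma)\ge\sigma_0$, with $D_0(\sigma)\ge\sigma_c\Leftrightarrow\sigma\ge\sigma_c^{eff}$ (as $\phi_0(\sigma_c)=\sigma_c^{eff}$), so there is exactly one symmetric stationary distribution $\mu_{\sigma,0}^{\delta,p}:=\mu_{D_0(\sigma),0}$; and $\phi_+$ increasing from $\phi_+(\sigma_0)\le\sigma_0$ up to $\lim_{D\uparrow\sigma_c}\phi_+(D)=\sigma_c^{eff}$ on $[\sigma_0,\sigma_c)$ shows that $\phi_+(D)=\sigma$ has a solution $D<\sigma_c$ if and only if $\sigma<\sigma_c^{eff}$, and then a unique one, giving the pair $\mu_{\sigma,\pm}^{\delta,p}:=\mu_{D,\pm}$. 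Altogether: for $\sigma\ge\sigma_c^{eff}$ the only stationary distribution of \eqref{eq:Eff} is $\mu_{\sigma,0}^{\delta,p}$, and for $\sigma\in[\sigma_0,\sigma_c^{eff})$ there are exactly three, which is the claim.

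I expect the genuine difficulty to be the uniform $C^1$ bound on $v_+$ up to the bifurcation temperature $\sigma_c$. The non-symmetric mean $m_+(D)$ has an infinite derivative at $\sigma_c$ (pitchfork scaling $m_+(D)\asymp\sqrt{\sigma_c-D}$), so the last factor in $v_+'(D)=\partial_D\mathrm{Var}(\mu_{D,m})\big|_{m=m_+(D)}+\partial_m\mathrm{Var}(\mu_{D,m})\big|_{m=m_+(D)}\,m_+'(D)$ blows up; the resolution is that $m\mapsto\mathrm{Var}(\mu_{D,m})$ is even, so $\partial_m\mathrm{Var}(\mu_{D,m})\big|_{m=m_+(D)}=O\big(m_+(D)\big)=O(\sqrt{\sigma_c-D})$ exactly offsets $m_+'(D)=O(1/\sqrt{\sigma_c-D})$, keeping $v_+'$ bounded near $\sigma_c$. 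Away from $\sigma_c$ one instead uses the non-degeneracy of the non-symmetric roots of $m=\Phi_D(m)$ together with the implicit function theorem to get a smooth, bounded dependence of $m_+(D)$, hence of $v_+(D)$, on $D$; the corresponding control of $v_0$ on $[\sigma_0,\infty)$ is easier, smoothness together with $v_0'(D)\to0$ as $D\to\infty$ (by the scaling $x=D^{1/4}u$) sufficing.
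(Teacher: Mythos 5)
Your proposal is correct and follows essentially the same route as the paper: the constant-in-$x$ diffusion coefficient $\Sigma=L_W^2\mathrm{Var}(\rho)$ gives the equivalence between stationary states of \eqref{eq:Eff} and stationary states of \eqref{eq:NL} at a variance-shifted temperature (Lemma~\ref{lem:equiv_stat_NL_Eff}), your maps $\phi_0,\phi_+$ are the paper's $g_{eff,0},g_{eff,\pm}$ made strictly increasing for $\frac{\delta}{p-1}$ small via Lipschitz bounds on the variance (Lemma~\ref{lem:res_NL}), the value $\sigma_c^{eff}$ comes from the critical-variance identity $L_W\kappa_2(\mu_{\sigma_c})=\sigma_c$, and your resolution of the delicate $C^1$ bound on $\mathrm{Var}(\mu_{D,\pm})$ near $\sigma_c$ (evenness of $m\mapsto\mathrm{Var}(\mu_{D,m})$ offsetting the $\sqrt{\sigma_c-D}$ blow-up of $m_+'$) is exactly the argument of Appendix~\ref{app:proof_lip}. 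The only ingredient you assert rather than prove is the pitchfork non-degeneracy behind $m_+(D)\asymp\sqrt{\sigma_c-D}$, i.e. $\partial_\kappa^3 f_1(\sigma_c,0)<0$, which the paper establishes in Lemma~\ref{lem:der_3} (partly via a numerical check) and then exploits in Lemma~\ref{lem:m1_racine}.
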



\begin{remark}
Let us quickly discuss the form of \eqref{eq:def_sigma_eff}. In the specific case of \eqref{eq:def_U_W}, as discussed in Section~\ref{sec:RB_for_IPS}, one has $\Sigma(\bar{X}^{e,\delta,p}_t,\bar{\rho}^{e,\delta,p}_t)=L_W^2\text{Var}(\bar{\rho}^{e,\delta,p}_t)$. To insist on the dependence on $\sigma$ rather than $(\delta,p)$, let us denote, only in this remark, $\Sigma_\sigma:=\Sigma(\bar{X}^{e,\delta,p}_t,\bar{\rho}^{e,\delta,p}_t)$. 

We will show, but this can be intuitively understood at this stage, that any stationary distribution for \eqref{eq:NL} is a stationary distribution for \eqref{eq:Eff} although for a smaller value of $\sigma$. We thus have to study the stationary distribution at the critical value $\sigma_c$.

As proved in Lemma~\ref{lem:res_NL}, the variance of the stationary distribution for \eqref{eq:NL} at the critical value is $\text{Var}(\mu_{\sigma_c,0})=\frac{\sigma_c}{L_W}$. 

By considering the diffusion term in \eqref{eq:Eff} and ensuring $2\sigma_c=2\sigma_c^{eff}+\frac{\delta}{p-1}\Sigma_{\sigma_c}$, we then obtain \eqref{eq:def_sigma_eff}.
\end{remark}

\begin{remark}\label{rem:HMC}
Another consequence of the fact that  $\Sigma(x,\rho)$ does not depend on $x$ is that a stationary solution of \eqref{eq:Eff} is also the first marginal of a stationary solution of the corresponding effective dynamics for the kinetic Langevin diffusion (with the second marginal, i.e. the distribution of velocities at equilibrium, being the standard Gaussian distribution, and the distribution in the phase space being the product of these two marginals,  see e.g. \cite{guillin2021uniform}). It means that Theorem~\ref{thm:phase_trans_eff} also applies to the kinetic case. Moreover, in this case, in practice, second-order splitting schemes are used, which means that the discretization error is negligible with respect to the stochastic gradient error, and thus the effective dynamics captures the leading term of the numerical errror, see also \cite{monmarche2022hmc} on this topic.
\end{remark}

\noindent Let us sum up the organization of the article.
\begin{itemize}
    \item The Curie-Weiss model is studied in Section~\ref{sec:tout_CW}. We start by recalling the analysis of the phase transition for the classical Curie-Weiss model in Section~\ref{sec:CW_no_minibatch} since the same ideas will be used afterwards. The study of the  Curie-Weiss model with random batches and the proof of Theorem~\ref{thm:RB-CW} are then done in Section~\ref{sec:CW_minibatch},
    \item In Section~\ref{sec:RB_for_IPS} we study the Random Batch Method for interacting particle systems. More specifically we prove  Theorem~\ref{thm:phase_trans_eff} in the specific case of the double-well potential,
    \item Finally, in Appendix~\ref{app:technical} we gather some technical lemmas, and in Appendix~\ref{app:proofs_NL} we prove some results on \eqref{eq:NL} for the double-well potential used in Section~\ref{sec:RB_for_IPS}.
\end{itemize}

%
%
%
%

\section*{Notation}

\paragraph{For the Curie-Weiss model, with and without random batches:}
\begin{itemize}
    \item $\Omega_N=\{-1,...,1\}^N$ : the set of possible configurations,
    \item $\sigma(n)=(\sigma_1(n),...,\sigma_N(n))$ : the spin configuration at time step $n$,
    \item $\beta$ : the inverse temperature,
    \item $\beta_c$ : the critical inverse temperature,
    \item $H_N$: the Hamiltonian of the Curie-Weiss model given in \eqref{eq:CW_def_H},
    \item $m_N(n)=\frac{1}{N}\sum_{i=1}^N\sigma_i(n)$ : the magnetization at time step $n$,
    \item $r(\sigma,\sigma')$ : transition probability for the Markov chain $(m_N(n))_n$, given in \eqref{eq:CW_def_r}.
    \item $\sigma^{p}(n)=(\sigma^{p}_1(n),...,\sigma^{p}_N(n))$ : the spin configuration of the system with random batches of size $p$ at time step $n$,
    \item $H_{N,p}$ : the Hamiltonian for the system with random batches of size $p$, given in \eqref{eq:RB_CW_def_H},
    \item $m_{N,p}(n)=\frac{1}{N}\sum_{i=1}^N\sigma^p_i(n)$ : the magnetization at time step $n$ for the system with random batches of size $p$,
    \item $r_p(m,m')$ : transition probability for the Markov chain $(m_{N,p}(n))_n$, given in Lemma~\ref{lem:trans_prob}.
    \item $\beta_{c,p}$ : the critical inverse temperature for the system with random batches of size $p$.
\end{itemize}
\paragraph{For the Random Batch Method for interacting particle system :}
\begin{itemize}
    \item $U,W$ : two twice continuously differentiable functions, respectively the confining potential and the interacting potential (see \eqref{eq:IPS}),
    \item $\sigma>0$ : a diffusion coefficient (see \eqref{eq:IPS}),
    \item $(X^i_t)_{i\in\{1,...,N\}}$ : the solution at time $t\in\mathbb{R}^+$ of the interacting particle system \eqref{eq:IPS},
    \item $\bar{X}_t$, $\bar{\rho}_t$ : the solution at time $t\in\mathbb{R}^+$ of the nonlinear limit \eqref{eq:NL} and its law,
    \item $\delta>0$ : a timestep used in the various numerical schemes,
    \item $(X^{i,\delta}_t)_{i\in\{1,...,N\}}$ : the solution at time step $t\in\mathbb{N}$ of the Euler-Maruyama numerical scheme \eqref{eq:D-IPS},
    \item $p\in\mathbb{N}\setminus\{0,1\}$ : the batch size,
    \item $\mathcal{P}_t$ : the partition of $\{1,...,N\}$ at time step $t$ into subsets of size $p$,
    \item $\mathcal{C}^i_t$ : the cluster containing index $i$ at time step $t$ (see \eqref{eq:def_rb_clust}),
    \item $(Y^{i,\delta,p}_t)_{i\in\{1,...,N\}}$ : the solution at time step $t\in\mathbb{N}$ of the numerical scheme with random batches \eqref{eq:D-RB-IPS},
    \item $\bar{Y}^{\delta,p}_t$ : the solution  at time step $t\in\mathbb{N}$ of \eqref{eq:D-RB-NL}, the nonlinear limit of \eqref{eq:D-RB-IPS} as $N\rightarrow\infty$,
    \item $\bar{X}^{e,\delta,p}_t$, $\bar{\rho}^{e,\delta,p}_t$ : the effective dynamics \eqref{eq:Eff} at time $t\in\mathbb{R}^+$ and its law,
    \item $\mu_{\sigma,*}$ for $*\in\{0,\pm\}$, $\sigma_c$ : stationary distributions and critical parameter of \eqref{eq:NL} given in Theorem~\ref{thm:Tugaut},
    \item $\mu^{\delta,p}_{\sigma,*}$ for $*\in\{0,\pm\}$, $\sigma^{eff}_c$ : stationary distributions and critical parameter of \eqref{eq:Eff} given in Theorem~\ref{thm:phase_trans_eff}.
\end{itemize}

%
%
%
%

\section{Understanding the problem on the Curie-Weiss model}\label{sec:tout_CW}

In order to get a better grasp on the phenomenon we focus on, we begin by studying arguably one of the simplest model admitting a phase transition : the Curie-Weiss model. In Section~\ref{sec:CW_no_minibatch}, we show how we obtain the value of the critical parameter in the classical case. Then, in Section~\ref{sec:CW_minibatch}, we follow the same steps to compute the new critical inverse temperature in the case with random batches.

%
%
%
%

\subsection{...without the Random Batch Method}\label{sec:CW_no_minibatch}

In order to study this critical inverse temperature, we choose to look at the limit of the dynamics with time step $\frac{1}{N}$ as $N$ goes to infinity. $(m_N(n))_n$ is a discrete-time Markov chain with transition operator $U^{(N)}$ given by $U^{(N)}=\left(U^{(N)}_{i,j}\right)_{0\leq i,j\leq N}$ where $U^{(N)}_{i,j}=r\left(-1+\frac{2i}{N},-1+\frac{2j}{N}\right)$. We denote $A_N=N\left(U^{(N)}-I\right)$. We have, for all continuously differentiable functions $f$,
\begin{align*}
A_Nf(m)=&N\frac{1-m}{2}e^{-\beta N\left(\frac{m^2}{2}-\frac{\left(m+\frac{2}{N}\right)^2}{2}\right)_+}\left(f\left(m+\frac{2}{N}\right)-f(m)\right)\\
&+N\frac{1+m}{2}e^{-\beta N\left(\frac{m^2}{2}-\frac{\left(m-\frac{2}{N}\right)^2}{2}\right)_+}\left(f\left(m-\frac{2}{N}\right)-f(m)\right).
\end{align*}
We thus get
\begin{align*}
A_Nf(m)=&N\frac{1-m}{2}e^{-2\beta\left(-m-\frac{1}{N}\right)_+}\left(f\left(m+\frac{2}{N}\right)-f(m)\right)\\
&+N\frac{1+m}{2}e^{-2\beta\left(m+\frac{1}{N}\right)_+}\left(f\left(m-\frac{2}{N}\right)-f(m)\right)\\
=&N\frac{1-m}{2}e^{-2\beta\left(-m-\frac{1}{N}\right)_+}\left(\frac{2}{N}f'(m)+O\left(\frac{1}{N^2}\right)\right)\\
&+N\frac{1+m}{2}e^{-2\beta\left(m+\frac{1}{N}\right)_+}\left(-\frac{2}{N}f'(m)+O\left(\frac{1}{N^2}\right)\right)\\
=&(1-m)e^{-2\beta\left(-m-\frac{1}{N}\right)_+}f'(m)-(1+m)e^{-2\beta\left(m+\frac{1}{N}\right)_+}f'(m)+O\left(\frac{1}{N}\right)\\
&\xrightarrow[N \to \infty]{}  f'(m)\left((1-m)e^{-2\beta(-m)_+}-(1+m)e^{-2\beta m_+}\right),
\end{align*}
which finally yields
\begin{align*}
A_Nf(m)\xrightarrow[N \to \infty]{} 2f'(m)e^{-\beta|m|}\left(\sinh(m\beta)-m\cosh(m\beta)\right).
\end{align*}
By  \cite[Theorem 17.28]{Kal97}, the process $M^{(N)}_t=m_N(\lfloor Nt\rfloor)$ weakly converges to the solution of 
\begin{align*}
\frac{d}{dt}m(t)=2e^{-\beta|m(t)|}\left(\sinh(\beta m(t))-m(t)\cosh(\beta m(t) )\right).
\end{align*}
Denote $f(\beta,m)=2e^{-\beta|m|}\left(\sinh(\beta m)-m\cosh(\beta m )\right)$. We have 
\begin{align*}
f(\beta,m)=0\ \ \ \iff\ \ \ \tanh(\beta m)=m.
\end{align*}
For $\beta>1$, the equation $f(\beta,m)=0$ admits three solutions, and for $\beta\leq1$ only one. Notice that for all $\beta>0$, $f(\beta,0)=0$ : 0 is thus always an equilibrium state for the magnetization. Furthermore
\begin{align*}
\forall \beta>0, \forall m\neq0, \partial_m f(\beta,m)=&-2\beta\text{sign}(m)e^{-\beta|m|}\left(\sinh(\beta m)-m\cosh(\beta m )\right)\\
+&2e^{-\beta|m|}\left((\beta-1)\cosh(\beta m )-\beta m\sinh(\beta m)\right),
\end{align*}
and, extending by continuity, we have $\partial_m f(\beta,0)=2(\beta-1).$
Therefore, for $\beta>1$, $0$ is unstable as $\partial_m f(\beta,0)>0$, and for $\beta\leq1$ it is stable.

Hence a critical inverse temperature $\beta_c=1$, above which there are two stable equilibrium states, and under which there is only one.

%
%
%
%

\subsection{...with the Random Batch Method}\label{sec:CW_minibatch}

To follow the same steps in the case with random batches, we need to compute the transition operator before finding its limit.

%
%
%
%

\subsubsection{Transition probabilities}

Let us start by giving explicit values for the transitions probabilities for the magnetization using the Random Batch Method. The proof, which relies on combinatorics arguments, is double-checked via numerical simulations in Figure~\ref{Fig:verif_proba}.


\begin{lemma}\label{lem:trans_prob}
In a system of size $N$, the transition probabilities for the magnetization with random batches of size $p$ are given by
\begin{equation}\label{eq:val_proba_trans_p}
r_p(m,m')=\left\{
\begin{array}{ll}
&\frac{1-m}{2}\binom{N-1}{p-1}^{-1}\sum_{k=0}^{p-1}\binom{\left(\frac{1-m}{2}\right)N-1}{k}\binom{\left(\frac{1+m}{2}\right)N}{p-1-k}e^{-2\beta\left(\frac{2k+1-p}{p}\right)_+}\\
&\hspace{2cm}\text{ if }m'=m+\frac{2}{N}\\
&\frac{1+m}{2}\binom{N-1}{p-1}^{-1}\sum_{k=0}^{p-1}\binom{\left(\frac{1-m}{2}\right)N}{k}\binom{\left(\frac{1+m}{2}\right)N-1}{p-1-k}e^{-2\beta\left(\frac{p-1-2k}{p}\right)_+ }\\
&\hspace{2cm}\text{ if }m'=m-\frac{2}{N}\\
&1-r_p\left(m,m+\frac{2}{N}\right)-r_p\left(m,m-\frac{2}{N}\right)\\
&\hspace{2cm}\text{ if }m'=m\\
&0\hspace{1.8cm}\text{ otherwise.}
\end{array}
\right.
\end{equation}
\end{lemma}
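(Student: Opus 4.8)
The plan is to compute $r_p(m, m')$ by conditioning on the composition of the random batch $\mathcal{C}^{i,p}$ that is drawn after the flip candidate is selected. Fix a configuration $\sigma$ with magnetization $m$, so there are $N_+ = \frac{1+m}{2}N$ spins equal to $+1$ and $N_- = \frac{1-m}{2}N$ spins equal to $-1$. The transition $m \to m + \frac{2}{N}$ occurs exactly when the uniformly chosen spin $i$ is one of the $N_-$ spins with value $-1$ (probability $\frac{1-m}{2}$), and then the proposed flip $\sigma'$ (which has $\sigma'_i = +1$) is accepted. Symmetrically, $m \to m - \frac{2}{N}$ requires $i$ to have value $+1$. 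The remaining case $m' = m$ absorbs all rejections and is forced by normalization, and transitions of other sizes are impossible since a single spin flip changes $m$ by exactly $\pm\frac{2}{N}$. So the content of the lemma is entirely in the two acceptance probabilities.

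Next I would evaluate the acceptance probability in the case $i$ has spin $-1$. Conditionally on $i$, the batch $\mathcal{C}^{i,p}$ is a uniformly random $(p-1)$-subset of the other $N-1$ indices, hence the number $k$ of spins equal to $-1$ among those $p-1$ other members is hypergeometric: $\mathbb{P}(k \text{ such spins}) = \binom{N-1}{p-1}^{-1}\binom{N_- - 1}{k}\binom{N_+}{p-1-k}$ (we subtract $1$ from $N_-$ because $i$ itself is a $-1$ spin already in the batch). Given such a batch, before the flip the batch contains $k+1$ spins equal to $-1$ and $p-1-k$ spins equal to $+1$, so its batch-magnetization is $m_{\text{old}} = \frac{(p-1-k) - (k+1)}{p} = \frac{p-2-2k}{p}$; after flipping $i$ it contains $k$ spins $-1$ and $p-k$ spins $+1$, giving $m_{\text{new}} = \frac{(p-k)-k}{p} = \frac{p-2k}{p}$. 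Using $H_{N,p}(\sigma,\mathcal{C}^{i,p}) = -\frac{p}{2}m_{\text{batch}}^2$, the relevant exponent is $-\beta\bigl(H_{N,p}(\sigma',\mathcal{C}) - H_{N,p}(\sigma,\mathcal{C})\bigr)_+ = -\beta\bigl(\frac{p}{2}(m_{\text{old}}^2 - m_{\text{new}}^2)\bigr)_+$, and $\frac{p}{2}(m_{\text{old}}^2 - m_{\text{new}}^2) = \frac{p}{2}\cdot\frac{(p-2-2k)^2 - (p-2k)^2}{p^2} = \frac{(p-2-2k)-(p-2k)}{2p}\cdot\bigl((p-2-2k)+(p-2k)\bigr) = \frac{-2}{2p}(2p-2-4k) = \frac{-(2p-2-4k)}{p} = \frac{2(2k+1-p)}{p}$, so the acceptance weight is $e^{-2\beta\left(\frac{2k+1-p}{p}\right)_+}$, matching the stated formula. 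Summing over $k$ from $0$ to $p-1$ weighted by the hypergeometric probabilities, then multiplying by $\mathbb{P}(i \text{ has spin }-1) = \frac{1-m}{2}$, gives exactly the first line of \eqref{eq:val_proba_trans_p}. The case $m' = m - \frac{2}{N}$ is the mirror computation, swapping the roles of $+1$ and $-1$: now $k$ counts the $-1$ spins among the $p-1$ companions drawn from a pool with $N_-$ minus-spins and $N_+ - 1$ plus-spins, the flip destroys a $+1$ spin, and the algebra yields the exponent $e^{-2\beta\left(\frac{p-1-2k}{p}\right)_+}$.

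The main obstacle — really the only place where care is needed — is bookkeeping the hypergeometric weights correctly, specifically remembering that $i$ is already in the batch and has a known spin value, so one draws $p-1$ further indices from $N-1$ and the "available" count of like-signed spins is reduced by one. A secondary point to check is that the $O(\cdot)$-free exact identities $\frac{p}{2}(m_{\text{old}}^2 - m_{\text{new}}^2) = \frac{2(2k+1-p)}{p}$ and its mirror hold for all $k \in \{0,\dots,p-1\}$, including the boundary values, and that the positive-part truncation is applied to the correct sign of the energy difference (acceptance with probability $1$ when the flip lowers the batch Hamiltonian). Once these are verified, assembling the three lines of \eqref{eq:val_proba_trans_p} is immediate, and the numerical cross-check in Figure~\ref{Fig:verif_proba} serves as a safeguard against a combinatorial slip.
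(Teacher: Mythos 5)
Your proposal is correct and follows essentially the same route as the paper: condition on the value of the chosen spin, use the hypergeometric count $\binom{N-1}{p-1}^{-1}\binom{N_{-}-1}{k}\binom{N_{+}}{p-1-k}$ for the batch composition, and compute the batch-Hamiltonian difference, which gives the exponents $e^{-2\beta(\frac{2k+1-p}{p})_+}$ and $e^{-2\beta(\frac{p-1-2k}{p})_+}$. The only cosmetic difference is that you evaluate the energy change via the batch magnetization identity $H_{N,p}=-\frac{p}{2}m_{\text{batch}}^2$, whereas the paper expands the double sum directly; the two computations coincide.
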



\begin{proof}
Notice that, for a given $m$, the number of positive spins is given by $\frac{1+m}{2}N$ and the number of negative spins by $\frac{1-m}{2}N$.
\paragraph{Going right.} Let us calculate the probability of going from $m$ to $m+\frac{2}{N}$. To do so, the chosen spin, denoted $i$, must be of value $-1$, and this will happen with probability $\frac{1-m}{2}$. Then, depending on the cluster $\mathcal{C}$ to which spin $i$ belongs, switching the spin from $-1$ to $+1$ happens with probability
\begin{align*}
\mathbb{P}(\sigma^p_i(n+1)=1&|\sigma^p_i(n)=-1,\mathcal{C})=\exp\left(-\beta\left(-\frac{1}{2p}\sum_{j,l\in\mathcal{C}}\sigma'_j\sigma'_l+\frac{1}{2p}\sum_{j,l\in\mathcal{C}}\sigma^p_j(n)\sigma^p_l(n)\right)_+\right),
\end{align*}
where $\sigma'$ denotes the configuration such that for all $j\neq i$, $\sigma'_j=\sigma^p_j(n)$, and ${\sigma'_i=-\sigma^p_i(n)}$. We have
\begin{align*}
-\frac{1}{2p}\sum_{j,l\in\mathcal{C}}\sigma'_j\sigma'_l+&\frac{1}{2p}\sum_{j,l\in\mathcal{C}}\sigma^p_j(n)\sigma^p_l(n)\\
=&-\frac{1}{2p}\left(\sum_{j,l\in\mathcal{C}, j\neq i,l\neq i}\sigma'_j\sigma'_l-\sum_{j,l\in\mathcal{C}, j\neq i,l\neq i}\sigma^p_j(n)\sigma^p_l(n)+2\sum_{j\in\mathcal{C},j\neq i}\sigma'_j\sigma'_i\right.\\
&\left.\hspace{2cm}-2\sum_{j\in\mathcal{C},j\neq i}\sigma^p_j(n)\sigma^p_i(n)+(\sigma'_i)^2-(\sigma^p_i(n))^2\right)\\
=&-\frac{1}{2p}\left(-2\sigma^p_i(n)\sum_{j\in\mathcal{C},j\neq i}\sigma^p_j(n)-2\sigma^p_i(n)\sum_{j\in\mathcal{C},j\neq i}\sigma^p_j(n)\right)\\
=&\frac{2}{p}\sigma^p_i(n)\sum_{j\in\mathcal{C},j\neq i}\sigma^p_j(n).
\end{align*}
We classify the possible clusters containing $i$ based on the number of negative spins. The number of clusters containing $i$ and $k$ other negative spins is ${\binom{\frac{1-m}{2}N-1}{k}\binom{\frac{1+m}{2}N}{p-1-k}}$ (choosing $k$ spins among the $\frac{1-m}{2}N-1$ negative spins that are not $i$, then the $p-1-k$ spins that remain to construct cluster $\mathcal{C}$ among the positive spins). For $k$ negative spins in cluster $\mathcal{C}$ (without counting $i$), we have
\begin{align*}
\sum_{j\in\mathcal{C},j\neq i}\sigma^p_j(n)=\sum_{j\in\mathcal{C},j\neq i,\sigma^p_j(n)=1}1-\sum_{j\in\mathcal{C},j\neq i, \sigma^p_j(n)=-1}1=p-1-k-k,
\end{align*}
and thus, since $\sigma^p_i(n)=-1$
\begin{align*}
-\frac{1}{2p}\sum_{j,l\in\mathcal{C}}\sigma'_j\sigma'_l+\frac{1}{2p}\sum_{j,l\in\mathcal{C}}\sigma^p_j(n)\sigma^p_l(n)=&2\frac{2k+1-p}{p}
\end{align*}
The total number of possible choices for $\mathcal{C}$ is $\binom{N-1}{p-1}$ (choosing the $(p-1)$ spins that are not $i$). Hence
\begin{align*}
r_p&\left(m,m+\frac{2}{N}\right)=\frac{1-m}{2}\frac{1}{\binom{N-1}{p-1}}\sum_{k=0}^{p-1}\binom{\left(\frac{1-m}{2}\right)N-1}{k}\binom{\left(\frac{1+m}{2}\right)N}{p-1-k}e^{-2\beta\left(\frac{2k+1-p}{p}\right)_+ }
\end{align*}
\paragraph{Going left.}Similar calculations yield the probability of going left : the probability of choosing a spin of value $+1$ is $\frac{1+m}{2}$, then we classify the possible clusters containing this spin based on the number of negative spins.
\end{proof}

\begin{figure}
\centering
\includegraphics[width=\linewidth,,height=0.75\linewidth]{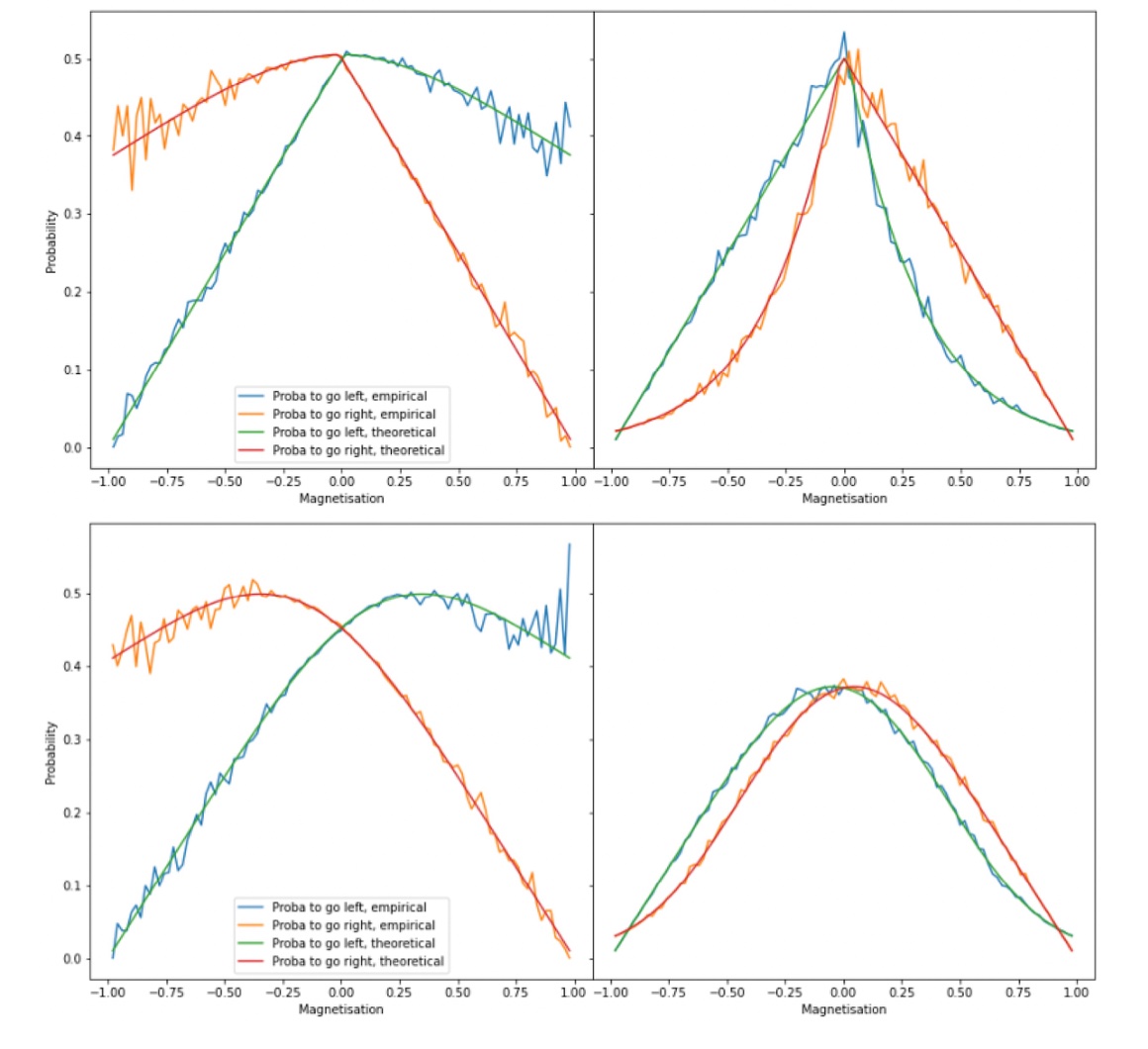}
\caption{Comparison of theoretical and empirical transition probabilities, for $N=100$. The theoretical values are those given in Lemma~\ref{lem:trans_prob}. To numerically compute the empirical transition probabilities, for each initial magnetization in $\{-1,-1+\frac{2}{N},...,1-\frac{2}{N},1\}$, 10 processes are simulated during 1000 timesteps, and we consider the proportion of times the processes go left or right. \textbf{Top : } without random batches. \textbf{Bottom : } with random batches of size $p=10$. \textbf{Left :} for $\beta=0.5$. \textbf{Right :} for $\beta=2$.}
\label{Fig:verif_proba}
\end{figure}

\begin{figure}
\centering
\includegraphics[width=\linewidth]{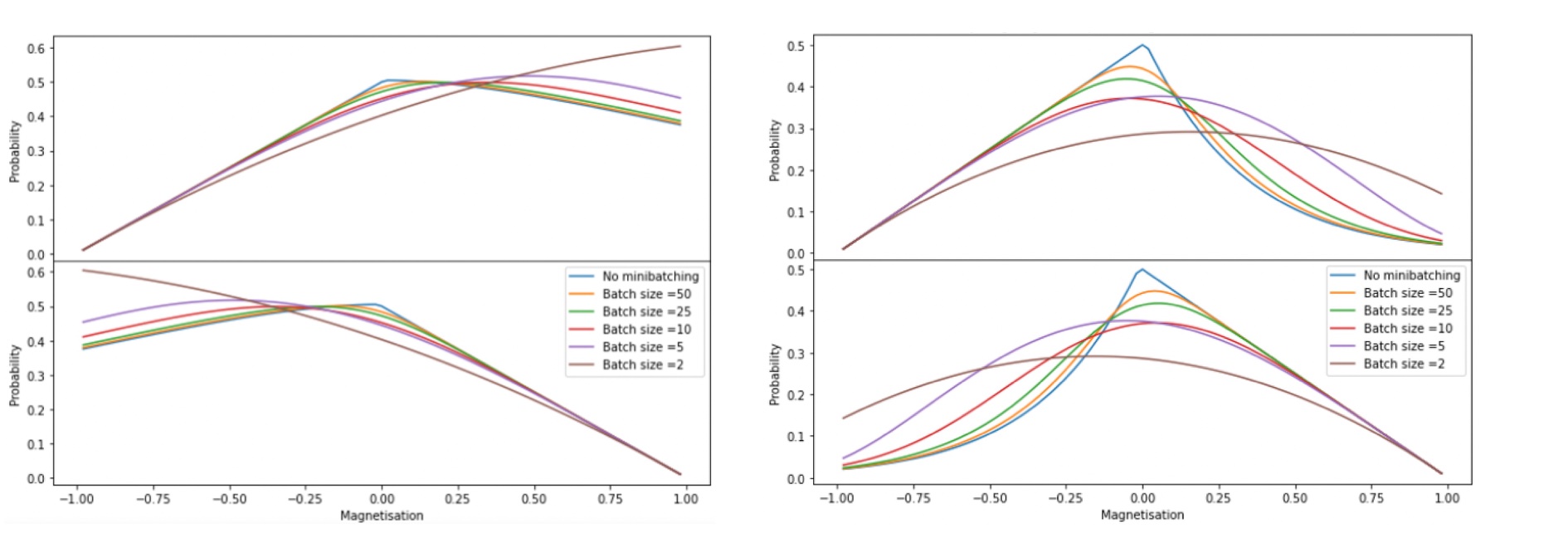}
\caption{Comparison of transition probabilities depending on batch size, for $N=100$. The values given are from Lemma~\ref{lem:trans_prob}. \textbf{Top : } probability of going left. \textbf{Bottom : } probability of going right. \textbf{Left :} for $\beta=0.5$. \textbf{Right :} for $\beta=2$.}
\label{Fig:comp_proba_batch}
\end{figure}


\begin{remark}
The values given in \eqref{eq:val_proba_trans_p} are consistent in the case $p=N$. Observe for instance that the only nonzero term in the sum defining $r_N\left(m,m+\frac{2}{N}\right)$ is obtained for $k=\frac{1-m}{2}N-1$. Thus
\begin{align*}
    r_N\left(m,m+\frac{2}{N}\right)=\frac{1-m}{2}e^{-2\beta\left(-m-\frac{1}{N}\right)_+}=r\left(m,m+\frac{2}{N}\right),
\end{align*}
where the value of $r$ is given in \eqref{eq:CW_def_r}.
\end{remark}


\begin{remark}
We observe how the transition probabilities evolve with the parameter $p$ in Figure~\ref{Fig:comp_proba_batch}. Furthermore, the values given in \eqref{eq:val_proba_trans_p} allow us to define, on the state space $\{-1,1+\frac{2}{N},...,1-\frac{2}{N},1\}$, a transition matrix for the magnetization. The latter is an irreducible and aperiodic Markov chain on a finite state space, and thus admits a unique invariant measure. We can numerically obtain it by iterating the transition matrix (see Figure~\ref{Fig:comp_gibbs})
\end{remark}

\begin{figure}
\hspace*{-1.2cm}
\includegraphics[width=1.17\linewidth,height= \linewidth, left]{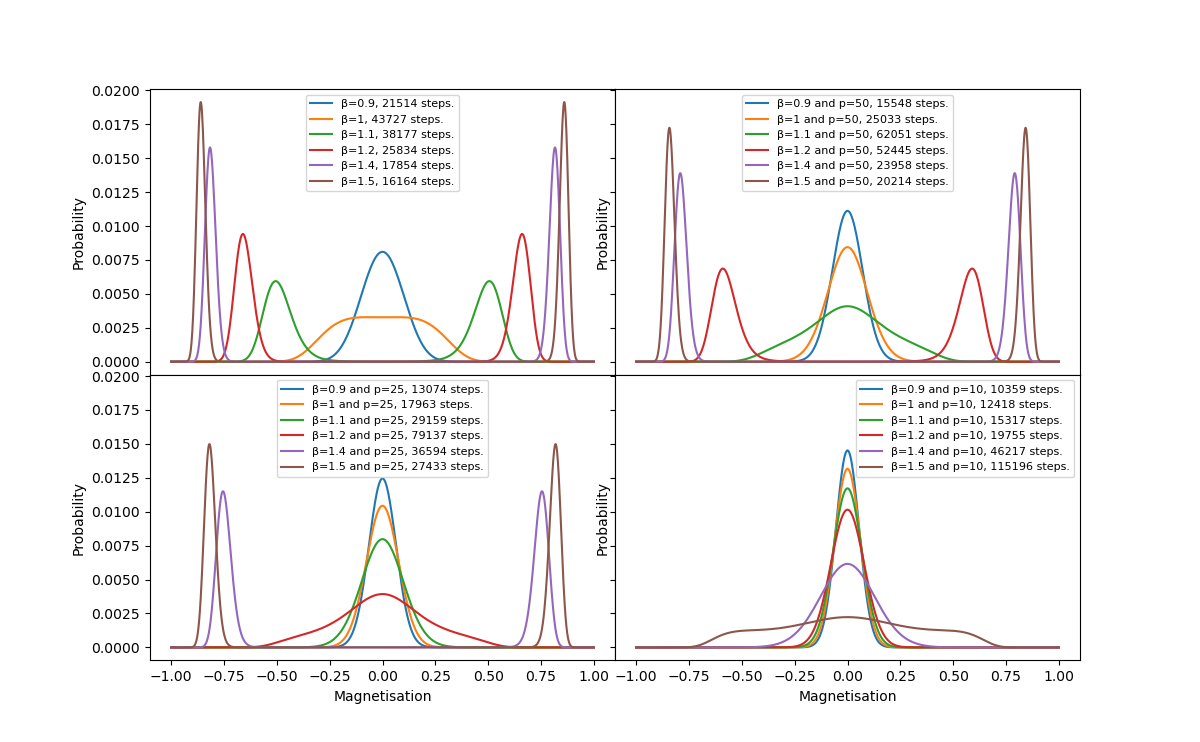}
\vspace*{-1cm}
\caption{Numerical observation of the invariant distribution for the Curie-Weiss model with $N$ spins. Starting from the uniform distribution for the magnetization, we iterate the transition matrix (given by Lemma~\ref{lem:trans_prob}) until the $L^1$ distance between two consecutive iterations is less than a threshold $N\epsilon$, with $N=1000$, $\epsilon=10^{-9}$ and various values for $\beta$. We indicate the number of iterations (or steps) needed before convergence. \textbf{Top {left :}} with no random batches.  \textbf{Top {right :}} with $p=50$. \textbf{Bottom {left :}} with $p=25$. \textbf{Bottom {right :}} with $p=10$.} 
\label{Fig:comp_gibbs}
\end{figure}

%
%
%
%

\subsubsection{Study of the critical parameter}

We now wish to show how adding random batches artificially increases the temperature of the system, thus decreasing the critical temperature (or, equivalently, increasing the critical inverse temperature).

\paragraph{Limit ODE.} Let us, like previously, find the limit as $N$ goes to infinity of the dynamics of $\left(m_{N,p}(n)\right)_n$ with time step $\frac{1}{N}$. This discrete-time Markov chain admits a transition operator $U^{(N,p)}$ given by $$U^{(N,p)}=\left(U^{(N,p)}_{i,j}\right)_{0\leq i,j\leq N}\ \ \ \text{ where  }\ \ \ {U^{(N,p)}_{i,j}=r_p\left(-1+\frac{2i}{N},-1+\frac{2j}{N}\right)}.$$ We denote $A^{(p)}_N=N(U^{(N,p)}-I)$ and have, for all continuously differentiable functions $f$,
\begin{align*}
A^{(p)}_Nf(m)=&Nr_p\left(m,m+\frac{2}{N}\right)\left(f(m+\frac{2}{N})-f(m)\right)+Nr_p\left(m,m-\frac{2}{N}\right)\left(f(m-\frac{2}{N})-f(m)\right)\\
=&r_p\left(m,m+\frac{2}{N}\right)\left(2f'(m)+O\left(\frac{1}{N}\right)\right)-r_p\left(m,m-\frac{2}{N}\right)\left(2f'(m)+O\left(\frac{1}{N}\right)\right).
\end{align*}
We have, by standard computations
\begin{align*}
r_p\left(m,m+\frac{2}{N}\right)&=\frac{1-m}{2}\binom{N-1}{p-1}^{-1}\sum_{k=0}^{p-1}\binom{\left(\frac{1-m}{2}\right)N-1}{k}\binom{\left(\frac{1+m}{2}\right)N}{p-1-k}e^{-2\beta\left(\frac{2k+1-p}{p}\right)_+}\\
&\xrightarrow[N \to \infty]{}\frac{1-m}{2}\sum_{k=0}^{p-1}\binom{p-1}{k}\left(\frac{1-m}{2}\right)^k\left(\frac{1+m}{2}\right)^{p-1-k}e^{-2\beta\left(\frac{2k+1-p}{p}\right)_+},
\end{align*}
and likewise
\begin{align*}
r_p\left(m,m-\frac{2}{N}\right)\xrightarrow[N \to \infty]{}\frac{1+m}{2}\sum_{k=0}^{p-1}\binom{p-1}{k}\left(\frac{1-m}{2}\right)^k\left(\frac{1+m}{2}\right)^{p-1-k}e^{-2\beta\left(\frac{p-1-2k}{p}\right)_+}.
\end{align*}
Hence
\begin{align*}
A^{(p)}_Nf(m)\xrightarrow[N \to \infty]{}A^{(p)}f(m),
\end{align*}
where
\begin{align*}
A^{(p)}f(m)&=f'(m)\left(S^{p,\beta}_1(m)-S^{p,\beta}_2(m)\right)-mf'(m)\left(S^{p,\beta}_1(m)+S^{p,\beta}_2(m)\right),\\
S^{p,\beta}_1(m)&=\sum_{k=0}^{p-1}\binom{p-1}{k}\left(\frac{1-m}{2}\right)^k\left(\frac{1+m}{2}\right)^{p-1-k}e^{-2\beta\left(\frac{2k+1-p}{p}\right)_+ }\\
S^{p,\beta}_2(m)&=\sum_{k=0}^{p-1}\binom{p-1}{k}\left(\frac{1-m}{2}\right)^k\left(\frac{1+m}{2}\right)^{p-1-k}e^{-2\beta\left(\frac{p-1-2k}{p}\right)_+ }.
\end{align*}


\begin{remark}\label{rem:CW_lim_indep}
Notice that 
\[S^{p,\beta}_1(m)=\mathbb{E}\left(e^{-2\beta\left(\frac{2X_{m,p}+1-p}{p}\right)_+ }\right)\,,\quad  S^{p,\beta}_2(m)=\mathbb{E}\left(e^{-2\beta\left(\frac{p-1-2X_{m,p}}{p}\right)_+ }\right)\,,\]
where $X_{m,p}$ is a random variable following a binomial distribution of parameters $p-1$ and $\frac{1-m}{2}$. Intuitively, for an infinite number of spins, the dynamics of the system relies on the construction of a cluster of size $p$ (containing the chosen spin that may change), which is done by independently taking the remaining $p-1$ spins from an infinite pool containing a proportion  $\frac{1-m}{2}$ of negative spins. 
\end{remark}

Denoting $f_p(\beta,m)=\left(S^{p,\beta}_1(m)-S^{p,\beta}_2(m)\right)-m\left(S^{p,\beta}_1(m)+S^{p,\beta}_2(m)\right)$, by \cite[Theorem 17.28]{Kal97}, the process $M^{(N,p)}_t=m_{N,p}(\lfloor Nt\rfloor)$ weakly converges to the solution of 
\begin{align*}
\frac{d}{dt}m(t)=f_p(\beta,m(t)).
\end{align*}

\paragraph{The cases $p=2$ and $p=3$.} We may directly compute
\begin{align*}
    S^{2,\beta}_1(m)=&\frac{1+m}{2}+\frac{1-m}{2}e^{-\beta},\ \ \ \ \ \  
    S^{2,\beta}_2(m)=\frac{1+m}{2}e^{-\beta}+\frac{1-m}{2},\\
    S^{3,\beta}_1(m)=&\left(\frac{1+m}{2}\right)^2+2\left(\frac{1+m}{2}\right)\left(\frac{1-m}{2}\right)+\left(\frac{1-m}{2}\right)^2e^{-\frac{4\beta}{3}},\\
    S^{3,\beta}_2(m)=&\left(\frac{1+m}{2}\right)^2e^{-\frac{4\beta}{3}}+2\left(\frac{1+m}{2}\right)\left(\frac{1-m}{2}\right)+\left(\frac{1-m}{2}\right)^2,
\end{align*}
which yield
\begin{align*}
    f_2(\beta,m)=&m(1-e^{-\beta})-m(1+e^{-\beta})=-2me^{-\beta},\\
    f_3(\beta,m)=&\left(\left(\frac{1+m}{2}\right)^2-\left(\frac{1-m}{2}\right)^2\right)\left(1-e^{-\frac{4\beta}{3}}\right)\\
    &\hspace{3cm}-m\left(\left(\left(\frac{1+m}{2}\right)^2+\left(\frac{1-m}{2}\right)^2\right)\left(1+e^{-\frac{4\beta}{3}}\right)+(1+m)(1-m)\right)\\
    =&m\left(1-e^{-\frac{4\beta}{3}}\right)-m\left(\frac{(1+m^2)}{2}\left(1+e^{-\frac{4\beta}{3}}\right)+1-m^2\right)\\
    =&-\frac{m}{2}\left(1+3e^{-\frac{4\beta}{3}}\right)+\frac{m^3}{2}\left(1-e^{-\frac{4\beta}{3}}\right).
\end{align*}
For $p=2$ we thus have, $f_2(\beta,m)=0\ \iff\ m=0$, and furthermore notice that $\partial_m f_2(\beta,0)<0,$ which means that 0 is the unique equilibrium state, and it is stable. For $p=3$,
\begin{align*}
    f_3(\beta,m)=0\ \ \ \iff\ \ \ m=0\ \ \ \text{ or }\ \ \ m=\pm\sqrt{\frac{1+3e^{-\frac{4\beta}{3}}}{1-e^{-\frac{4\beta}{3}}}}.
\end{align*}
However, for all $\beta>0$ we have $\sqrt{\frac{1+3e^{-\frac{4\beta}{3}}}{1-e^{-\frac{4\beta}{3}}}}>1$, as well as $\partial_m f_3(\beta,0)=-\frac{1+3e^{-\frac{4\beta}{3}}}{2}<0$. The point 0 is thus the unique equilibrium state, and it is stable.

\begin{figure}
\includegraphics[width=\linewidth,height= 0.7 \linewidth]{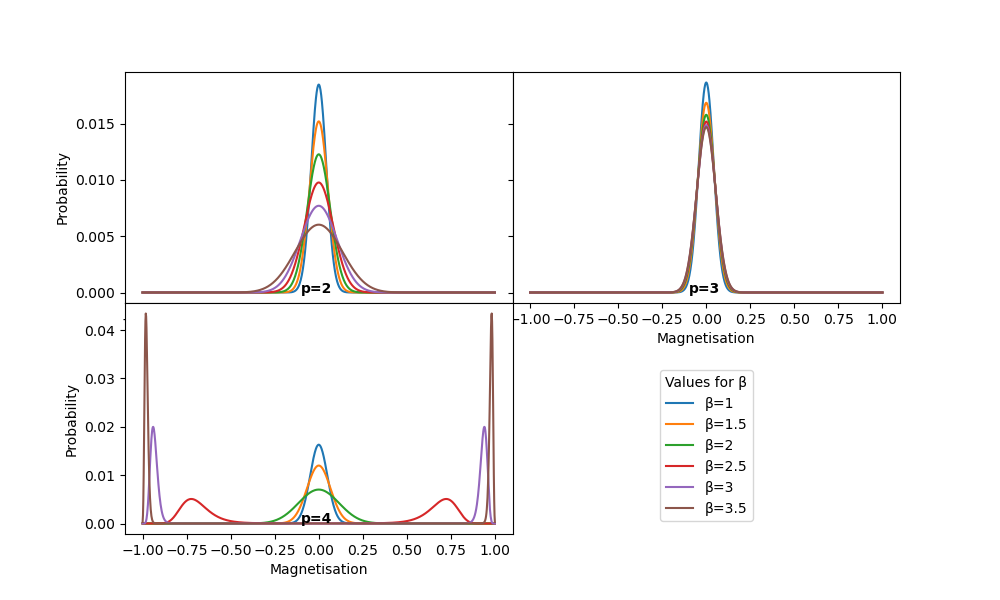}
\vspace*{-1cm}
\caption{Numerical observation of the invariant distribution for the Curie-Weiss model with $N$ spins in the cases $p=2$ (Top right), $p=3$ (Top left) and $p=4$ (Bottom left). Starting from the uniform distribution for the magnetization, we iterate the transition matrix (given in Lemma~\ref{lem:trans_prob}) until the $L^1$ distance between two consecutive iterations is less than a threshold $N\epsilon$, with $N=1000$, $\epsilon=10^{-9}$ and various values for $\beta$.} 
\label{Fig:gibbs_cas_part}
\end{figure}

We may observe this phenomenon in Figure~\ref{Fig:gibbs_cas_part}, in which we compare the cases $p=2$ and $p=3$ with $p=4$.

\paragraph{Existence of a phase transition for $p\geq4$.} First notice that
\begin{align*}
f_p(\beta,0)=&\sum_{k=0}^{p-1}\binom{p-1}{k}\left(\frac{1}{2}\right)^{p-1}e^{-2\beta\left(\frac{2k+1-p}{p}\right)_+}-\sum_{k=0}^{p-1}\binom{p-1}{k}\left(\frac{1}{2}\right)^{p-1}e^{-2\beta\left(\frac{p-1-2k}{p}\right)_+}\\
=&0\ \ \ \text{ by change of variables }\ \ \ k'=p-1-k.
\end{align*}
Thus $m=0$ is for all $\beta>0$ an equilibrium state. The remaining questions, in order to prove Theorem~\ref{thm:RB-CW}, are
\begin{itemize}
    \item is there $\beta_{c,p}>0$ such that for all $\beta<\beta_{c,p}$ we have $\partial_\kappa f(\beta,0)<0$ (in which case $m=0$ is stable) and such that for all $\beta>\beta_{c,p}$ we have $\partial_mf(\beta,0)>0$ (in which case $m=0$ is unstable) ?
    \item do we have $\beta_{c,p}>1$ (in which case the critical temperature has indeed decreased when compared to the case without random batches) ? 
    \item can we give an estimate of $\beta_{c,p}$ ?
\end{itemize}
To answer the first question, we may calculate
\begin{align*}
S'^{p,\beta}_1(m)&=-\sum_{k=0}^{p-1}\frac{k}{2}\binom{p-1}{k}\left(\frac{1-m}{2}\right)^{k-1}\left(\frac{1+m}{2}\right)^{p-1-k}e^{-2\beta\left(\frac{2k+1-p}{p}\right)_+}\\
&+\sum_{k=0}^{p-1}\frac{p-1-k}{2}\binom{p-1}{k}\left(\frac{1-m}{2}\right)^{k}\left(\frac{1+m}{2}\right)^{p-2-k}e^{-2\beta\left(\frac{2k+1-p}{p}\right)_+ }
\end{align*}
and 
\begin{align*}
S'^{p,\beta}_2(m)&=-\sum_{k=0}^{p-1}\frac{k}{2}\binom{p-1}{k}\left(\frac{1-m}{2}\right)^{k-1}\left(\frac{1+m}{2}\right)^{p-1-k}e^{-2\beta\left(\frac{p-1-2k}{p}\right)_+}\\
&+\sum_{k=0}^{p-1}\frac{p-1-k}{2}\binom{p-1}{k}\left(\frac{1-m}{2}\right)^{k}\left(\frac{1+m}{2}\right)^{p-2-k}e^{-2\beta\left(\frac{p-1-2k}{p}\right)_+ },
\end{align*}
which yields
\begin{align*}
\partial_mf_p(\beta,m)=&\left(S'^{p,\beta}_1(m)-S'^{p,\beta}_2(m)\right)-\left(S^{p,\beta}_1(m)+S^{p,\beta}_2(m)\right)-m\left(S'^{p,\beta}_1(m)+S'^{p,\beta}_2(m)\right).
\end{align*}
We thus have
\begin{align*}
\partial_mf_p(\beta,0)&=2S'^{p,\beta}_1(0)-2S^{p,\beta}_1(0)=2\left(\frac{1}{2}\right)^{p-1}\sum_{k=0}^{p-1}(p-2-2k)\binom{p-1}{k}e^{-2\beta\left(\frac{2k+1-p}{p}\right)_+}.
\end{align*}
First, notice
\begin{align*}
\partial_\beta(\partial_mf_p(\beta,0))
=&-4\left(\frac{1}{2}\right)^{p-1}\sum_{k=0}^{p-1}(p-2-2k)\left(\frac{2k+1-p}{p}\right)_+\binom{p-1}{k}e^{-2\beta\left(\frac{2k+1-p}{p}\right)_+}>0.
\end{align*}
The function $\beta\mapsto\partial_mf_p(\beta,0)$ is therefore an increasing function, which furthermore satisfies \linebreak${\partial_mf_p(0,0)<0}$ and ${\lim_{\beta\to\infty}\partial_mf_p(\beta,0)>0}$, hence a unique critical parameter $\beta_{c,p}>0$.


\begin{remark}
We use the assumption $p\geq4$ in order to prove $\lim_{\beta\to\infty}\partial_mf_p(\beta,0)>0$. Indeed
\begin{align*}
    \lim_{\beta\to\infty}\partial_mf_p(\beta,0)=2\left(\frac{1}{2}\right)^{p-1}\sum_{k=0}^{p-1}(p-2-2k)\binom{p-1}{k}\mathds{1}_{k\leq\frac{p-1}{2}}.
\end{align*}
If $p$ is even, all the terms in the sum are nonnegative, and if $p\geq4$, at least one term is positive. If $p$ is odd, one term is negative, and if $p\geq5$ it can easily be shown that it is compensated by the positive terms.
\end{remark}

\paragraph{Estimation of the critical parameter.} Denoting $X_p$ a random variable following a binomial distribution of parameters $p-1$ and $\frac{1}{2}$, we have
\begin{align*}
\partial_mf_p(\beta,0)=2\mathbb{E}\left((p-2-2X_p)e^{-2\beta\left(\frac{2X_p+1-p}{p}\right)_+ }\right):=g_{p}(\beta).
\end{align*}
We are thus looking for the unique $\beta_{c,p}>0$ such that $g_{p}(\beta_{c,p})=0$.

Let $Y_p=2\frac{X_p}{p}-\frac{p-1}{p}$. We have
\begin{equation}\label{eq:def_g_MC}
g_{p}(\beta)=\mathbb{E}\left(2(-pY_p-1)e^{-2\beta (Y_p)_+}\right).
\end{equation}
Since $X_p$ and $p-1-X_p$ have the same law, $Y_p$ has the same law as $2\frac{p-1-X_p}{p}-\frac{p-1}{p}=\frac{p-1}{p}-2\frac{X_p}{p}=-Y_p$. Thus
\begin{align*}
g_{p}(\beta)=&-\mathbb{E}\left(pY_pe^{-2\beta (Y_p)_+}\right)-\mathbb{E}\left(p(-Y_p)e^{-2\beta (-Y_p)_+}\right)-\mathbb{E}\left(e^{-2\beta (Y_p)_+}\right)-\mathbb{E}\left(e^{-2\beta (-Y_p)_+}\right)\\
=&-\mathbb{E}\left(pY_p\left(e^{-2\beta (Y_p)_+}-e^{-2\beta (-Y_p)_+}\right)\right)-\mathbb{E}\left(e^{-2\beta (Y_p)_+}+e^{-2\beta (-Y_p)_+}\right)\\
=&\mathbb{E}\left(2pY_pe^{-\beta|Y_p|}\sinh(\beta Y_p)\right)-\mathbb{E}\left(2e^{-\beta|Y_p|}\cosh(\beta Y_p)\right)\\
=&2\mathbb{E}\left(\cosh(\beta Y_p)e^{-\beta|Y_p|}\left(pY_p\tanh(\beta Y_p)-1\right)\right).
\end{align*}
As this is an increasing function in $\beta$, in order to prove that $\beta_{c,p}>1$, it is sufficient to prove that $g_{p}(1)<0$. The Law of Large Number and the Central Limit Theorem yield
\begin{align*}
Y_p\xrightarrow[p \to \infty]{a.s}0\ \ \ \text{ and }\ \ \ \frac{p}{\sqrt{p-1}}Y_p\xrightarrow[p \to \infty]{law}\mathcal{N}(0,1).
\end{align*}
We have
\begin{align*}
g_{p}(\beta)=&2\mathbb{E}\left(\left(1+\frac{\beta^2Y_p^2}{2}+o(Y_p^2)\right)\left(1-\beta|Y_p|+\frac{\beta^2Y_p^2}{2}+o(Y_p^2)\right)\right.\\
&\left.\hspace{5cm}\times\left(pY_p\left(\beta Y_p-\frac{\beta^3Y_p^3}{3}+o(Y_p^3)\right)-1\right)\right)\\
=&2\mathbb{E}\left(\left(\frac{p-1}{p}\beta \frac{p^2}{p-1}Y_p^2-1\right)-\beta|Y_p|\left(\frac{p-1}{p}\beta \frac{p^2}{p-1}Y_p^2-1\right)+O(Y_p^2)+O(pY_p^4)\right)\\
=&2\left(\beta\frac{p-1}{p}\mathbb{E}\left( \frac{p^2}{p-1}Y_p^2\right)-1-\frac{(p-1)^{3/2}}{p^2}\left(\beta^2\mathbb{E}\left(\left|\frac{p}{\sqrt{p-1}}Y_p\right|^3\right)-\beta\mathbb{E}\left(\left|\frac{p}{\sqrt{p-1}}Y_p\right|\right)\right)\right.\\
&\left.\hspace{2cm}+\frac{1}{p}\mathbb{E}\left(O(pY_p^2)+O(p^2Y_p^4)\right)\right)\\
=&2\left(\beta\frac{p-1}{p}\mathbb{E}\left( Z^2\right)-1-\frac{(p-1)^{3/2}}{p^2}\left(\beta^2\left(\mathbb{E}\left|Z\right|^3+o\left(1\right)\right)-\beta\left(\mathbb{E}\left(\left|Z\right|\right)+o(1)\right)\right)+O\left(\frac{1}{p}\right)\right)\\
=&2(\beta-1)-\frac{2}{\sqrt{p}}\sqrt{\frac{2}{\pi}}\left(2\beta^2-\beta\right)+o\left(\frac{1}{\sqrt{p}}\right),
\end{align*}
where for this last equality, we use Lemma~\ref{lem:cv_mom} and the fact that, for $Z\sim\mathcal{N}(0,1)$, $\mathbb{E}|Z|=\sqrt{\frac{2}{\pi}}$ and $\mathbb{E}(|Z|^3)=2\sqrt{\frac{2}{\pi}}$.

In the end, we obtain, again, the fact that $g_p(1)\xrightarrow[p \to \infty]{}0$ (hence the correct critical parameter at the limit) and the fact that, at least for $p$ sufficiently large, $g_p(1)<0$. For smaller values of $p$, we rely on numerical simulations to verify $g_p(1)<0$ (See Figure~\ref{Fig:Approx_gp1_et_gpbc}).  Let us find an approximation of $\beta_{c,p}$ by using the fact that $g_{p}(\beta_{c,p})=0$. We have
\begin{align*}
    2\sqrt{\frac{2}{p\pi}}\beta_{c,p}^2-\left(1+\sqrt{\frac{2}{p\pi}}\right)\beta_{c,p}+1+o\left(\frac{1}{\sqrt{p}}\right)=0,
\end{align*}
i.e
\begin{align*}
    \beta_{c,p,\pm}=&\frac{1}{4}\sqrt{\frac{p\pi}{2}}\left(\left(1+\sqrt{\frac{2}{p\pi}}\right)\pm\left(\left(1+\sqrt{\frac{2}{p\pi}}\right)^2-8\sqrt{\frac{2}{p\pi}}\left(1+o\left(\frac{1}{\sqrt{p}}\right)\right)\right)^{1/2}\right)\\
    =&\frac{1}{4}\sqrt{\frac{p\pi}{2}}\left(1+\sqrt{\frac{2}{p\pi}}\pm\left(1+\frac{2}{p\pi}-6\sqrt{\frac{2}{p\pi}}+o\left(\frac{1}{p}\right)\right)^{1/2}\right)\\
    =&\frac{1}{4}\sqrt{\frac{p\pi}{2}}\left(1+\sqrt{\frac{2}{p\pi}}\pm\left(1+\frac{1}{p\pi}-3\sqrt{\frac{2}{p\pi}}-\frac{9}{p\pi}+o\left(\frac{1}{p}\right)\right)\right),
\end{align*}
thus
\begin{align*}
    \beta_{c,p}=&\frac{1}{4}\sqrt{\frac{p\pi}{2}}\left(1+\sqrt{\frac{2}{p\pi}}-\left(1-3\sqrt{\frac{2}{p\pi}}-\frac{8}{p\pi}+o\left(\frac{1}{p}\right)\right)\right)\\
    =&\frac{1}{4}\sqrt{\frac{p\pi}{2}}\left(4\sqrt{\frac{2}{p\pi}}+\frac{8}{p\pi}+o\left(\frac{1}{p}\right)\right)\\
    =&1+\sqrt{\frac{2}{p\pi}}+o\left(\frac{1}{\sqrt{p}}\right).
\end{align*}
We have thus proved Theorem~\ref{thm:RB-CW}.

\begin{figure}
\centering
\includegraphics[width=0.7\linewidth]{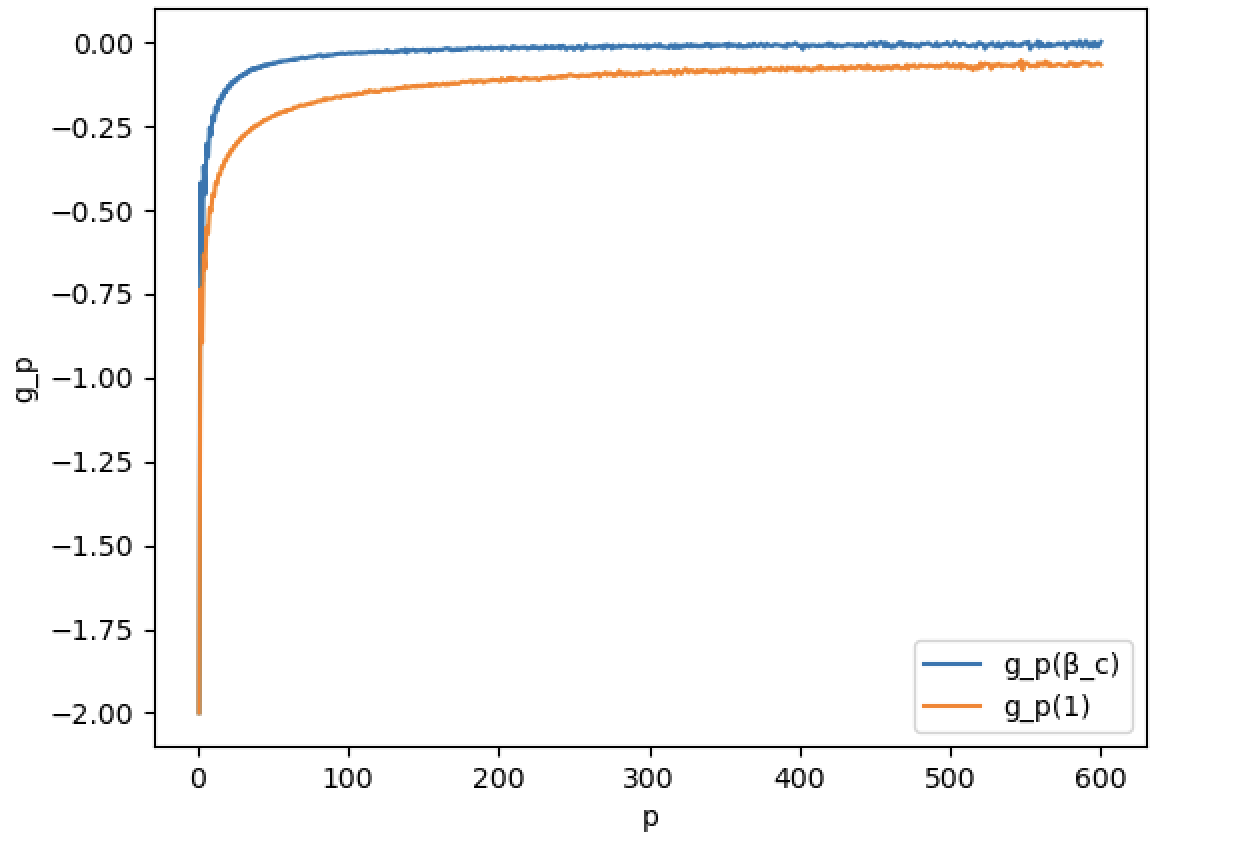}
\caption{Numerical values for $g_p(1)$ and $g_p\left(\tilde \beta_c\right)$, with $\tilde \beta_c =  1+\sqrt{\frac{2}{p\pi}}$. The value of $g_p$ given in \eqref{eq:def_g_MC} is computed via Monte-Carlo approximation using $M=10^8$ samples for $Y_p$.}
\label{Fig:Approx_gp1_et_gpbc}
\end{figure}

%
%
%
%

\section{Random Batch Method for interacting particle systems and stationary distribution(s)}\label{sec:RB_for_IPS}

We now turn our attention to the study of \eqref{eq:D-RB-IPS} for a given batch size $p\in\mathbb{N}\setminus\{0,1\}$. In the specific case of $U$ and $W$ given in \eqref{eq:def_U_W}, we study the phase transition for \eqref{eq:Eff} and prove Theorem~\ref{thm:phase_trans_eff}. The stationary distributions of \eqref{eq:NL}, provided there exists one, are defined by the solutions of
\begin{equation}\label{eq:def_stat_nl}
    \mu_\sigma(dx)=\frac{\exp\left(-\frac{1}{\sigma}\left(U(x)+W\ast\mu_\sigma(x)\right)\right)}{\int \exp\left(-\frac{1}{\sigma}\left(U(y)+W\ast\mu_\sigma(y)\right)\right)dy}dx.
\end{equation}
We consider the case of linear interactions in a double well potential in dimension one, i.e $U$ and $W$ given in \eqref{eq:def_U_W}, which in particular implies
\begin{align*}
    \Sigma(x,\rho)=&(\nabla W)^2\ast\rho(x)-(\nabla W\ast\rho(x))^2
    =L_W^2\left(\int y^2\rho(dy)-\left(\int y\rho(dy)\right)^2\right)
    =L_W^2\text{Var}(\rho).
\end{align*}
Denote, for a measure $\mu$, 
\begin{align*}
    \kappa_1(\mu)=\int_\mathbb{R}x\mu(dx)\ \ \ \text{ and }\ \ \ \kappa_2(\mu)=\int_\mathbb{R}(x-\kappa_1(\mu))^2\mu(dx).
\end{align*}
The stationary distributions of \eqref{eq:Eff}, provided there exist one, are thus similarly defined by the solutions of
\begin{equation}\label{eq:def_stat_eff}
    \mu^{\delta,p}_\sigma(dx)=\frac{\exp\left(-\frac{2}{2\sigma+\frac{\delta L_W^2}{p-1}\kappa_2\left(\mu^{\delta,p}_\sigma\right)}\left(U(x)+\frac{L_W}{2}\left|x-\kappa_1\left(\mu^{\delta,p}_\sigma\right)\right|^2\right)\right)}{\int \exp\left(-\frac{2}{2\sigma+\frac{\delta L_W^2}{p-1}\kappa_2\left(\mu^{\delta,p}_\sigma\right)}\left(U(y)+\frac{L_W}{2}\left|y-\kappa_1\left(\mu^{\delta,p}_\sigma\right)\right|^2\right)\right)dy}dx.
\end{equation}
The pair $(\kappa_1(\mu^{\delta,p}_\sigma),\kappa_2(\mu^{\delta,p}_\sigma))$ is therefore a solution of 
\begin{align}
    \kappa_1=&\frac{\int_\mathbb{R}x\exp\left(-\frac{2}{2\sigma+\frac{\delta L_W^2}{p-1}\kappa_2}\left(U(x)+\frac{L_W}{2}|x-\kappa_1|^2\right)\right)dx}{\int_\mathbb{R} \exp\left(-\frac{2}{2\sigma+\frac{\delta L_W^2}{p-1}\kappa_2}\left(U(x)+\frac{L_W}{2}|x-\kappa_1|^2\right)\right)dx}\label{eq:pt_fixe_1}\\
    \kappa_2=&\frac{\int_\mathbb{R}\left(x-\kappa_1\right)^2\exp\left(-\frac{2}{2\sigma+\frac{\delta L_W^2}{p-1}\kappa_2}\left(U(x)+\frac{L_W}{2}|x-\kappa_1|^2\right)\right)dx}{\int_\mathbb{R} \exp\left(-\frac{2}{2\sigma+\frac{\delta L_W^2}{p-1}\kappa_2}\left(U(x)+\frac{L_W}{2}|x-\kappa_1|^2\right)\right)dx}\label{eq:pt_fixe_2}.
\end{align}
Thanks to \eqref{eq:def_stat_eff}, solving for $(\kappa_1, \kappa_2)$ the system of equations \eqref{eq:pt_fixe_1}-\eqref{eq:pt_fixe_2} is equivalent to finding a stationary distribution of \eqref{eq:Eff}. Define
\begin{align}
    g(x,\sigma,\kappa)=&\exp\left(-\frac{1}{\sigma}\left(U(x)+\frac{L_W}{2}|x-\kappa|^2\right)\right)\label{eq:def_g},\\
    f_1(\sigma,\kappa)=&\frac{\int_\mathbb{R}xg(x,\sigma,\kappa)dx}{\int_\mathbb{R} g(x,\sigma,\kappa)dx}\label{eq:def_f1},\\
    f_2(\sigma,\kappa)=&\frac{\int_\mathbb{R}(x-\kappa)^2g(x,\sigma,\kappa)dx}{\int_\mathbb{R}g(x,\sigma,\kappa)dx}\label{eq:def_f2},
\end{align}
such that, for the symbol $*\in\{0,\pm\}$ and $\mu_{\sigma,*}$ defined in Theorem~\ref{thm:Tugaut},  $\kappa_1(\mu_{\sigma,*})$ is a solution of $\kappa_1(\mu_{\sigma,*})=f_1(\sigma,\kappa_1(\mu_{\sigma,*}))$ and $\kappa_2(\mu_{\sigma,*})=f_2(\sigma,\kappa_1(\mu_{\sigma,*}))$  is the corresponding variance. 

\plb{
\begin{remark}[Wellposedness]
Since we are looking for stationary distributions of \eqref{eq:Eff}, we are in reality considering specific solutions with constant mean and variance. We could thus choose not to worry about wellposedness of \eqref{eq:Eff} since we could technically forget the nonlinearity.

However, for the sake of completeness, let us quickly turn our attention to the existence and uniqueness of solutions of \eqref{eq:Eff} in the specific case of $U$ and $W$ given in Theorem~\ref{thm:phase_trans_eff}, for which the process is
\begin{align*}
    \left\{
    \begin{array}{l}
    d\bar{X}^{e,\delta,p}_t=u(\bar{X}^{e,\delta,p}_t)+f\ast \rho^{e,\delta,p}_t(\bar{X}^{e,\delta,p}_t)dt+\bar{\sigma}(\rho^{e,\delta,p}_t)dB_t,\\
    \rho^{e,\delta,p}_t=\text{Law}(\bar{X}^{e,\delta,p}_t)
    \end{array}
    \right.
\end{align*}
in dimension 1, with
\begin{align*}
    u(x)=-x^3+x,\ \ \ f(x)=-L_Wx,\ \ \ 
    \text{and }\ \ \ \bar{\sigma}(\mu)=\left(2\sigma+\frac{\delta L_W^2}{p-1}\text{Var}(\mu)\right)^{1/2}.
\end{align*}
To prove existence and uniqueness, we use \cite[Theorem~2.5]{CdRS23}. To do so, we notice that there is a constant $C>0$ (which may change from line to line) such that
\begin{align*}
    (x-y)(u(x)-u(y))\leq C|x-y|^2,\ \ \ |u(x)-u(y)|\leq C(1+x^2+y^2)|x-y|.
\end{align*}
Another assumption we have to verify, and which is the main technical difficulty, is the following
\begin{align*}
    \left|\bar{\sigma}(\mu)-\bar{\sigma}(\nu)\right|\leq C\mathcal{W}_2(\mu,\nu).
\end{align*}
Since the square root function is Lipschitz continuous on $[2\sigma,\infty[$, it is sufficient to prove that there is $C>0$ such that $\left|\text{Var}(\mu)-\text{Var}(\nu)\right|\leq C\mathcal{W}_2(\mu,\nu).$
Denoting $X$ (resp. $Y$) a random variable distributed according to $\mu$ (resp. $\nu$), we have
\begin{align*}
    \left|\text{Var}(\mu)-\text{Var}(\nu)\right|\leq& \left|\mathbb{E}(X^2)-\mathbb{E}(Y^2)\right|+\left|\mathbb{E}(X)^2-\mathbb{E}(Y)^2\right|\\
    \leq&\left|\mathbb{E}((X-Y)(X+Y))\right|+\left|\mathbb{E}(X-Y)\mathbb{E}(X+Y)\right|\\
    \leq& 2\sqrt{\mathbb{E}((X-Y)^2)}\sqrt{\mathbb{E}((X+Y)^2)},
\end{align*}
where we use for this last line Cauchy-Schwarz inequality and Jensen's inequality. Considering $(X,Y)$ to be distributed according to the optimal coupling for the Wasserstein-2 distance between $\mu$ and $\nu$, we thus have
\begin{align*}
    \left|\text{Var}(\mu)-\text{Var}(\nu)\right|\leq C\sqrt{\mathbb{E}(X^2)+\mathbb{E}(Y^2)} \mathcal{W}_2(\mu,\nu).
\end{align*}
We thus should obtain existence and uniqueness for processes with bounded second moments. 

Let us now use this idea to prove wellposedness. We consider the following process, for a given $K>0$
\begin{equation}\label{eq:process_tronque}
    \left\{
    \begin{array}{l}
    d\bar{X}^{K}_t=u(\bar{X}^{K}_t)+f\ast \rho^{K}_t(\bar{X}^{K}_t)dt+\bar{\sigma}_K(\rho^{K}_t)dB_t,\\
    \rho^{K}_t=\text{Law}(\bar{X}^{K}_t)
    \end{array}
    \right.
\end{equation}
where $\bar{\sigma}_K(\mu)=\left(2\sigma+\frac{\delta L_W^2}{p-1}\left(\int x^2 d\mu(x) \wedge K-\left(\int xd\mu(x)\right)^2\wedge K\right)\right)^{1/2}.$ 
Let $a,b\in\mathbb{R}$ and $c\geq0$, we have $\left|a^2\wedge c-b^2\wedge c\right|\leq \left|a-b\right|\left|\sqrt{a^2\wedge c}+\sqrt{b^2\wedge c}\right|$. We obtain similarly as before, for $(X,Y)$ distributed according to the optimal coupling of $\mu$ and $\nu$
\begin{align*}
    \left|\mathbb{E}(X^2) \wedge K\right.&\left.-\mathbb{E}(X)^2\wedge K-\mathbb{E}(Y^2) \wedge K+\mathbb{E}(Y)^2\wedge K\right|\\
    \leq&\left|\mathbb{E}(X^2) \wedge K-\mathbb{E}(Y^2) \wedge K\right|+\left|\mathbb{E}(X)^2\wedge K-\mathbb{E}(Y)^2\wedge K\right|\\
    \leq&2\sqrt{K}\left|\sqrt{\mathbb{E}(X^2)}-\sqrt{\mathbb{E}(Y^2)}\right|+2\sqrt{K}\pierre{\left| |\mathbb{E}(X)|-|\mathbb{E}(Y)|\right|.}
\end{align*}
On one hand we have by Cauchy-Schwarz inequality
\begin{align*}
    \mathbb{E}((X-Y)^2)-\left|\sqrt{\mathbb{E}(X^2)}-\sqrt{\mathbb{E}(Y^2)}\right|^2=2\sqrt{\mathbb{E}X^2}\sqrt{\mathbb{E}Y^2}-2\mathbb{E}(XY)\geq0,
\end{align*}
which implies
\begin{align*}
    \left|\sqrt{\mathbb{E}(X^2)}-\sqrt{\mathbb{E}(Y^2)}\right|\leq& \sqrt{\mathbb{E}((X-Y)^2)}=\mathcal{W}_2(\mu,\nu).
\end{align*}
On the other hand
\begin{align*}
    \pierre{\left| |\mathbb{E}(X)|-|\mathbb{E}(Y)|\right|}\leq&\left|\mathbb{E}(X)-\mathbb{E}(Y)\right|\leq \mathbb{E}|X-Y|\leq \sqrt{\mathbb{E}((X-Y)^2)}=\mathcal{W}_2(\mu,\nu).
\end{align*}
Hence, $\bar{\sigma}_K$ is Lipschitz continuous. 

\pierre{Thus,  \cite[Theorem~2.5]{CdRS23} applies and} we have strong existence and uniqueness for \eqref{eq:process_tronque}. Furthermore, up until the stopping time 
\begin{align*}
    T_K=\inf\left\{t\geq0\text{ s.t. }\int x^2 d\rho^K_t(x)>K\right\},
\end{align*}
the solution of \eqref{eq:process_tronque} coincides with the solution of \eqref{eq:Eff}. Then, applying Itô's formula on the process \eqref{eq:Eff} for the function $x\mapsto x^2$, we obtain that the second moment of the solution is bounded. Hence, for $K$ large enough depending on the initial condition, $T_K=\infty$. We finally obtain the wellposedness of \eqref{eq:Eff}.
\end{remark}
}

Our goal is to compare the stationary distribution(s) for \eqref{eq:Eff} to the stationary distribution(s) for \eqref{eq:NL}, in particular in regards to this critical parameter $\sigma_c$. To do so, we begin by showing that any stationary distribution of \eqref{eq:NL} is a stationary distribution of \eqref{eq:Eff}, and conversely.


\begin{lemma}\label{lem:equiv_stat_NL_Eff}
Let $\mu$ be a probability measure on $\mathbb{R}$. 
\begin{itemize}
    \item If $\mu$ is a solution of \eqref{eq:def_stat_eff} for a diffusion coefficient $\sigma'$, then $\mu$ is a solution of \eqref{eq:def_stat_nl} for a diffusion coefficient $\sigma=\sigma'+\frac{\delta L_W^2}{2(p-1)}\kappa_2(\mu)$,
    \item If $\mu$ is a solution of \eqref{eq:def_stat_nl} for a diffusion coefficient $\sigma$ and $\frac{\delta}{p-1}<\frac{2\sigma}{ L_W^2\kappa_2(\mu)}$, then $\mu$ is a solution of \eqref{eq:def_stat_eff} for a diffusion coefficient $\sigma'=\sigma-\frac{\delta L_W^2}{2(p-1)}\kappa_2(\mu)$.
\end{itemize}
\end{lemma}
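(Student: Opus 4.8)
The plan is to show that the defining fixed-point equation \eqref{eq:def_stat_eff} for \eqref{eq:Eff} is, up to a reparametrization of the diffusion coefficient, literally the same equation as \eqref{eq:def_stat_nl} for \eqref{eq:NL}. The key observation is that, for the potentials \eqref{eq:def_U_W}, the variance term $\Sigma(x,\rho)=L_W^2\kappa_2(\rho)$ does not depend on $x$, so the ``effective'' diffusion coefficient appearing in \eqref{eq:def_stat_eff} is simply the constant $\sigma'+\frac{\delta L_W^2}{2(p-1)}\kappa_2(\mu)$ (note the factor $2$: in \eqref{eq:Eff} the diffusion coefficient in front of $dB_t$ is $2\sigma+\frac{\delta}{p-1}\Sigma$, so when we write the Gibbs form we divide $U+W\ast\mu$ by $\sigma+\frac{\delta}{2(p-1)}\Sigma$). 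Moreover, $W\ast\mu(x)=\frac{L_W}{2}\int|x-y|^2\mu(dy)=\frac{L_W}{2}|x-\kappa_1(\mu)|^2+\frac{L_W}{2}\kappa_2(\mu)$, and the additive constant $\frac{L_W}{2}\kappa_2(\mu)$ cancels between numerator and denominator in the Gibbs formula; hence \eqref{eq:def_stat_nl} for $\mu$ at coefficient $\sigma$ is exactly the statement that $\mu(dx)\propto\exp\bigl(-\frac1\sigma(U(x)+\frac{L_W}{2}|x-\kappa_1(\mu)|^2)\bigr)dx$, which is the same as \eqref{eq:def_stat_eff} with $\sigma$ replaced by $\sigma'+\frac{\delta L_W^2}{2(p-1)}\kappa_2(\mu)$.

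Concretely, for the first bullet I would start from $\mu$ solving \eqref{eq:def_stat_eff} for $\sigma'$, read off that $\kappa_2(\mu)=f_2(\sigma,\kappa_1(\mu))$ with $\sigma:=\sigma'+\frac{\delta L_W^2}{2(p-1)}\kappa_2(\mu)$ is the variance appearing in that self-consistent formula, and then observe that the right-hand side of \eqref{eq:def_stat_eff} is verbatim the right-hand side of \eqref{eq:def_stat_nl} at coefficient $\sigma$ once we use the convolution identity above. The only thing to check is internal consistency: the quantity I call $\kappa_2(\mu)$ really is the variance of $\mu$, which holds because $\mu$ by hypothesis satisfies the fixed-point system \eqref{eq:pt_fixe_1}--\eqref{eq:pt_fixe_2}. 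The second bullet is the converse: given $\mu$ solving \eqref{eq:def_stat_nl} for $\sigma$, I set $\sigma':=\sigma-\frac{\delta L_W^2}{2(p-1)}\kappa_2(\mu)$; the hypothesis $\frac{\delta}{p-1}<\frac{2\sigma}{L_W^2\kappa_2(\mu)}$ is exactly what guarantees $\sigma'>0$, so that \eqref{eq:Eff} and its stationary equation \eqref{eq:def_stat_eff} make sense, and then the same algebraic identity shows $\mu$ solves \eqref{eq:def_stat_eff} at $\sigma'$, because $\sigma=\sigma'+\frac{\delta L_W^2}{2(p-1)}\kappa_2(\mu)$ is precisely the effective coefficient.

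The main (and really only) obstacle is bookkeeping: being careful about the factor of $2$ between the $2\sigma$ in the SDE diffusion coefficient and the $\sigma$ in the Gibbs exponent, and checking that the additive constant $\frac{L_W}{2}\kappa_2(\mu)$ coming from expanding $W\ast\mu$ cancels in the normalized density. There is no analytic difficulty — no compactness, no fixed-point theorem is needed here — since we are only translating one self-consistency equation into another; existence of solutions (and the count of them) is handled separately via Theorem~\ref{thm:Tugaut} and the subsequent analysis. I would therefore write the proof as a short chain of equalities: expand $W\ast\mu(x)$, substitute into \eqref{eq:def_stat_nl}, cancel the $x$-independent factor, and identify the resulting coefficient with $\sigma'+\frac{\delta L_W^2}{2(p-1)}\kappa_2(\mu)$, doing the two implications in turn and pointing out where the positivity hypothesis on $\frac{\delta}{p-1}$ is used.
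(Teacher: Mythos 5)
Your proposal is correct and follows essentially the same route as the paper: both proofs amount to observing that, since $\Sigma(x,\rho)=L_W^2\kappa_2(\rho)$ is independent of $x$, the self-consistency equation \eqref{eq:def_stat_eff} at coefficient $\sigma'$ is verbatim \eqref{eq:def_stat_nl} at coefficient $\sigma=\sigma'+\frac{\delta L_W^2}{2(p-1)}\kappa_2(\mu)$ (the paper phrases this through the fixed-point system \eqref{eq:pt_fixe_1}--\eqref{eq:pt_fixe_2} for $(\kappa_1,\kappa_2)$, while you work directly with the Gibbs densities and make explicit the cancellation of the constant $\frac{L_W}{2}\kappa_2(\mu)$ from expanding $W\ast\mu$, which the paper leaves implicit), and both use the hypothesis $\frac{\delta}{p-1}<\frac{2\sigma}{L_W^2\kappa_2(\mu)}$ only to ensure $\sigma'>0$ in the converse direction.
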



\begin{proof}
Let us prove the two points.

\noindent\textbf{A stationary distribution of \eqref{eq:Eff} is a stationary distribution of \eqref{eq:NL}.} Assume $\mu$ satisfies \eqref{eq:def_stat_eff} for a given $(\delta,p,\sigma')$, which in particular is equivalent to the pair $(\kappa_1(\mu),\kappa_2(\mu))$ satisfying
\begin{align*}
    \kappa_1(\mu)=&\frac{\int_\mathbb{R}x\exp\left(-\frac{2}{2\sigma'+\frac{\delta L_W^2}{p-1}\kappa_2(\mu)}\left(U(x)+\frac{L_W}{2}|x-\kappa_1(\mu)|^2\right)\right)dx}{\int_\mathbb{R} \exp\left(-\frac{2}{2\sigma'+\frac{\delta L_W^2}{p-1}\kappa_2(\mu)}\left(U(x)+\frac{L_W}{2}|x-\kappa_1(\mu)|^2\right)\right)dx}\\
    \kappa_2(\mu)=&\frac{\int_\mathbb{R}\left(x-\kappa_1(\mu)\right)^2\exp\left(-\frac{2}{2\sigma'+\frac{\delta L_W^2}{p-1}\kappa_2(\mu)}\left(U(x)+\frac{L_W}{2}|x-\kappa_1(\mu)|^2\right)\right)dx}{\int_\mathbb{R} \exp\left(-\frac{2}{2\sigma'+\frac{\delta L_W^2}{p-1}\kappa_2(\mu)}\left(U(x)+\frac{L_W}{2}|x-\kappa_1(\mu)|^2\right)\right)dx}.
\end{align*}
Denoting $\sigma=\sigma'+\frac{\delta L_W^2}{2(p-1)}\kappa_2(\mu)$, we thus have
\begin{align*}
    \kappa_1(\mu)=&\frac{\int_\mathbb{R}x\exp\left(-\frac{1}{\sigma}\left(U(x)+\frac{L_W}{2}|x-\kappa_1(\mu)|^2\right)\right)dx}{\int_\mathbb{R} \exp\left(-\frac{1}{\sigma}\left(U(x)+\frac{L_W}{2}|x-\kappa_1(\mu)|^2\right)\right)dx}\\
    \kappa_2(\mu)=&\frac{\int_\mathbb{R}\left(x-\kappa_1(\mu)\right)^2\exp\left(-\frac{1}{\sigma}\left(U(x)+\frac{L_W}{2}|x-\kappa_1(\mu)|^2\right)\right)dx}{\int_\mathbb{R} \exp\left(-\frac{1}{\sigma}\left(U(x)+\frac{L_W}{2}|x-\kappa_1(\mu)|^2\right)\right)dx}.
\end{align*}
Thus, if $\mu$ is a stationary distribution for \eqref{eq:Eff} with diffusion coefficient $\sigma'$ and parameters $\delta$ and $p$, it is also a stationary distribution for \eqref{eq:NL} with diffusion coefficient $\sigma=\sigma'+\frac{\delta L_W^2}{2(p-1)}\kappa_2(\mu)$.\\

\noindent\textbf{A stationary distribution of \eqref{eq:NL} is a stationary distribution of \eqref{eq:Eff}.}
Assume $\mu$ satisfies \eqref{eq:def_stat_eff} for a given $\sigma$, which in particular is equivalent to the pair $(\kappa_1(\mu),\kappa_2(\mu))$ satisfying
\begin{align*}
    \kappa_1(\mu)=&\frac{\int_\mathbb{R}x\exp\left(-\frac{1}{\sigma}\left(U(x)+\frac{L_W}{2}|x-\kappa_1(\mu)|^2\right)\right)dx}{\int_\mathbb{R} \exp\left(-\frac{1}{\sigma}\left(U(x)+\frac{L_W}{2}|x-\kappa_1(\mu)|^2\right)\right)dx}\\
    \kappa_2(\mu)=&\frac{\int_\mathbb{R}\left(x-\kappa_1(\mu)\right)^2\exp\left(-\frac{1}{\sigma}\left(U(x)+\frac{L_W}{2}|x-\kappa_1(\mu)|^2\right)\right)dx}{\int_\mathbb{R} \exp\left(-\frac{1}{\sigma}\left(U(x)+\frac{L_W}{2}|x-\kappa_1(\mu)|^2\right)\right)dx}.
\end{align*}
Consider parameters $\delta$ and $p$, and denote $\sigma'=\sigma-\frac{\delta L_W^2}{2(p-1)}\kappa_2(\mu)$. Notice $\kappa_2(\mu)$ is independent of $\delta$ and $p$ thus, provided $\frac{\delta }{(p-1)}$ is small enough, we may ensure $\sigma'>0$ and have
\begin{align*}
    \kappa_1(\mu)=&\frac{\int_\mathbb{R}x\exp\left(-\frac{2}{2\sigma'+\frac{\delta L_W^2}{p-1}\kappa_2(\mu)}\left(U(x)+\frac{L_W}{2}|x-\kappa_1(\mu)|^2\right)\right)dx}{\int_\mathbb{R} \exp\left(-\frac{2}{2\sigma'+\frac{\delta L_W^2}{p-1}\kappa_2(\mu)}\left(U(x)+\frac{L_W}{2}|x-\kappa_1(\mu)|^2\right)\right)dx}\\
    \kappa_2(\mu)=&\frac{\int_\mathbb{R}\left(x-\kappa_1(\mu)\right)^2\exp\left(-\frac{2}{2\sigma'+\frac{\delta L_W^2}{p-1}\kappa_2(\mu)}\left(U(x)+\frac{L_W}{2}|x-\kappa_1(\mu)|^2\right)\right)dx}{\int_\mathbb{R} \exp\left(-\frac{2}{2\sigma'+\frac{\delta L_W^2}{p-1}\kappa_2(\mu)}\left(U(x)+\frac{L_W}{2}|x-\kappa_1(\mu)|^2\right)\right)dx}.
\end{align*}
Thus, given two parameters $\delta$ and $p$, if $\mu$ is a stationary distribution for \eqref{eq:NL} with diffusion coefficient $\sigma$, it is also a stationary distribution for \eqref{eq:Eff} with diffusion coefficient $\sigma'=\sigma-\frac{\delta L_W^2}{2(p-1)}\kappa_2(\mu)$.
\end{proof}

Unfortunately, this lemma does not directly imply the existence of a phase transition for \eqref{eq:Eff}. Several issues arise :
\begin{itemize}
    \item the existence of a symmetric stationary distribution $\mu_{\sigma,0}$ for \eqref{eq:NL} with diffusion coefficient $\sigma>0$ only yields the existence of a symmetric stationary distribution for \eqref{eq:Eff} for a specific diffusion coefficient $\sigma'=\sigma-\frac{\delta L_W^2}{2(p-1)}\kappa_2(\mu)$. We  need to show that, for any $\sigma>0$, there exists a symmetric stationary distribution for \eqref{eq:Eff}.
    \item likewise, the existence of non symmetric stationary distribution for \eqref{eq:Eff} is only ensured for specific diffusion coefficients.
    \item we cannot infer the uniqueness of the symmetric stationary distribution for \eqref{eq:Eff} from the uniqueness of the symmetric  stationary distribution for \eqref{eq:NL}. Given $\sigma'>0$, there may \textit{a priori} be two stationary distributions for \eqref{eq:Eff}, denoted $\mu_1$ and $\mu_2$, with different variance $\kappa_2(\mu_1)\neq \kappa_2(\mu_2)$, which thus correspond to two different symmetric stationary distributions for \eqref{eq:NL} with diffusion coefficients ${\sigma_1=\sigma'+\frac{\delta L_W^2}{2(p-1)}\kappa_2(\mu_1)\neq \sigma'+\frac{\delta L_W^2}{2(p-1)}\kappa_2(\mu_2)=\sigma_2}$.
\end{itemize}

We therefore dedicate the remainder of this document to the proof of Theorem~\ref{thm:phase_trans_eff}.

%
%
%
%

\subsection{Some results on the stationary distribution(s) of \eqref{eq:NL}}

The study of the critical parameter of \eqref{eq:Eff} relies, by Lemma~\ref{lem:equiv_stat_NL_Eff}, on the study of the one of \eqref{eq:NL}. We gather here some results concerning the latter. They are numerically illustrated in Figure~\ref{Fig:analyse_tugaut}, and the proofs are postponed to Appendix~\ref{app:proofs_NL}.

\begin{figure}
\centering
\includegraphics[width=\linewidth]{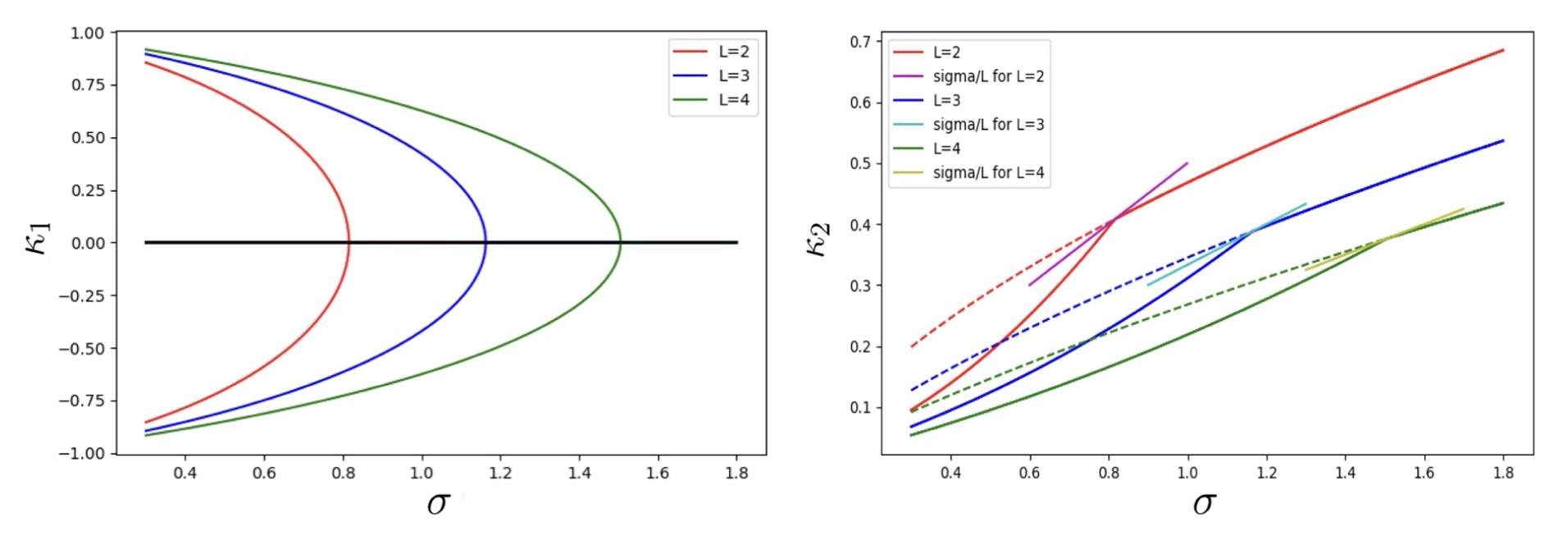}
\caption{\textbf{Left} : The means of $\mu_{\sigma,\pm}$ as a function of $\sigma$ for different values of $L_W$, as given in Theorem~\ref{thm:Tugaut}. \textbf{Right} : The variances of $\mu_{\sigma,\pm}$ as a function of $\sigma$ for different values of $L_W$. In dotted line the variance of $\mu_{\sigma,0}$, and in solid line the variance of $\mu_{\sigma,\pm}$.}
\label{Fig:analyse_tugaut}
\end{figure}


\begin{lemma}\label{lem:res_NL}
We have the following results concerning the stationary distribution(s) of \eqref{eq:NL}. 
\begin{itemize}
    \item \textbf{Symmetry. } We have $\kappa_1(\mu_{\sigma,+})=-\kappa_1(\mu_{\sigma,-})$ and $\kappa_2(\mu_{\sigma,+})=\kappa_2(\mu_{\sigma,-})$.
    \item \textbf{Moment bound.} Let the symbol $*\in\{0,\pm\}$. Consider $\mu_{\sigma,*}$ given in Theorem~\ref{thm:Tugaut}, and $\kappa_1(\mu_{\sigma,*})$ (resp. $\kappa_2(\mu_{\sigma,*})$) the corresponding mean (resp. variance). There exists $C_{\kappa_1}, C_{\kappa_2}>0$ such that for $\sigma\in[0,\sigma_c]$ we have 
    \begin{equation}\label{eq:borne_m1_tug}
        \left|\kappa_1(\mu_{\sigma,*})\right|\leq C_{\kappa_1},\ \ \ \text{ and }\ \ \ \left|\kappa_2(\mu_{\sigma,*})\right|\leq C_{\kappa_2}.
    \end{equation}
    \item \textbf{Critical variance.} We have the equality 
    \begin{equation}\label{eq:val_var_crit}
        \kappa_2\left(\mu_{\sigma_c}\right)=\frac{\sigma_c}{L_W}.
    \end{equation}
    Furthermore,  for $\sigma<\sigma_c$ we have
    $\kappa_2\left(\mu_{\sigma,\pm}\right)<\frac{\sigma}{L_W}$ and $\kappa_2\left(\mu_{\sigma,0}\right)>\frac{\sigma}{L_W}$, and for $\sigma>\sigma_c$ we have $\kappa_2\left(\mu_{\sigma,0}\right)<\frac{\sigma}{L_W}$.
    \item \textbf{Continuity.} The function $\sigma\mapsto \kappa_1(\mu_{\sigma,+})$, with the convention $\mu_{\sigma,+}=\mu_{\sigma,0}$ for $\sigma\geq\sigma_c$, is continuous on $]0,\infty[$. In particular, this also yields the continuity of $\sigma\mapsto \kappa_2(\mu_{\sigma,+})=f_2(\sigma,\kappa_1(\mu_{\sigma,+}))$.
    \item \textbf{Lipschitz continuity.} Let $\sigma_0>0$. The functions $\sigma\mapsto \kappa_2(\mu_{\sigma,0})$ and $\sigma\mapsto \kappa_2(\mu_{\sigma,\pm})$ are Lipschitz continuous, respectively on $[\sigma_0,\infty[$ and on $[\sigma_0,\sigma_c]$. More precisely, there exists $C>0$ such that for respectively $\sigma>\sigma_0$ and $\sigma\in]\sigma_0,\sigma_c[$ we have $\left|\frac{d}{d\sigma }\kappa_2(\mu_{\sigma,*})\right|\leq C$.
\end{itemize}
\end{lemma}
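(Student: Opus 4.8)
The plan is to handle the five items essentially independently, all starting from the fixed-point characterization \eqref{eq:def_stat_nl}, which for the quadratic $W$ of \eqref{eq:def_U_W} reduces to the statement that $\mu_{\sigma,*}$ is the Gibbs measure with potential $U(x)+\frac{L_W}{2}|x-\kappa_1|^2$ and temperature $\sigma$, where $\kappa_1=\kappa_1(\mu_{\sigma,*})$ solves $\kappa_1=f_1(\sigma,\kappa_1)$ (with $f_1,f_2,g$ as in \eqref{eq:def_f1}--\eqref{eq:def_f2}). For \textbf{Symmetry}, I would observe that $x\mapsto -x$ maps the density $g(x,\sigma,\kappa)$ to $g(-x,\sigma,-\kappa)$ (using that $U$ is even and $|{-x}-(-\kappa)|^2=|x-\kappa|^2$); hence if $\kappa_1$ is a fixed point of $f_1(\sigma,\cdot)$ then so is $-\kappa_1$, and the measure associated to $-\kappa_1$ is the pushforward of the one associated to $\kappa_1$ under $x\mapsto -x$. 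Since by Theorem~\ref{thm:Tugaut} there are exactly two non-symmetric stationary measures with opposite-sign means, they must be exchanged by this reflection, giving both $\kappa_1(\mu_{\sigma,+})=-\kappa_1(\mu_{\sigma,-})$ and the equality of variances.

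For the \textbf{Moment bound}, the point is that any stationary $\mu_{\sigma,*}$ is a probability measure proportional to $\exp(-\frac1\sigma(U(x)+\frac{L_W}{2}|x-\kappa_1|^2))$; expanding $U(x)+\frac{L_W}{2}|x-\kappa_1|^2=\frac{x^4}{4}+(\frac{L_W-1}{2})x^2-L_W\kappa_1 x+\frac{L_W}{2}\kappa_1^2$, the quartic term dominates, so one gets Gaussian-type tail bounds on all moments that are uniform in $\sigma\in[0,\sigma_c]$ \emph{provided} $|\kappa_1|$ stays bounded. To bound $|\kappa_1|$ one uses $|\kappa_1|=|f_1(\sigma,\kappa_1)|\le \int |x|\,\mu_{\sigma,*}(dx)$ together with the quartic confinement again: a crude bound such as $\int x^2\mu_{\sigma,*}(dx)\le C(1+\kappa_1^2)$ is not quite enough by itself, so I would instead argue that for $|\kappa_1|$ large the barycenter of the above Gibbs measure is strictly smaller than $|\kappa_1|$ in absolute value (the quartic well recenters mass), so a fixed point forces $|\kappa_1|\le C_{\kappa_1}$; then $\kappa_2\le\int x^2\mu_{\sigma,*}(dx)\le C_{\kappa_2}$ follows. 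For the \textbf{Critical variance}, I would use the integration-by-parts identity available for one-dimensional Gibbs measures: writing $V_\sigma(x)=U(x)+\frac{L_W}{2}|x-\kappa_1|^2$, stationarity gives $\int V_\sigma'(x)\varphi(x)\mu(dx)=\sigma\int\varphi'(x)\mu(dx)$ for test functions $\varphi$. Taking $\varphi(x)=x-\kappa_1$ yields $\int (x-\kappa_1)U'(x)\mu(dx)+L_W\kappa_2=\sigma$; at $\sigma_c$ the symmetric solution $\kappa_1=0$ and a criticality condition (the relation $\frac{\partial}{\partial\kappa_1}f_1(\sigma,0)=1$ at $\sigma=\sigma_c$, which in the quadratic case expands to $\frac{L_W}{\sigma_c}\kappa_2(\mu_{\sigma_c,0})=1$) gives exactly $\kappa_2(\mu_{\sigma_c})=\sigma_c/L_W$. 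The inequalities for $\sigma\ne\sigma_c$ should follow from monotonicity of $\sigma\mapsto\frac{L_W}{\sigma}\kappa_2(\mu_{\sigma,*})$ around the bifurcation, comparing the fixed-point equation to its linearization.

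For \textbf{Continuity} and \textbf{Lipschitz continuity}, the engine is the implicit function theorem applied to $F(\sigma,\kappa):=\kappa-f_1(\sigma,\kappa)$: the maps $\kappa\mapsto f_1(\sigma,\kappa)$ and its $\sigma,\kappa$ derivatives are smooth in $(\sigma,\kappa)$ on $\sigma>0$ (differentiation under the integral sign is justified by the quartic tails and the moment bound just proved), so at any point where $\partial_\kappa F\ne 0$ the solution branch is locally smooth, and in particular $\sigma\mapsto\kappa_1(\mu_{\sigma,+})$ is $C^1$, hence $\sigma\mapsto\kappa_2=f_2(\sigma,\kappa_1(\mu_{\sigma,+}))$ is too, with derivative bounds uniform on $[\sigma_0,\infty[$ or $[\sigma_0,\sigma_c]$ since the relevant quantities (moments and the Jacobian) are bounded there. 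The one delicate point is $\sigma=\sigma_c$ where the non-symmetric branch meets the symmetric one and $\partial_\kappa F$ degenerates: there I would argue continuity directly. Given $\sigma_n\to\sigma_c$ with $\kappa_1(\mu_{\sigma_n,+})$ along a subsequence converging (it is bounded), the limit is a fixed point of $f_1(\sigma_c,\cdot)$, hence $0$ by Theorem~\ref{thm:Tugaut}; uniqueness of the subsequential limit gives $\kappa_1(\mu_{\sigma,+})\to 0=\kappa_1(\mu_{\sigma_c,0})$, and then $\kappa_2$ converges by dominated convergence.

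The main obstacle I anticipate is the \textbf{Lipschitz} claim near, but not at, $\sigma_c$ on the non-symmetric branch: one must check that $\partial_\kappa F(\sigma,\kappa_1(\mu_{\sigma,\pm}))$ does not vanish for $\sigma\in]\sigma_0,\sigma_c[$, i.e. that the non-symmetric fixed point is non-degenerate away from the bifurcation. This is essentially a stability/transversality statement about Tugaut's bifurcation diagram; I would establish it by a convexity argument — showing $\kappa\mapsto f_1(\sigma,\kappa)-\kappa$ changes sign transversally at $\kappa_1(\mu_{\sigma,\pm})$ because $f_1(\sigma,\cdot)$ is, for $\sigma<\sigma_c$, an S-shaped function crossing the diagonal transversally at its outer fixed points — and this convexity/sign analysis of $f_1$ is where the real work lies.
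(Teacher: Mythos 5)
Your first four items are essentially workable (the symmetry and compactness/uniqueness continuity arguments match the paper's; your moment bound via "the quartic well recenters the barycenter" is a legitimate alternative to the paper's Lyapunov/It\^o argument on \eqref{eq:NL}), but the \textbf{Lipschitz continuity} of $\sigma\mapsto\kappa_2(\mu_{\sigma,\pm})$ on $[\sigma_0,\sigma_c]$ — the one point the paper itself flags as the main technical difficulty — is not established by your argument, and the step you rely on is false as stated. You claim the implicit function theorem gives derivative bounds ``uniform on $[\sigma_0,\sigma_c]$ since the relevant quantities (moments and the Jacobian) are bounded there.'' The Jacobian is \emph{not} bounded away from zero on the asymmetric branch: by \eqref{eq:RBM_der_part}, $1-\partial_\kappa f_1(\sigma,\kappa_1(\mu_{\sigma,+}))=1-\frac{L_W}{\sigma}\kappa_2(\mu_{\sigma,+})$, which tends to $0$ as $\sigma\to\sigma_c^-$ by the very critical-variance identity \eqref{eq:val_var_crit} you invoke. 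Consequently $\frac{d}{d\sigma}\kappa_1(\mu_{\sigma,+})=\partial_\sigma f_1/(1-\partial_\kappa f_1)$ blows up; in fact $\kappa_1(\mu_{\sigma,+})\sim C\sqrt{\sigma_c-\sigma}$, so the mean is \emph{not} Lipschitz near $\sigma_c$. Your fallback plan (transversality of the outer fixed points via an S-shape/convexity analysis of $f_1(\sigma,\cdot)$) can at best give non-degeneracy for each fixed $\sigma<\sigma_c$, hence local Lipschitzness on compacts of $]0,\sigma_c[$; it cannot give the uniform bound $\left|\frac{d}{d\sigma}\kappa_2(\mu_{\sigma,\pm})\right|\le C$ up to $\sigma_c$, because the transversality itself degenerates at rate $\kappa_1^2\sim\sigma_c-\sigma$.

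The missing idea is a cancellation, which is what the paper's proof is built around: by symmetry $\partial_\kappa f_2(\sigma,0)=0$ and $\partial_\sigma f_1(\sigma,0)=0$, so along the branch $\partial_\kappa f_2(\sigma,\kappa_1(\mu_{\sigma,+}))=O(\kappa_1)$ and $\partial_\sigma f_1(\sigma,\kappa_1(\mu_{\sigma,+}))=O(\kappa_1)$, while a Taylor expansion of $f_1$ (using $\partial^2_\kappa f_1(\sigma,0)=0$, $\partial^3_\kappa f_1(\sigma_c,0)<0$ and the strict monotonicity of $\sigma\mapsto\partial_\kappa f_1(\sigma,0)$) yields the quantitative lower bound $1-\partial_\kappa f_1(\sigma,\kappa_1(\mu_{\sigma,+}))\gtrsim\kappa_1(\mu_{\sigma,+})^2$ together with the rate $\kappa_1\sim C\sqrt{\sigma_c-\sigma}$. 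Only then does the chain rule $\frac{d}{d\sigma}\kappa_2=\partial_\sigma f_2+\partial_\kappa f_2\cdot\frac{d}{d\sigma}\kappa_1$ stay bounded: the $O(1/\kappa_1)$ blow-up of $\frac{d}{d\sigma}\kappa_1$ is exactly compensated by the $O(\kappa_1)$ smallness of $\partial_\kappa f_2$. None of this appears in your proposal. Relatedly, the inputs you treat as available — the criticality identity $\partial_\kappa f_1(\sigma_c,0)=1$, the strict inequalities $\kappa_2(\mu_{\sigma,\pm})<\sigma/L_W$ and $\kappa_2(\mu_{\sigma,0})\gtrless\sigma/L_W$, and the transversality at the outer fixed points — do not follow from Theorem~\ref{thm:Tugaut} as stated; the paper imports them from the \emph{proof} in \cite{tugaut} (the properties of $\xi$ in \eqref{eq:def_xi}), and your own closing admission that ``this convexity/sign analysis of $f_1$ is where the real work lies'' leaves precisely that work, and hence these items and the IFT non-degeneracy they feed, unproved.
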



\begin{remark}
The bound~\eqref{eq:borne_m1_tug}, combined with the knowledge of the fact that for $\sigma\geq\sigma_c$ there only exists a symmetric stationary distribution for \eqref{eq:NL} as well as Lemma~\ref{lem:equiv_stat_NL_Eff}, shows that we can restrict our study of the stationary distribution for both \eqref{eq:NL} and \eqref{eq:Eff} to a compact set of means $\kappa_1\in[-C_{\kappa_1},C_{\kappa_1}]$.
\end{remark}


\begin{remark}
The main technical difficulty lies in the proof of the Lipschitz continuity of $\sigma\mapsto \kappa_2(\mu_{\sigma,0})$ and $\sigma\mapsto \kappa_2(\mu_{\sigma,\pm})$, since it turns out that the mean $\sigma\mapsto \kappa_1(\mu_{\sigma,\pm})$ is not Lipschitz continuous near the critical parameter $\sigma_c$ (See Figure~\ref{Fig:analyse_tugaut}). It therefore requires a careful estimation of the mean and variance around $\sigma_c$, and the proof is a section of its own (See Appendix~\ref{app:proof_lip}).
\end{remark}

%
%
%
%

\subsection{Phase transition for the effective dynamics}\label{sec:phase_trans_eff}

Let $\sigma_0>0$ and define, for $*\in\{0,\pm\}$, the function $g_{eff,*}:\sigma \mapsto \sigma-\frac{\delta L_W^2}{2(p-1)}\kappa_2(\mu_{\sigma,*})$. 

From Lemma~\ref{lem:equiv_stat_NL_Eff}, if $\mu_{\sigma,*}$ is a stationary distribution for \eqref{eq:NL}, then it is a stationary distribution for \eqref{eq:Eff} with diffusion coefficient $\sigma'=g_{eff,*}(\sigma)$. 

By Lemma~\ref{lem:res_NL}, $\sigma\mapsto \kappa_2(\mu_{\sigma,0})$ is a Lipschitz continuous function on $[\sigma_0,\infty[$ and $\sigma\mapsto \kappa_2(\mu_{\sigma,\pm})$ is a Lipschitz continuous function on $[\sigma_0,\sigma_c]$, and, more precisely, in both cases we obtain that $\left|\frac{d}{d\sigma}\kappa_2(\mu_{\sigma,*})\right|$ is bounded by some constant $C>0$.

In this case, the function $\sigma\mapsto g_{eff,*}(\sigma)$ is such that $g_{eff,*}'(\sigma)=1-\frac{\delta L_W^2}{2(p-1)}\frac{d}{d\sigma}\kappa_2(\mu_{\sigma,*})$ and thus $g_{eff,*}'(\sigma)\in\left[1-\frac{\delta L_W^2}{2(p-1)}C,1+\frac{\delta L_W^2}{2(p-1)}C\right]$. In particular, for $\frac{\delta}{p-1}$ sufficiently small, $g_{eff,*}(\sigma)$ is both an increasing continuous function and positive.

Thus, $g_{eff,0}$ and $g_{eff,\pm}$ are two injective functions. In particular, $g_{eff,\pm}$ is a bijection from $[\sigma_0,\sigma_c]$ to $[g_{eff,\pm}(\sigma_0),g_{eff,\pm}(\sigma_c)]$. 

Finally, notice that $g_{eff,\pm}(\sigma_c)=g_{eff,0}(\sigma_c)=\sigma_c\left(1-\frac{\delta L_W}{2(p-1)}\right)$, that $g_{eff,*}(\sigma_0)\leq \sigma_0$, and that, up to the additional assumption $\frac{2(\sigma_c-\sigma_0)}{\sigma_c L_W}>\frac{\delta}{p-1}$, we may assume $g_{eff,*}(\sigma_c)>\sigma_0$.\\

We may now state the following facts concerning the stationary distribution(s) for \eqref{eq:Eff}.

\begin{itemize}
    \item \textbf{There exists at least one symmetric stationary distribution.} Since $g_{eff,0}'\geq 1-\frac{\delta L_W^2}{2(p-1)}C$, $g_{eff,0}$ is an increasing function such that $g_{eff,0}(x)\xrightarrow[x\rightarrow\infty]{}\infty$. Thus, $g_{eff,0}$ is a bijection from $[\sigma_0,\infty[$ to $[g_{eff,0}(\sigma_0),\infty[$. Therefore, for all $\sigma\in[g_{eff,0}(\sigma_0),\infty[$, there exists $\tilde{\sigma}$ such that $g_{eff,0}(\tilde{\sigma})=\sigma$. In other words, $\mu_{\tilde{\sigma},0}$ is also a symmetric stationary distribution for \eqref{eq:Eff} with diffusion coefficient $\sigma$.
    
    \item \textbf{There exists at most one symmetric stationary distribution.} Let $\sigma\geq\sigma_0$ and $\kappa_1=0$, and assume there are two symmetric stationary distributions of \eqref{eq:Eff} with diffusion coefficient $\sigma$. This yields two  coefficients $\sigma',\sigma''\geq\sigma>0$ such that
\begin{align*}
    \sigma'=&\sigma+\frac{\delta L_W^2}{2(p-1)}\kappa_2\left(\mu^{\delta,p}_{\sigma,1}\right)\\
    \sigma''=&\sigma+\frac{\delta L_W^2}{2(p-1)}\kappa_2\left(\mu^{\delta,p}_{\sigma,2}\right),
\end{align*}
where $\mu^{\delta,p}_{\sigma,1}$ and $\mu^{\delta,p}_{\sigma,2}$ denote the two stationary distributions. We consider $\mu_{\sigma'}$ ($=\mu^{\delta,p}_{\sigma,1}$) and $\mu_{\sigma''}$ ($=\mu^{\delta,p}_{\sigma,2}$)  the corresponding (unique) symmetric stationary distributions of \eqref{eq:NL}. Because $\sigma'$ and $\sigma''$ are greater than $\sigma_0$, there exists a constant $K$, possibly depending on $\sigma_0$ and $L_W$, such that by Lemma~\ref{lem:res_NL}
\begin{align}
    \left|\kappa_2\left(\mu_{\sigma''}\right)-\kappa_2\left(\mu_{\sigma'}\right)\right|=&\left|f_2(\sigma'',0)-f_2(\sigma',0)\right|\leq K|\sigma''-\sigma'|\nonumber\\
    \text{ i.e. }\ \ \ \left|\kappa_2\left(\mu_{\sigma''}\right)-\kappa_2\left(\mu_{\sigma'}\right)\right|\leq&\frac{K\delta L_W^2}{2(p-1)}\left|\kappa_2\left(\mu^{\delta,p}_{\sigma,2}\right)-\kappa_2\left(\mu^{\delta,p}_{\sigma,1}\right)\right|.\label{eq:contr_unicite_dist}
\end{align}
Because the stationary distributions of \eqref{eq:Eff} are uniquely defined by their mean and variance, 
\begin{align*}
    \kappa_2\left(\mu_{\sigma''}\right)=\kappa_2\left(\mu^{\delta,p}_{\sigma,2}\right)\ \ \ \text{ and }\ \ \ \kappa_2\left(\mu_{\sigma'}\right)=\kappa_2\left(\mu^{\delta,p}_{\sigma,1}\right),
\end{align*}
and we obtain from \eqref{eq:contr_unicite_dist} that, for $\frac{\delta}{p-1}$ sufficiently small, $\kappa_2\left(\mu^{\delta,p}_{\sigma,1}\right)=\kappa_2\left(\mu^{\delta,p}_{\sigma,2}\right)$ and thus that $\mu^{\delta,p}_{\sigma,1}=\mu^{\delta,p}_{\sigma,2}$.

    \item \textbf{For $\sigma\in[g_{eff,\pm}(\sigma_0),g_{eff,\pm}(\sigma_c)[$, there exists at least two stationary distributions with nonzero mean.} Because $g_{eff,\pm}$ is a bijection, consider $g_{eff,\pm}^{-1}(\sigma)\in[\sigma_0,\sigma_c[$. There are three stationary distribution for \eqref{eq:NL} with diffusion coefficient $g_{eff,\pm}^{-1}(\sigma)$. By Lemma~\ref{lem:equiv_stat_NL_Eff}, $\mu_{g_{eff,\pm}^{-1}(\sigma),+}$ and $\mu_{g_{eff,\pm}^{-1}(\sigma),-}$ are also stationary distributions for \eqref{eq:Eff} with diffusion coefficient $\sigma$, and they have nonzero mean.

    \item \textbf{For $\sigma\in[g_{eff,\pm}(\sigma_0),g_{eff,\pm}(\sigma_c)[$, there exists at most two stationary distributions with nonzero mean.} By symmetry, it is sufficient to prove that there is at most one stationary distribution with positive mean. Assume there are two such solutions $\mu^{\delta,p}_{\sigma,+,1}$ and $\mu^{\delta,p}_{\sigma,+,2}$.
    
    Let, for $i\in\{1,2\}$,  $\sigma_i=\sigma+\frac{\delta L_W^2}{2(p-1)}\kappa_2(\mu^{\delta,p}_{\sigma,+,i})$. Then, $\mu^{\delta,p}_{\sigma,+,i}$ is a stationary distribution with a positive mean for \eqref{eq:NL} with diffusion coefficient $\sigma_i$, i.e $\mu^{\delta,p}_{\sigma,+,i}=\mu_{\sigma_i,+}$.
    
    Thus $\sigma=\sigma_i-\frac{\delta L_W^2}{2(p-1)}\kappa_2(\mu^{\delta,p}_{\sigma,+,i})=\sigma_i-\frac{\delta L_W^2}{2(p-1)}\kappa_2(\mu_{\sigma_i,+})=g_{eff,+}(\sigma_i)$. Since $g_{eff,+}$ is an injective function, we obtain that $\sigma_1=\sigma_2$. In particular, $\mu^{\delta,p}_{\sigma,+,1}$ and $\mu^{\delta,p}_{\sigma,+,2}$ are two stationary distribution with a positive mean for \eqref{eq:NL} with diffusion coefficient $\sigma_1=\sigma_2$, thus by uniqueness $\mu^{\delta,p}_{\sigma,+,1}=\mu^{\delta,p}_{\sigma,+,2}$.
    
    \item \textbf{For $\sigma\geq g_{eff,+}(\sigma_c)$, there does not exists stationary distribution with nonzero mean}. The result is direct if $\sigma\geq\sigma_c$, because if $\mu$ is a stationary measure for \eqref{eq:Eff} with diffusion coefficient $\sigma$, it is is a stationary measure for \eqref{eq:NL} with diffusion coefficient $\sigma+\frac{\delta L_W^2}{2(p-1)}\kappa_2(\mu)\geq\sigma>\sigma_c$, hence it cannot have a nonzero mean.
    
    Assume $\sigma_c>\sigma\geq g_{eff,+}(\sigma_c)>\sigma_0$ and that there exists such a solution $\mu^{\delta,p}_{\sigma,+}$. Consider 
    \begin{equation}\label{eq:surcrit_1}
        \sigma'=\sigma+\frac{\delta L_W^2}{2(p-1)}\kappa_2(\mu^{\delta,p}_{\sigma,+}),
    \end{equation}
    such that $\mu^{\delta,p}_{\sigma,+}=\mu_{\sigma',+}$ is a stationary distribution with positive mean for \eqref{eq:NL}. We thus necessarily have $\sigma'<\sigma_c$. Let 
    \begin{equation}\label{eq:surcrit_2}
        \tilde{\sigma}=g_{eff,+}(\sigma')=\sigma'-\frac{\delta L_W^2}{2(p-1)}\kappa_2(\mu_{\sigma',+}).
    \end{equation}    
    We obtain $\mu^{\delta,p}_{\sigma,+}=\mu_{\sigma',+}=\mu^{\delta,p}_{\tilde{\sigma},+}$. Since $g_{eff,+}$ is increasing we have $g_{eff,+}(\sigma_c)>g_{eff,+}(\sigma')=\tilde{\sigma}$, we obtain from \eqref{eq:surcrit_2}
    \begin{align*}
        g_{eff,+}(\sigma_c)>&\sigma'-\frac{\delta L_W^2}{2(p-1)}\kappa_2(\mu_{\sigma',+})=\sigma'-\frac{\delta L_W^2}{2(p-1)}\kappa_2(\mu^{\delta,p}_{\sigma,+}),\\
        \text{ i.e }\ \ \ \kappa_2(\mu^{\delta,p}_{\sigma,+})>&\frac{2(p-1)}{\delta L_W^2}\left(\sigma'-g_{eff,+}(\sigma_c)\right).
    \end{align*}
    Plugging that back into \eqref{eq:surcrit_1}, we obtain
    \begin{align*}
        \sigma'>\sigma+\sigma'-g_{eff,+}(\sigma_c),\ \ \ \text{ i.e }\ \ \ \sigma<g_{eff,+}(\sigma_c),
    \end{align*}
    which contradicts the initial assumption.
\end{itemize}
This concludes the proof of Theorem~\ref{thm:phase_trans_eff}.

%
%

\newpage

\appendix

%
%

\section{Technical lemmas}\label{app:technical}

We start with this slight extension of the central limit theorem: 


\begin{lemma}\label{lem:cv_mom}
Let $(X_i)_{i\geq 1}$ be a sequence of i.i.d random variables in $\mathbb{R}$ such that $\mathbb{E}X_1=0$ and $\mathbb{E}(|X_1|^2)=1$. Assume also that $\mathbb{E}(|X_1|^4)<\infty$. Denote $Z_p=\frac{1}{\sqrt{p}}\sum_{i=1}^{p}X_i$.

Then, we have for $Z\sim\mathcal{N}(0,1)$
\begin{align*}
\mathbb{E}\left(|Z_p|^2\right)=\mathbb{E}\left(|Z|^2\right),\ \ \ \text{ and }\ \ \ \mathbb{E}\left(|Z_p|^4\right)=\mathbb{E}\left(|Z|^4\right)+O\left(\frac{1}{p}\right).
\end{align*}
This in particular also yields $\mathbb{E}\left(|Z_p|\right)\xrightarrow[p\rightarrow\infty]{}\mathbb{E}\left(|Z|\right)$ and $\mathbb{E}\left(|Z_p|^3\right)\xrightarrow[p\rightarrow\infty]{}\mathbb{E}\left(|Z|^3\right)$
\end{lemma}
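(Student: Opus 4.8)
The plan is to prove the two exact/asymptotic moment identities by direct computation, and then deduce convergence of the first and third absolute moments by a uniform integrability argument together with the ordinary central limit theorem. First I would establish $\mathbb{E}(|Z_p|^2) = 1$: since the $X_i$ are i.i.d., centered, with unit variance, expanding $Z_p^2 = \frac1p \sum_{i,j} X_i X_j$ and taking expectations kills all off-diagonal terms, leaving $\frac1p \sum_i \mathbb{E}(X_i^2) = 1 = \mathbb{E}(|Z|^2)$. This is exact, no error term.

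Next I would compute $\mathbb{E}(|Z_p|^4) = \mathbb{E}(Z_p^4)$. Expanding $Z_p^4 = \frac{1}{p^2}\sum_{i,j,k,l} X_i X_j X_k X_l$ and using that each $X_i$ is centered, the only surviving index patterns are those in which indices are matched in pairs: either all four equal (giving $p$ terms, each contributing $\mathbb{E}(X_1^4)$, hence $\frac{p}{p^2}\mathbb{E}(X_1^4) = O(1/p)$ since $\mathbb{E}(X_1^4) < \infty$) or two distinct pairs (there are $3p(p-1)$ such ordered tuples, each contributing $(\mathbb{E}(X_1^2))^2 = 1$, hence $\frac{3p(p-1)}{p^2} = 3 - 3/p$). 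Summing, $\mathbb{E}(Z_p^4) = 3 + O(1/p) = \mathbb{E}(|Z|^4) + O(1/p)$, where I use $\mathbb{E}(Z^4) = 3$ for $Z \sim \mathcal{N}(0,1)$. The assumption $\mathbb{E}(|X_1|^4) < \infty$ is exactly what makes the all-equal term genuinely $O(1/p)$ with a constant independent of $p$.

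Finally, for the convergence statements $\mathbb{E}(|Z_p|) \to \mathbb{E}(|Z|)$ and $\mathbb{E}(|Z_p|^3) \to \mathbb{E}(|Z|^3)$, I would invoke the classical CLT, which gives $Z_p \Rightarrow Z$ in distribution, hence $|Z_p| \Rightarrow |Z|$ and $|Z_p|^3 \Rightarrow |Z|^3$. To upgrade weak convergence to convergence of these moments, it suffices to check uniform integrability of the families $(|Z_p|)_p$ and $(|Z_p|^3)_p$; this follows because the fourth moments $\mathbb{E}(|Z_p|^4)$ are uniformly bounded (by the previous step), so $(|Z_p|)_p$ is bounded in $L^4$ and $(|Z_p|^3)_p$ is bounded in $L^{4/3}$, and $L^r$-boundedness for $r>1$ implies uniform integrability. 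Then the standard fact that weak convergence plus uniform integrability implies convergence of expectations yields the claim.

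The only mildly delicate point — and the one I would be most careful about — is the combinatorial bookkeeping in the fourth-moment expansion: correctly counting that there are $3p(p-1)$ ordered quadruples splitting into two matched pairs (the factor $3$ being the number of pairings of $\{1,2,3,4\}$ into two pairs), and confirming that every other index pattern contains a singleton index and hence vanishes by independence and centering. Everything else is routine once these counts are pinned down, and the $\mathbb{E}(|X_1|^4)<\infty$ hypothesis is used precisely to control the diagonal contribution uniformly in $p$.
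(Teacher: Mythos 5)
Your proof is correct. The moment computations are exactly those of the paper: the second moment is exact by expanding the square and killing cross terms, and the fourth moment expansion with the diagonal term $\frac{1}{p}\mathbb{E}(|X_1|^4)$ plus the $3p(p-1)/p^2$ contribution from matched pairs is the same bookkeeping the paper does (your count of $3p(p-1)$ ordered quadruples agrees with the paper's $6\binom{p}{2}$). The only place you diverge is the last step: to pass from the convergence in law given by the CLT to convergence of the first and third absolute moments, you argue directly via uniform integrability ($(|Z_p|)_p$ bounded in $L^4$ and $(|Z_p|^3)_p$ bounded in $L^{4/3}$, hence both families are uniformly integrable, and weak convergence plus uniform integrability gives convergence of expectations), whereas the paper invokes the characterization of Wasserstein convergence (weak convergence together with convergence of fourth moments implies convergence in the $L^4$ Wasserstein distance, citing Villani, which then dominates the $L^1$ and $L^3$ Wasserstein distances and hence gives the first and third moments). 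The two routes rest on the same ingredient, namely the uniform fourth-moment bound you established, and are essentially equivalent since the cited Wasserstein equivalence is itself proved by a uniform integrability argument; your version has the modest advantage of being self-contained and not requiring a reference to optimal transport, while the paper's is a one-line citation.
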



\begin{proof}
Direct computations give $\mathbb{E}\left(|Z_p|^2\right)=\mathbb{E}\left(|Z|^2\right)$. 
Likewise, we may explicitly compute $\mathbb{E}|Z_p|^4$. Keeping only the terms with nonzero expectation, we have
\begin{align*}
    \mathbb{E}|Z_p|^4=&\frac{1}{p^2}\sum_{i=1}^p\mathbb{E}|X_i|^4+\frac{6}{p^2}\sum_{i>j}\mathbb{E}|X_i|^2\mathbb{E}|X_j|^2\\
    =&\frac{\mathbb{E}|X_1|^4}{p}+\frac{6}{p^2}\frac{p(p-1)}{2}.
\end{align*}
Noticing that $\mathbb{E}|Z|^4=3$ yields the convergence $\mathbb{E}\left(|Z_p|^4\right)=\mathbb{E}\left(|Z|^4\right)+O\left(\frac{1}{p}\right)$.
Thus, we have both 
\begin{itemize}
    \item $Z_p$ converges in law to $Z\sim\mathcal{N}(0,1)$,
    \item and the convergence of the fourth moment $\mathbb{E}\left|Z_{p}\right|^4\xrightarrow[p\rightarrow\infty]{}\mathbb{E}\left|Z\right|^4$.
\end{itemize}
By \cite[Theorem~6.9]{villani2008optimal}, we have the convergence in $L^4$ Wasserstein distance of the law of $Z_p$ to a law $\mathcal{N}(0,1)$. This implies the convergence in both $L^1$ Wasserstein distance and $L^3$ Wasserstein distance, thus the convergence of the first and third moments of $Z_p$.
\end{proof}


\begin{lemma}\label{lem:der_3}
The function $\sigma\in]0,\infty[\mapsto\partial_{\kappa}^3f_1(\sigma,0)$ (with $f_1$ given by \eqref{eq:def_f1}) is continuous and satisfies
\begin{equation*}
    \forall\sigma>0,\ \ \ \partial_{\kappa}^3f_1(\sigma,0)<0.
\end{equation*}
\end{lemma}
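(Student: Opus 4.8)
The plan is to compute $\partial_\kappa^3 f_1(\sigma,0)$ explicitly in terms of the centered moments of the Gibbs-type measure with density proportional to $g(x,\sigma,0) = \exp\left(-\frac{1}{\sigma}(U(x) + \frac{L_W}{2}x^2)\right)$. Write $V_\kappa(x) = U(x) + \frac{L_W}{2}(x-\kappa)^2$, so that $g(x,\sigma,\kappa) = e^{-V_\kappa(x)/\sigma}$, and note $\partial_\kappa V_\kappa(x) = -L_W(x-\kappa)$, hence $\partial_\kappa g = \frac{L_W}{\sigma}(x-\kappa) g$. Denoting by $\langle \cdot \rangle_\kappa$ the average against the normalized measure $g(\cdot,\sigma,\kappa)/\int g$, one has $f_1(\sigma,\kappa) = \langle x \rangle_\kappa$, and differentiating in $\kappa$ repeatedly produces cumulant-type expressions: each $\partial_\kappa$ brings down a factor $\frac{L_W}{\sigma}(x-\kappa)$ and subtracts its mean. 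At $\kappa = 0$ the reference measure is even (since $U$ is even and $\frac{L_W}{2}x^2$ is even), so all odd moments vanish, which will kill many terms.

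**The key identity.** First I would record the first three $\kappa$-derivatives of $f_1$ at a general $\kappa$ in terms of connected correlations of $x$ with powers of $(x-\kappa)$, i.e.
\begin{align*}
\partial_\kappa f_1 &= \tfrac{L_W}{\sigma}\,\mathrm{Cov}_\kappa(x, x-\kappa),\\
\partial_\kappa^2 f_1 &= \tfrac{L_W^2}{\sigma^2}\,\kappa_3\text{-type term},\\
\partial_\kappa^3 f_1 &= \tfrac{L_W^3}{\sigma^3}\,\kappa_4\text{-type term},
\end{align*}
and then specialize to $\kappa = 0$. Because the measure at $\kappa=0$ is symmetric, $\langle x \rangle_0 = 0$, $\langle x^3 \rangle_0 = 0$, and $\partial_\kappa f_1(\sigma,0) = \frac{L_W}{\sigma}\langle x^2\rangle_0$, $\partial_\kappa^2 f_1(\sigma,0) = 0$. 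The third derivative reduces, after collecting terms, to a fixed constant times
\[
\partial_\kappa^3 f_1(\sigma,0) \;=\; \frac{L_W^3}{\sigma^3}\Bigl(\langle x^4\rangle_0 - 3\langle x^2\rangle_0^2 - \,(\text{lower, sign-definite corrections})\Bigr),
\]
but I expect the clean outcome to be $\partial_\kappa^3 f_1(\sigma,0) = \frac{L_W^3}{\sigma^3}\bigl(\langle x^4\rangle_0 - 3\langle x^2\rangle_0^2\bigr) + (\text{extra terms from }\partial_\kappa V)$. The point is that this is (a positive multiple of) the fourth cumulant of the symmetric measure $g(\cdot,\sigma,0)$. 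So the whole lemma boils down to: the fourth cumulant of the probability measure with density $\propto \exp\bigl(-\frac{1}{\sigma}(\frac{x^4}{4} - \frac{x^2}{2} + \frac{L_W}{2}x^2)\bigr)$ is strictly negative for every $\sigma>0$.

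**Negativity of the fourth cumulant.** The density $\propto e^{-\phi(x)}$ with $\phi(x) = \frac{1}{\sigma}(\frac{x^4}{4} + \frac{L_W-1}{2}x^2)$ has $\phi''''(x) = \frac{3}{\sigma} > 0$, i.e. $\log$-density's fourth derivative... more to the point, I would invoke the fact that a measure of the form $e^{-c x^4 - b x^2}\,dx$ with $c>0$ has negative excess kurtosis. The cleanest route: use the Gaussian correlation / Brascamp–Lieb type argument, or a direct argument via the identity that for $d\mu \propto e^{-\phi}$ the fourth cumulant $\kappa_4 = \langle x^4\rangle_c$ satisfies an integration-by-parts formula making its sign controllable; alternatively, diagonalize by noting $e^{-cx^4-bx^2}$ is a scale mixture / can be compared to a Gaussian. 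The concrete plan is: reduce by scaling $x \mapsto \lambda x$ to the one-parameter family $e^{-x^4/4 - t x^2/2}$ (absorbing $\sigma$ and $L_W$ into $t = \sigma^{-1}(L_W-1)\cdot\lambda^2$, $t \in \mathbb{R}$), then show $\langle x^4 \rangle - 3\langle x^2\rangle^2 < 0$ for all real $t$ — for $t \geq 0$ this follows because the density is more concentrated than Gaussian (log-concave with $\phi'' \geq $ constant, so Brascamp–Lieb gives $\langle x^2\rangle_c$ bounds, but for the fourth cumulant one uses that $-\log$(density) has nonnegative fourth derivative, a known sufficient condition for $\kappa_4 \leq 0$); for $t < 0$ (the double-well-dominated regime) the sign is less obvious and this is the main obstacle.

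**Main obstacle.** The genuinely delicate part is proving $\kappa_4 < 0$ uniformly in $\sigma$, including small $\sigma$ where the effective potential $\frac{1}{\sigma}(\frac{x^4}{4} + \frac{L_W-1}{2}x^2)$ may be a double well (if $L_W < 1$) and the measure becomes bimodal — bimodality is exactly the situation where fourth cumulants can flirt with positivity. I would handle this by the substitution reducing to the normalized family $\nu_t \propto e^{-x^4/4 - tx^2/2}$ and then proving $h(t) := \langle x^4\rangle_{\nu_t} - 3\langle x^2\rangle_{\nu_t}^2 < 0$ for all $t\in\mathbb R$: compute $h'(t)$ via differentiating the moments in $t$ (each $\partial_t$ inserts $-\frac12(x^2 - \langle x^2\rangle)$, producing connected moments), show $h$ is monotone or has no interior zero by establishing a differential inequality, and check the two limits $t\to+\infty$ (Gaussian-like, $h\to 0^-$) and $t\to-\infty$ (two well-separated near-Gaussian bumps at $\pm\sqrt{-t}$, where $\langle x^2\rangle \approx -t$ and $\langle x^4\rangle \approx t^2$, giving $\kappa_4 \approx t^2 - 3t^2 = -2t^2 < 0$). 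The monotonicity step — showing $h$ cannot become nonnegative in between — is where the real work lies; if a slick differential inequality is unavailable, a careful quantitative moment expansion around both asymptotic regimes plus a compactness argument on the remaining bounded range of $t$ will close it.
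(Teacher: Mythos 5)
Your reduction is the same as the paper's first step: since $g(\cdot,\sigma,0)$ is even, the odd moments vanish, $\partial_\kappa^2 f_1(\sigma,0)=0$, and the third derivative is exactly $\left(\tfrac{L_W}{\sigma}\right)^3\left(\langle x^4\rangle_0-3\langle x^2\rangle_0^2\right)$ with no further correction terms, so the lemma is equivalent to strict negativity of the excess kurtosis of the symmetric measure proportional to $\exp\left(-\tfrac{1}{\sigma}\left(\tfrac{x^4}{4}+\tfrac{L_W-1}{2}x^2\right)\right)$. Continuity in $\sigma$ is routine. The problem is the negativity step, and there your argument has a genuine gap. For $t\geq 0$ you lean on the assertion that nonnegativity of the fourth derivative of $-\log$(density) is ``a known sufficient condition for $\kappa_4\leq 0$''; this is not a standard result and you give no proof (note that log-concavity alone certainly does not suffice: the Laplace distribution is log-concave with excess kurtosis $+3$). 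More importantly, in the bimodal regime you only compute the two asymptotic limits $t\to\pm\infty$ and then propose to ``close'' the intermediate range of $t$ by ``a compactness argument on the remaining bounded range''. Compactness only tells you that $h(t)=\langle x^4\rangle_{\nu_t}-3\langle x^2\rangle_{\nu_t}^2$ attains its maximum on that interval; it says nothing about the sign of that maximum, and no differential inequality is actually exhibited. So the crucial middle range — precisely where bimodality could push the kurtosis up — is left unproven.

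For comparison, the paper performs a different one-parameter reduction: it completes the square inside the exponent so that the kurtosis ratio $A(\sigma,L_W)$ becomes a ratio of expectations of $\exp\left(-\beta\left(X^2-1\right)^2\right)$-weighted moments of a standard Gaussian $X$, i.e. a function $A(\beta)$ of a single parameter $\beta(\sigma,L_W)\in\,]0,\infty[$ as in \eqref{eq:A_beta}, and then settles $A(\beta)<3$ by a numerical check (Figure~\ref{Fig:A_beta}), with an analytic verification only at the boundary ($A(0)=3$, $A'(0)=-24<0$). In other words, the hard intermediate regime is exactly the point the paper resolves by numerical verification; your sketch neither proves it analytically nor supplies such a verification, so as written it does not establish the lemma. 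A fully analytic route would require a genuine inequality for quartic single-site measures $\propto e^{-ax^4-bx^2}$, $a>0$ (nonpositivity of the fourth cumulant in the Griffiths--Simon class, upgraded to a strict inequality), which neither your proposal nor the paper provides.
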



\begin{proof}
We have
\begin{align*}
    \partial_{\kappa}^3f_1(\sigma,0)=&\left(\frac{L_W}{\sigma}\right)^3\left(\frac{\int_\mathbb{R}x^4 g}{\int_\mathbb{R} g}-3\left(\frac{\int_\mathbb{R}x^2 g}{\int_\mathbb{R} g}\right)^2\right)\,,
\end{align*}
with $g$ given by \eqref{eq:def_g}. We wish to prove
\begin{align*}
    A(\sigma,L_W):=\frac{\int_\mathbb{R}x^4 g\int_\mathbb{R} g}{\left(\int_\mathbb{R}x^2 g\right)^2}<3.
\end{align*}
Remark that $A(\sigma,L_W)$ is by definition the kurtosis of a random variable with probability density $\frac{g}{\int g}$. 

Let us rewrite, for $\alpha>0$
\begin{align*}
    U(x)+\frac{L_W}{2}x^2=&\frac{x^4}{4}+\frac{L_W-1}{2}x^2\\
    =&\frac{x^4}{4}+\frac{L_W-1-\alpha}{2}x^2+\left(\frac{L_W-1-\alpha}{2}\right)^2+\frac{\alpha}{2}x^2-\left(\frac{L_W-1-\alpha}{2}\right)^2\\
    =&\frac{1}{4}\left(x^2+L_W-1-\alpha\right)^2+\frac{\alpha}{2}x^2-\left(\frac{L_W-1-\alpha}{2}\right)^2,
\end{align*}
such that
\begin{align*}
    g(x,\sigma,0)=\exp\left(-\frac{1}{4\sigma}\left(x^2+L_W-1-\alpha\right)^2\right)\exp\left(\plb{\frac{1}{\sigma}}\left(\frac{L_W-1-\alpha}{2}\right)^2\right)e^{-\frac{\alpha}{2\sigma}x^2}.
\end{align*}
This way we can write
\begin{align*}
    A(\sigma,L_W)=&\frac{\mathbb{E}\left(\exp\left(-\frac{1}{4\sigma}\left(Y^2+L_W-1-\alpha\right)^2\right)\right)\mathbb{E}\left(Y^4\exp\left(-\frac{1}{4\sigma}\left(Y^2+L_W-1-\alpha\right)^2\right)\right)}{\mathbb{E}\left(Y^2\exp\left(-\frac{1}{4\sigma}\left(Y^2+L_W-1-\alpha\right)^2\right)\right)^2},\\
    &\hspace{8cm}\text{ with } \ Y\sim\mathcal{N}\left(0,\frac{\sigma}{\alpha}\right),\\
    =&\frac{\mathbb{E}\left(\exp\left(-\frac{1}{4\sigma}\left(\frac{\sigma}{\alpha}X^2+L_W-1-\alpha\right)^2\right)\right)\mathbb{E}\left(X^4\exp\left(-\frac{1}{4\sigma}\left(\frac{\sigma}{\alpha}X^2+L_W-1-\alpha\right)^2\right)\right)}{\mathbb{E}\left(X^2\exp\left(-\frac{1}{4\sigma}\left(\frac{\sigma}{\alpha}X^2+L_W-1-\alpha\right)^2\right)\right)^2},\\
    &\hspace{8cm}\text{ with } \ X\sim\mathcal{N}\left(0,1\right).
\end{align*}
We have
\begin{align*}
    -\frac{1}{4\sigma}\left(\frac{\sigma}{\alpha}X^2+L_W-1-\alpha\right)^2=-\frac{1}{4\sigma}\left(\frac{\sigma}{\alpha}\right)^2\left(X^2+\frac{\alpha\left(L_W-1-\alpha\right)}{\sigma}\right)^2.
\end{align*}
We thus choose $\alpha=\frac{1}{2}\left(L_W-1+\sqrt{(L_W-1)^2+4\plb{\sigma}}\right)>0$ in order to ensure $\frac{\alpha\left(L_W-1-\alpha\right)}{\sigma}=-1$. Finally, denoting \[\beta(\sigma,L_W)=\plb{\left(\frac{L_W-1}{\sqrt{\sigma}}+\sqrt{\left(\frac{L_W-1}{\sqrt{\sigma}}\right)^2+4}\right)^{-2}}>0\,,\]
we have
\begin{align}
    A(\sigma,L_W)=&\frac{\mathbb{E}\left(\exp\left(-\beta(\sigma,L_W)\left(X^2-1\right)^2\right)\right)\mathbb{E}\left(X^4\exp\left(-\beta(\sigma,L_W)\left(X^2-1\right)^2\right)\right)}{\mathbb{E}\left(X^2\exp\left(-\beta(\sigma,L_W)\left(X^2-1\right)^2\right)\right)^2},\ \ \ X\sim\mathcal{N}\left(0,1\right)\nonumber\\
    :=&A(\beta(\sigma,L_W))\label{eq:A_beta}.
\end{align}
The quantity $A$ given above can be expressed as a function of $\beta(\sigma,L_W)\in]0,\infty[$, that we denote $A(\beta)$. We may then numerically check that $A(\beta)<3$ for all $\beta>0$. (see Figure~\ref{Fig:A_beta}). 

Notice that the function $\beta(\sigma,L_W)$, which is in reality a function of the quantity $\frac{L_W-1}{\sqrt{\sigma}}$, is a bijection from $\frac{L_W-1}{\sqrt{\sigma}}\in\mathbb{R}$ to $]0,\infty[$, which satisfies $\beta(\sigma,L_W)\xrightarrow[\frac{L_W-1}{\sqrt{\sigma}}\rightarrow\infty]{}0$. And, for $\beta(\sigma,L_W)=0$, direct calculations knowing the moments of the Gaussian law yield $A(0)=3$ and $A'(0)=\plb{-24}<0$.
\end{proof}

\begin{figure}
\centering
\includegraphics[width=0.7\linewidth]{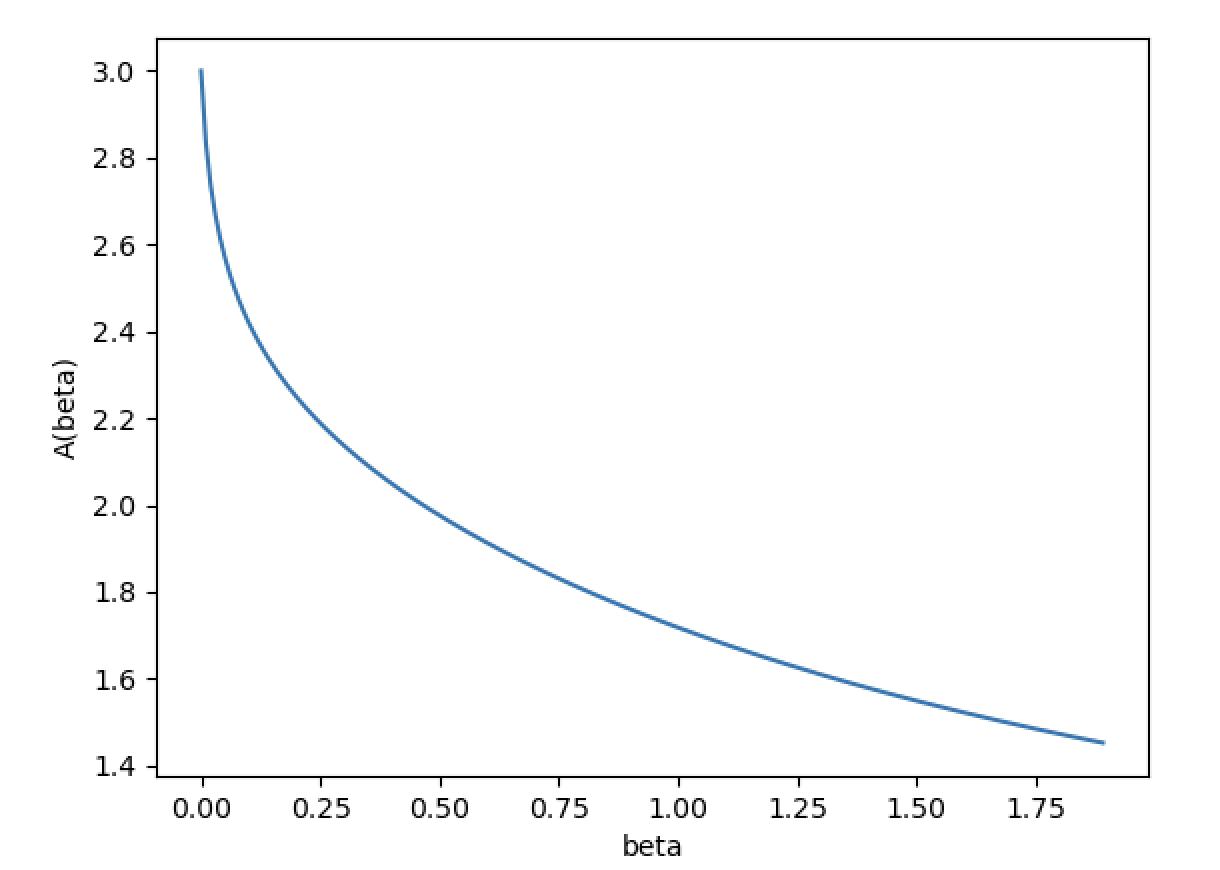}
\caption{Numerical simulation of the quantity given in \eqref{eq:A_beta}.}
\label{Fig:A_beta}
\end{figure}

%
%

\begin{lemma}\label{lem:d_sigma_d_m_xi}
Consider the function $F_1:\sigma\mapsto \partial_\kappa f_1(\sigma,0)$. It is continuously differentiable and, for $\sigma>0$, satisfies  $F_1'(\sigma)<0$.
\end{lemma}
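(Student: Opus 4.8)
The function is $F_1(\sigma)=\partial_\kappa f_1(\sigma,0)$, where $f_1(\sigma,\kappa)=\frac{\int x\,g(x,\sigma,\kappa)\,dx}{\int g(x,\sigma,\kappa)\,dx}$ and $g(x,\sigma,\kappa)=\exp(-\frac1\sigma(U(x)+\frac{L_W}2|x-\kappa|^2))$. First I would compute $\partial_\kappa f_1(\sigma,0)$ explicitly. Since $\partial_\kappa g = \frac{L_W}\sigma (x-\kappa) g$, differentiating the ratio and evaluating at $\kappa=0$ (where, by symmetry of $g(\cdot,\sigma,0)$, all odd moments vanish) gives
\begin{equation*}
F_1(\sigma)=\partial_\kappa f_1(\sigma,0)=\frac{L_W}{\sigma}\,\frac{\int x^2 g(x,\sigma,0)\,dx}{\int g(x,\sigma,0)\,dx}=\frac{L_W}{\sigma}\,m_2(\sigma),
\end{equation*}
where $m_2(\sigma)$ denotes the second moment of the probability density $g(\cdot,\sigma,0)/\int g(\cdot,\sigma,0)$. (This is exactly $\frac{L_W}{\sigma}f_2(\sigma,0)$, linking it to Lemma~\ref{lem:res_NL}, since the fixed point $\kappa=0$ corresponds to the symmetric stationary law and $m_2(\sigma_c)=\sigma_c/L_W$ gives $F_1(\sigma_c)=1$, which is the nondegeneracy one expects at the bifurcation.)

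\textbf{Continuity and differentiability.} The integrand $g(x,\sigma,0)$ and its $\sigma$-derivative are dominated, locally uniformly in $\sigma$ on $]0,\infty[$, by integrable functions of $x$ (the quartic growth of $U$ ensures super-Gaussian decay, and any polynomial prefactor times $g$ stays integrable with locally uniform domination away from $\sigma=0$). Hence $\sigma\mapsto \int x^k g(x,\sigma,0)\,dx$ is $C^1$ on $]0,\infty[$ for every $k$, and since the denominator never vanishes, $F_1$ is $C^1$ on $]0,\infty[$ by the quotient rule and the chain rule. I would state this domination once and reuse it.

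\textbf{The sign of $F_1'$.} Writing $F_1(\sigma)=\frac{L_W}{\sigma}m_2(\sigma)$ we get
\begin{equation*}
F_1'(\sigma)=\frac{L_W}{\sigma^2}\bigl(\sigma m_2'(\sigma)-m_2(\sigma)\bigr),
\end{equation*}
so it suffices to show $\sigma m_2'(\sigma)<m_2(\sigma)$ for all $\sigma>0$. Here $m_2'(\sigma)$ is obtained from the log-derivative identity $\partial_\sigma g = \frac1{\sigma^2}(U(x)+\frac{L_W}2 x^2)\,g$: denoting $V(x)=U(x)+\frac{L_W}2 x^2=\frac{x^4}{4}+\frac{L_W-1}{2}x^2$ and $\langle\cdot\rangle$ the average against $g(\cdot,\sigma,0)/\int g$, one finds
\begin{equation*}
m_2'(\sigma)=\frac{1}{\sigma^2}\bigl(\langle x^2 V\rangle-\langle x^2\rangle\langle V\rangle\bigr)=\frac{1}{\sigma^2}\,\mathrm{Cov}(x^2,V).
\end{equation*}
Thus $\sigma m_2'(\sigma)-m_2(\sigma)=\frac1\sigma\mathrm{Cov}(x^2,V)-\langle x^2\rangle = \frac{1}{4\sigma}\mathrm{Cov}(x^2,x^4)+\frac{L_W-1}{2\sigma}\mathrm{Var}(x^2)-\langle x^2\rangle$, and I want this to be negative. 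The covariance term is not sign-definite when $L_W<1$, so a bare correlation-inequality argument does not close immediately; the clean route is to reuse the reparametrization of Lemma~\ref{lem:der_3}: the density $g(\cdot,\sigma,0)/\int g$ equals, after the substitution used there, the law of $X$ with density $\propto \exp(-\beta(\sigma,L_W)(x^2-1)^2)$ times a Gaussian weight — concretely one shows $m_2(\sigma)$ is, up to an explicit positive affine change of variables in $\sigma$, a smooth function of the single parameter $\beta=\beta(\sigma,L_W)$, and then the inequality $\sigma m_2'(\sigma)<m_2(\sigma)$ becomes a one-variable statement in $\beta\in]0,\infty[$, verifiable by the same endpoint analysis as in Lemma~\ref{lem:der_3} ($A(0)=3$, $A'(0)<0$) together with the numerical check already displayed in Figure~\ref{Fig:A_beta}. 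I expect the main obstacle to be precisely this: turning the two-parameter dependence on $(\sigma,L_W)$ into the one-parameter dependence on $\beta$ and carefully tracking that the affine reparametrization of $\sigma$ has positive derivative (so the sign of $F_1'$ is preserved), after which the strict inequality follows from the monotonicity/positivity facts already established for the $\beta$-problem. An alternative, fully self-contained route — GHS- or FKG-type correlation inequalities for the single-site measure $\propto e^{-V/\sigma}$ — would avoid Figure~\ref{Fig:A_beta} but requires checking a convexity/sign condition on $V$ that fails for $L_W<1$, so I would not pursue it.
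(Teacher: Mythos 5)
Your computation of $F_1(\sigma)=\frac{L_W}{\sigma}m_2(\sigma)$ and the differentiability argument by domination are fine, and you correctly diagnose that the naive covariance decomposition of $\sigma m_2'(\sigma)-m_2(\sigma)$ is not sign-definite term by term. But the repair you propose does not close the argument. First, the collapse to the single parameter $\beta(\sigma,L_W)$ in Lemma~\ref{lem:der_3} works because the kurtosis is scale invariant; $m_2$ is not, so after the completion-of-square reparametrization you still carry a $\sigma$-dependent scale factor, and the inequality $\sigma m_2'(\sigma)<m_2(\sigma)$ does not become a one-variable statement in $\beta$. Second, the numerical check in Figure~\ref{Fig:A_beta} verifies $A(\beta)<3$, i.e.\ a fourth-moment (kurtosis) bound used for $\partial_\kappa^3 f_1(\sigma_c,0)<0$; the inequality you need here involves the $\sigma$-derivative (equivalently sixth moments), so that figure cannot simply be reused, and even if a new numerical check were added, the lemma would then rest on numerics, unlike the statement as the paper proves it.

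The missing idea is a much simpler change of variables: substitute $x=\sqrt{\sigma}\,y$ \emph{before} differentiating. This gives
\begin{equation*}
F_1(\sigma)=L_W\,\frac{\int y^2\exp\left(-\tfrac{\sigma}{4}y^4+\tfrac{1-L_W}{2}y^2\right)dy}{\int \exp\left(-\tfrac{\sigma}{4}y^4+\tfrac{1-L_W}{2}y^2\right)dy}=L_W\,\mathbb{E}\left(Y_\sigma^2\right),
\end{equation*}
where only the quartic coefficient depends on $\sigma$. Differentiating under the integral sign then yields
\begin{equation*}
F_1'(\sigma)=\frac{L_W}{4}\left(\mathbb{E}\left(Y_\sigma^2\right)\mathbb{E}\left(Y_\sigma^4\right)-\mathbb{E}\left(Y_\sigma^6\right)\right)=-\frac{L_W}{4}\,\mathrm{Cov}\left(Y_\sigma^2,Y_\sigma^4\right)<0,
\end{equation*}
since $Y_\sigma^2$ and $Y_\sigma^4=(Y_\sigma^2)^2$ are strictly positively correlated for a non-degenerate $Y_\sigma$. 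Equivalently, in your notation, $\sigma m_2'(\sigma)-m_2(\sigma)=\sigma^2\frac{d}{d\sigma}\mathbb{E}(Y_\sigma^2)<0$. This is exactly the paper's proof: fully analytic, no reduction to $\beta$ and no numerical verification needed, and it is the natural completion of the route you started.
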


%
%

\begin{proof}
We have
\begin{align*}
    F_1(\sigma)=\frac{L_W}{\sigma}\frac{\int x^2\exp\left(-\frac{x^4}{4\sigma}+\frac{1-L_W}{2\sigma}x^2\right)dx}{\int \exp\left(-\frac{x^4}{4\sigma}+\frac{1-L_W}{2\sigma}x^2\right)dx}.
\end{align*}
Consider the change of variable $y=\frac{x}{\sqrt{\sigma}}$. We have
\begin{align*}
    F_1(\sigma)=&\frac{L_W}{\sigma}\frac{\int \sigma y^2\exp\left(-\sigma^2\frac{y^4}{4\sigma}+\frac{1-L_W}{2\sigma}\sigma y^2\right)dy}{\int \exp\left(-\sigma^2\frac{y^4}{4\sigma}+\frac{1-L_W}{2\sigma}\sigma y^2\right)dy}\\
    =&L_W\frac{\int y^2\exp\left(-\frac{\sigma}{4}y^4+\frac{1-L_W}{2}y^2\right)dy}{\int \exp\left(-\frac{\sigma}{4}y^4+\frac{1-L_W}{2}y^2\right)dy},
\end{align*}
which then yields
\begin{align*}
    F_1'(\sigma)=&\frac{L_W}{4}\left(-\mathbb{E}\left(Y^6\right)+\mathbb{E}\left(Y^2\right)\mathbb{E}\left(Y^4\right)\right),
\end{align*}
where $Y$ is a random variable with probability density (up to renormalization) $\exp\left(-\frac{\sigma}{4}y^4+\frac{1-L_W}{2}y^2\right)dy$. By Jensen inequality (in a strictly convex case with a non almost surely constant random variable), we have $F_1'(\sigma)<0$.
\end{proof}

%
%
%
%

\section{Proofs of Lemma~\ref{lem:res_NL} }\label{app:proofs_NL} 

In this section we prove the various results of Lemma~\ref{lem:res_NL}.

%
%
%
%

\subsection{Moment bounds, critical variance and continuity}

To prove the first properties stated in Lemma~\ref{lem:res_NL}, we recall the following result, extracted from Theorem~2.1 of \cite{tugaut} and its proof.


\begin{lemma}\label{lem:tech_tug}
The equation (with unknown $\sigma$)
\begin{equation}\label{eq:cdt_crit_tug}
    \int_{\mathbb{R}^+}\left(x^2-\frac{1}{2L_W}\right)\exp\left((1-L_W)x^2-\sigma x^4\right)dx=0,
\end{equation}
admits a unique solution, that is the critical value $\sigma_c$.

Finally, consider the function 
\begin{equation}\label{eq:def_xi}
    \xi(\sigma,\kappa)=\int_{\mathbb{R}}(x-\kappa)\exp\left(-\frac{1}{\sigma}\left(U(x)+\frac{L_W}{2}x^2-L_Wx\kappa\right)\right)dx.
\end{equation}
We have the following properties on $\xi$ :
\begin{itemize}
    \item The function $\sigma\mapsto\partial_\kappa\xi(\sigma,0)$ is decreasing : for $\sigma<\sigma_c$ we have $\partial_\kappa\xi(\sigma,0)>0$, for $\sigma>\sigma_c$ we have $\partial_\kappa\xi(\sigma,0)<0$, and finally $\partial_\kappa\xi(\sigma_c,0)=0$. 
    \item For $\sigma\geq\sigma_c$ and $\kappa\geq0$, the function $\kappa\mapsto\xi(\sigma,\kappa)$ is decreasing (which, in fact, ensures uniqueness of the stationary solution for \eqref{eq:NL}).
    \item For $\sigma<\sigma_c$ and $\kappa\geq0$, the function $\kappa\mapsto\xi(\sigma,\kappa)$ is increasing and then decreasing (which,in fact, ensures the thirdness of the stationary solution for \eqref{eq:NL}).
\end{itemize}
\end{lemma}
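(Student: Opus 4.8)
The plan is to extract everything from the structure already present in \cite{tugaut}, so the proof is mostly a matter of recasting that material in the notation of $\xi$ and checking signs by explicit differentiation. First I would handle the characterization of $\sigma_c$. A symmetric stationary distribution $\mu_{\sigma,0}$ always exists and has density proportional to $\exp\left(-\frac1\sigma\left(U(x)+\frac{L_W}{2}x^2\right)\right)$, i.e. proportional to $\exp\left(\frac1\sigma\left(\frac{1-L_W}{2}x^2-\frac{x^4}{4}\right)\right)$; after the rescaling $x\mapsto x/\sqrt{\sigma}$ used in Lemma \ref{lem:d_sigma_d_m_xi}, the branching of non-symmetric solutions from the symmetric one is governed exactly by when the self-consistency map $\kappa\mapsto f_1(\sigma,\kappa)$ has derivative $1$ at $\kappa=0$, equivalently $\partial_\kappa\xi(\sigma,0)=0$. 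Writing $\partial_\kappa\xi(\sigma,0)$ out from \eqref{eq:def_xi}, one gets (after the change of variable and up to a positive multiplicative constant) precisely $\int_{\mathbb R^+}\left(x^2-\frac{1}{2L_W}\right)\exp\left((1-L_W)x^2-\sigma x^4\right)dx$, so \eqref{eq:cdt_crit_tug} is just the equation $\partial_\kappa\xi(\sigma,0)=0$. Uniqueness of its root and the fact that it equals the critical value $\sigma_c$ is Theorem~2.1 of \cite{tugaut}.

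Next I would prove the three monotonicity properties of $\xi$. For the first, I would show $\sigma\mapsto\partial_\kappa\xi(\sigma,0)$ is strictly decreasing: differentiating the rescaled expression in $\sigma$ reduces, exactly as in the proof of Lemma \ref{lem:d_sigma_d_m_xi}, to a covariance of the form $\mathbb E(Y^2)\mathbb E(Y^4)-\mathbb E(Y^6)$ against the density proportional to $\exp\left(-\frac\sigma4 y^4+\frac{1-L_W}2 y^2\right)$ times a positive factor, and this is negative by Jensen (strict convexity of $t\mapsto t^3$, non-degenerate variable). Combined with $\partial_\kappa\xi(\sigma_c,0)=0$, this gives the sign statements for $\sigma\lessgtr\sigma_c$. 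For the second and third properties, the point is the behaviour of $\kappa\mapsto\xi(\sigma,\kappa)$ on $\kappa\geq0$: note $\xi(\sigma,0)=0$ by symmetry, and $\xi(\sigma,\kappa)\to-\infty$ as $\kappa\to\infty$ (the Gaussian-type weight $\exp(\frac{L_W}{\sigma}x\kappa - \frac{x^4}{4\sigma}-\cdots)$ concentrates mass at a value of $x$ bounded in $\kappa$ while the factor $(x-\kappa)$ goes to $-\infty$; this is the standard a priori bound from \cite{tugaut}). The key analytic input is that $\partial_\kappa^2\xi(\sigma,\kappa)$ changes sign at most once on $\kappa>0$, so $\partial_\kappa\xi$ is unimodal there; this is again drawn from \cite{tugaut} and follows from the fact that the conditional law has, after centering, a log-concave-up-to-one-sign-change structure. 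Granting that: if $\partial_\kappa\xi(\sigma,0)\leq0$ (case $\sigma\geq\sigma_c$) then $\partial_\kappa\xi<0$ on all of $\kappa>0$, so $\xi$ is decreasing and $0$ is its only zero; if $\partial_\kappa\xi(\sigma,0)>0$ (case $\sigma<\sigma_c$) then $\partial_\kappa\xi$ is first positive then negative, so $\xi$ increases then decreases, giving besides $\kappa=0$ exactly one further positive zero, and by the symmetry $\xi(\sigma,-\kappa)=-\xi(\sigma,\kappa)$ a total of three stationary solutions.

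The main obstacle I anticipate is the claim that $\kappa\mapsto\partial_\kappa\xi(\sigma,\kappa)$ is unimodal on $\kappa\geq0$ (equivalently that $\partial_\kappa^2\xi$ has a single sign change), which is what upgrades ``$\partial_\kappa\xi(\sigma,0)>0$'' to ``exactly three stationary solutions'' rather than merely ``at least three''. For the double-well quartic this is exactly the content of Tugaut's analysis, so I would cite Theorem~2.1 of \cite{tugaut} and its proof for it rather than reproving it; everything else (the identification of \eqref{eq:cdt_crit_tug} with $\partial_\kappa\xi(\sigma,0)=0$, the $\sigma$-monotonicity via Jensen, the $\kappa\to\infty$ asymptotics) is short and self-contained given the computations already performed in Lemma \ref{lem:d_sigma_d_m_xi} and Lemma \ref{lem:der_3}.
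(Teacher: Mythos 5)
The paper does not actually prove this lemma: it is stated as a result ``extracted from Theorem~2.1 of \cite{tugaut} and its proof'', so your overall strategy (defer the hard structural facts to Tugaut and only check the dictionary between his statement and the function $\xi$) is consistent with what the paper does, and your identification of \eqref{eq:cdt_crit_tug} with $\partial_\kappa\xi(\sigma,0)=0$ is correct: after the change of variables $x\mapsto x/\sqrt{2\sigma}$ one has, up to the positive factor $L_W/\sigma$ and symmetrization, exactly the integral in \eqref{eq:cdt_crit_tug}, which is the computation the paper itself performs when it derives $\kappa_2(\mu_{\sigma_c})=\sigma_c/L_W$.

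However, the two steps you present as self-contained are flawed as stated. First, $\xi$ is \emph{not} normalized, so differentiating $\partial_\kappa\xi(\sigma,0)$ in $\sigma$ does not reduce ``exactly as in Lemma~\ref{lem:d_sigma_d_m_xi}'' to the covariance $\mathbb{E}(Y^2)\mathbb{E}(Y^4)-\mathbb{E}(Y^6)$: after the rescaling one gets $\partial_\kappa\xi(\sigma,0)=\sqrt{\sigma}\,Z(\sigma)\bigl(F_1(\sigma)-1\bigr)$ with $Z(\sigma)=\int\exp\bigl(-\tfrac{\sigma}{4}y^4+\tfrac{1-L_W}{2}y^2\bigr)dy$, and the $\sigma$-derivative contains the extra terms coming from $\sqrt{\sigma}Z(\sigma)$, whose signs are not controlled by Jensen. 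What your computation does give cleanly is the sign trichotomy (which is all the paper ever uses): $\operatorname{sign}\partial_\kappa\xi(\sigma,0)=\operatorname{sign}(F_1(\sigma)-1)$, $F_1$ is strictly decreasing by Lemma~\ref{lem:d_sigma_d_m_xi}, and $F_1(\sigma_c)=1$ is precisely \eqref{eq:cdt_crit_tug}; the literal monotonicity of $\sigma\mapsto\partial_\kappa\xi(\sigma,0)$ should be left to \cite{tugaut} or argued separately. Second, the inference ``$\partial_\kappa\xi(\sigma,0)\leq 0$ plus unimodality of $\kappa\mapsto\partial_\kappa\xi(\sigma,\kappa)$ implies $\partial_\kappa\xi<0$ on $\kappa>0$'' is not valid: a function that increases then decreases and starts nonpositive can still rise above zero, which would reintroduce extra zeros of $\xi$ in the regime $\sigma\geq\sigma_c$. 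The statement you actually need in that case is that $\kappa\mapsto\xi(\sigma,\kappa)$ is decreasing on $\kappa\geq0$, and this (like the ``increasing then decreasing'' shape for $\sigma<\sigma_c$, which upgrades ``at least three'' to ``exactly three'') must be imported from Tugaut's proof itself rather than deduced from the single-sign-change property you postulate. With those two repairs (argue signs through $F_1$, and cite \cite{tugaut} for the $\kappa$-monotonicity structure in both regimes), your write-up matches the role this lemma plays in the paper.
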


\paragraph{Moment bound :} Consider $(\bar{X}_t)_t$ the solution of \eqref{eq:NL}. Itô's formula yields
\begin{align*}
    dU(\bar{X}_t)=A_tdt+dM_t,
\end{align*}
where $M_t$ is a continuous local martingale and
\begin{align*}
    A_t=-U'\left(\bar{X}_t\right)^2-L_W\left(\bar{X}_t-\mathbb{E}\left(\bar{X}_t\right)\right)U'\left(\bar{X}_t\right)+\sigma U''\left(\bar{X}_t\right).
\end{align*}
There exists $\lambda>0$ and $C>0$, both independent of $\sigma\in[0,\sigma_c]$, such that for all $x\in\mathbb{R}$
\begin{align*}
    \sigma U''(x)+2\lambda U(x)\leq \frac{U'(x)^2}{2}+C,\hspace{1cm}
    \text{ and }\hspace{1cm} 2L_W^2x^2\leq \lambda U(x)+C.
\end{align*}
Consider for instance $\lambda=1$ and $C=\max\left(2\sigma+(2\sigma)^{3/2}-\frac{1}{2},\frac{(1+4 L_W^2)^2}{4}\right)$ for $U(x)=\frac{x^4}{4}-\frac{x^2}{2}$. Thus 
\begin{align*}
    A_t\leq C-\lambda U\left(\bar{X}_t\right)+\left(L_W^2\left(\bar{X}_t^2+\mathbb{E}\left(\bar{X}_t\right)^2\right)-\lambda U\left(\bar{X}_t\right) \right),
\end{align*}
and, using Fatou's lemma to deal with the local martingale, finally we obtain thanks to Gronwall's lemma
\begin{align*}
    \mathbb{E}U\left(\bar{X}_t\right)\leq e^{-\lambda t}\mathbb{E}U\left(\bar{X}_0\right)+\frac{2C}{\lambda}.
\end{align*}
Since $\mathbb{E}\left(\bar{X}_t\right)^2\leq\mathbb{E}\left(\bar{X}_t^2\right)\leq \frac{1}{2 L_W^2}\left(\lambda \mathbb{E}U\left(\bar{X}_t\right)+C\right)$, and considering $\bar{X}_0$ distributed according to a stationary distribution, we may conclude.

\paragraph{Value of $\kappa_2\left(\mu_{\sigma_c}\right)$ :} We rewrite Equation~\eqref{eq:cdt_crit_tug} defining $\sigma_c$, first by using the symmetry in $x$ to obtain
\begin{align*}
    \int_{\mathbb{R}}\left(x^2-\frac{1}{2L_W}\right)\exp\left((1-L_W)x^2-\sigma x^4\right)dx=0,
\end{align*}
and then, by a change of variable $x=\frac{y}{\sqrt{2\sigma}}$, this is equivalent to
\begin{align*}
    \int_{\mathbb{R}}\left(y^2-\frac{\sigma}{L_W}\right)\exp\left(-\frac{1}{\sigma}\left(\frac{y^4}{4}-\frac{1-L_W}{2}y^2\right)\right)dy=0.
\end{align*}
Finally, this amounts to having
\begin{align*}
    \frac{\sigma}{L_W}=\frac{\int_{\mathbb{R}}y^2\exp\left(-\frac{1}{\sigma}\left(\frac{y^4}{4}-\frac{1-L_W}{2}y^2\right)\right)dy}{\int_{\mathbb{R}}\exp\left(-\frac{1}{\sigma}\left(\frac{y^4}{4}-\frac{1-L_W}{2}y^2\right)\right)dy},
\end{align*}
which, since $\kappa_1(\mu_{\sigma_c})=0$, yields the value of $\kappa_2(\mu_{\sigma_c})$.

Consider then the function $\xi$ given \eqref{eq:def_xi}. We have
$\xi(\sigma,\kappa)=(f_1(\sigma,\kappa)-\kappa)\int g(x,\sigma,\kappa)dx$, and thus
\begin{align*}
    \partial_\kappa \xi(\sigma,\kappa)=(\partial_\kappa f_1(\sigma,\kappa)-1)\int g(x,\sigma,\kappa)dx+(f_1(\sigma,\kappa)-\kappa)\int \partial_\kappa g(x,\sigma,\kappa)dx.
\end{align*}
Considering the equation above for $\kappa=\kappa_1(\mu_{\sigma,*})$, we obtain
\begin{align*}
    \partial_\kappa \xi(\sigma,\kappa_1(\mu_{\sigma,*}))=(\partial_\kappa f_1(\sigma,\kappa_1(\mu_{\sigma,*}))-1)\int g(x,\sigma,\kappa_1(\mu_{\sigma,*}))dx.
\end{align*}
We may compute the derivatives of $f_1$ (see \eqref{eq:RBM_der_part} below), and obtain
\begin{align*}
    \partial_\kappa  \xi(\sigma,\kappa_1(\mu_{\sigma,*}))=\left(\frac{L_W}{\sigma}\kappa_2(\mu_{\sigma,*})-1\right)\int g(x,\sigma,\kappa_1(\mu_{\sigma,*}))dx.
\end{align*}
The values of $\partial_\kappa  \xi(\sigma,\kappa)$ for $\kappa=0$ and $\kappa=\kappa_1(\mu_{\sigma,+})$ depending on $\sigma$, as given in Lemma~\ref{lem:tech_tug}, yields the result.

\paragraph{Continuity of the moments :}Notice that $f_1$ given in \eqref{eq:def_f1} is continuous on $(\sigma,\kappa)\in\mathbb{R}^{+,*}\times\mathbb{R}^{+}$. We start by proving the continuity of $\sigma\mapsto \kappa_1(\mu_{\sigma,+})$, with the convention $\mu_{\sigma,+}=\mu_{\sigma,0}$ for $\sigma>\sigma_c$. In this latter case, the function $\sigma\mapsto \kappa_1(\mu_{\sigma,+})$ is trivially continuous as $\kappa_1(\mu_{\sigma,+})=0$. 

Let us show the continuity at the point $\sigma_c$. Let $(\sigma_n)_{n\in\mathbb{N}}$ be a sequence of positive real numbers such that $\sigma_n\xrightarrow[n\rightarrow\infty]{}\sigma_c$, and consider the (bounded) sequence $\left(\kappa_1(\mu_{\sigma_n,+})\right)_n$. Up to extraction, we can assume $\kappa_1(\mu_{\sigma_n,+})\xrightarrow[n\rightarrow\infty]{}\kappa_1\geq0$. We have, by definition, $\kappa_1(\mu_{\sigma_n,+})=f_1(\sigma_n,\kappa_1(\mu_{\sigma_n,+}))$ and, by considering the limit $n\rightarrow\infty$, thanks to the continuity of $f_1$, we obtain $\kappa_1=f_1(\sigma_c,\kappa_1)$. Uniqueness of the fixed point for $\sigma_c$ then ensures $\kappa_1=0=\kappa_1(\mu_{\sigma_c,+})$. Hence we obtain the desired continuity.

We now consider $\sigma<\sigma_c$. Assume there exists $\epsilon>0$ and a sequence $(\sigma_n)_{n\in\mathbb{N}}$ such that $\sigma_n\xrightarrow[n\rightarrow\infty]{}\sigma$ and $|\kappa_1(\mu_{\sigma_n,+})-\kappa_1(\mu_{\sigma,+})|>\epsilon$. Again, up to extraction, we have $\kappa_1(\mu_{\sigma_n,+})\xrightarrow[n\rightarrow\infty]{}\kappa_1\geq0$ and, since $\kappa_1$ is a fixed point that cannot be $\kappa_1(\mu_{\sigma,+})$, we have $\kappa_1=0$. Consider the (at least) twice continuously differentiable function $\xi$ given in \eqref{eq:def_xi}. On one hand, we have by continuity $\partial_\kappa\xi(\sigma_n, \kappa_1(\mu_{\sigma_n,+}))\xrightarrow[n\rightarrow\infty]{}\partial_\kappa\xi(\sigma, 0)>0$. On the other hand, by the properties of $\xi$ given in Theorem~\ref{lem:tech_tug} , we have $\partial_\kappa\xi(\sigma_n, \kappa_1(\mu_{\sigma_n,+}))<0$. Hence a contradiction, and $\kappa_1(\mu_{\sigma_n,+})\xrightarrow[n\rightarrow\infty]{}\kappa_1(\mu_{\sigma,+})$ for any sequence $\sigma_n\xrightarrow[n\rightarrow\infty]{}\sigma$. We thus obtain the continuity.

%
%
%
%

\subsection{On the variance of the stationary distribution(s) of \eqref{eq:NL}}\label{app:proof_lip}

Let $\sigma_0>0$, and let us show that the functions $\sigma\mapsto \kappa_2(\mu_{\sigma,0})$ and $\sigma\mapsto \kappa_2(\mu_{\sigma,\pm})$ are Lipschitz continuous. This is useful, as can be seen in Section~\ref{sec:phase_trans_eff}, in proving that there exists a phase transition for the effective dynamics \eqref{eq:Eff}.

Throughout this section, the constant $C$ holds no importance and may change from one line to the next.

\paragraph{We start by showing that $\sigma\mapsto \kappa_2(\mu_{\sigma,0})$ is Lipschitz continuous.}


\begin{lemma}\label{lem:var_lip}
Let $\sigma_0>0$ and $\kappa_1\in[-C_{\kappa_1},C_{\kappa_1}]$ (where $C_{\kappa_1}$ is given in \eqref{eq:borne_m1_tug}). The function $\sigma\mapsto f_2(\sigma,\kappa_1)$ is Lipschitz continuous on $[\sigma_0,\infty[$ uniformly in $\kappa_1\in[-C_{\kappa_1},C_{\kappa_1}]$.
\end{lemma}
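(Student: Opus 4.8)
The plan is to express $f_2(\sigma,\kappa_1)$ explicitly as a ratio of Gaussian-type integrals, reduce the $\sigma$-dependence to a single scalar parameter by a change of variables, and then differentiate under the integral sign, showing the derivative is bounded uniformly on $[\sigma_0,\infty[\times[-C_{\kappa_1},C_{\kappa_1}]$. First I would write, with $g$ as in \eqref{eq:def_g},
\[
f_2(\sigma,\kappa_1)=\frac{\int_{\mathbb R}(x-\kappa_1)^2\,g(x,\sigma,\kappa_1)\,dx}{\int_{\mathbb R}g(x,\sigma,\kappa_1)\,dx}\,,\qquad
g(x,\sigma,\kappa_1)=\exp\!\left(-\tfrac1\sigma\Bigl(\tfrac{x^4}{4}+\tfrac{L_W-1}{2}x^2-L_W\kappa_1 x+\tfrac{L_W}{2}\kappa_1^2\Bigr)\right).
\]
The constant term $\tfrac{L_W}{2\sigma}\kappa_1^2$ cancels between numerator and denominator, so it plays no role. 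A convenient substitution is $x=\sigma^{1/4}y$, which turns the quartic term into $-y^4/4$ (independent of $\sigma$) and leaves the remaining terms with manageable $\sigma$-powers: the quadratic term becomes $-\tfrac{L_W-1}{2\sqrt\sigma}y^2$ and the linear term $+\tfrac{L_W\kappa_1}{\sigma^{3/4}}y$. Writing $a=\tfrac{L_W-1}{\sqrt\sigma}$ and $b=\tfrac{L_W\kappa_1}{\sigma^{3/4}}$, one gets $f_2(\sigma,\kappa_1)=\sqrt\sigma\,\bigl(h(a,b)-(\kappa_1/\sigma^{1/4})^2\bigr)$ for a smooth function $h$ of $(a,b)$ built from the integrals $\int y^k e^{-y^4/4-\tfrac a2 y^2+by}\,dy$; actually it is cleaner to keep $\kappa_1$ as is and write $f_2(\sigma,\kappa_1)=\sqrt\sigma\,\Phi(a,b)$ where $\Phi(a,b)=\dfrac{\int (y-b')^2 e^{-y^4/4-\tfrac a2 y^2+by}dy}{\int e^{-y^4/4-\tfrac a2 y^2+by}dy}$ with $b'=\kappa_1\sigma^{-1/4}$, and then bound things directly.

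The key technical point is that all moment integrals $\int_{\mathbb R}|y|^k e^{-y^4/4-\tfrac a2 y^2+by}\,dy$ and their partial derivatives in $a,b$ are locally bounded, and the denominator $\int e^{-y^4/4-\tfrac a2 y^2+by}\,dy$ is bounded \emph{below} by a positive constant, uniformly for $(a,b)$ in any bounded set — here the quartic $-y^4/4$ dominates for $|a|,|b|$ bounded, giving Gaussian-type tail control that is uniform. On $[\sigma_0,\infty[$ with $|\kappa_1|\le C_{\kappa_1}$ one has $b'=\kappa_1\sigma^{-1/4}$ bounded and $b=L_W\kappa_1\sigma^{-3/4}$ bounded; the only unbounded parameter is $a=\tfrac{L_W-1}{\sqrt\sigma}$, but since $\sigma\ge\sigma_0$ we have $|a|\le |L_W-1|/\sqrt{\sigma_0}$, so $(a,b,b')$ ranges over a compact set. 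Hence $\Phi$ and all its first-order partials are bounded there, and since $\sigma\mapsto(a,b,b')$ is $C^1$ with bounded derivative on $[\sigma_0,\infty[$ (each component is $C\sigma^{-\text{(power)}}$ with the power positive), the chain rule gives $\bigl|\partial_\sigma f_2(\sigma,\kappa_1)\bigr|\le C$ for a constant depending only on $\sigma_0,L_W,C_{\kappa_1}$. This is the uniform Lipschitz bound claimed.

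The main obstacle — and the only place care is genuinely needed — is the uniform lower bound on the normalizing integral and the uniform upper bounds on the moments as $\sigma$ ranges over the \emph{unbounded} interval $[\sigma_0,\infty)$. The worry is the regime $\sigma\to\infty$: there $a\to0$ and $b,b'\to0$, so the density converges to $\propto e^{-y^4/4}$, which is perfectly integrable with all moments finite and strictly positive mass, so in fact this regime is the easiest. The regime $\sigma=\sigma_0$ is the ``worst'' but is a single compact slice of parameters. So concretely I would: (i) fix the parameter box $K=\{(a,b,b'):|a|\le|L_W-1|/\sqrt{\sigma_0},\ |b|\le L_W C_{\kappa_1}/\sigma_0^{3/4},\ |b'|\le C_{\kappa_1}/\sigma_0^{1/4}\}$; (ii) use the bound $e^{-y^4/4-\tfrac a2 y^2+by}\ge e^{-y^4/4-\tfrac{|a|}2 y^2-|b||y|}$ to get a uniform positive lower bound on the denominator over $K$, and the bound $|y|^k e^{-y^4/4-\tfrac a2 y^2+by}\le |y|^k e^{-y^4/4+\tfrac{|a|}2 y^2+|b||y|}\le C_k e^{-y^4/8}$ (absorbing polynomial and lower-order exponential growth into a fraction of the quartic) to get uniform upper bounds on numerator moments and on $\partial_a,\partial_b$ of all the integrals, which are again moments of the same type; (iii) conclude that $\Phi\in C^1(K)$ with bounded partials, and finish by the chain rule as above. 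Everything else is routine differentiation under the integral sign, justified by the dominated-convergence estimates just described.
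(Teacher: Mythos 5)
Your proposal is correct in substance but follows a genuinely different route from the paper. The paper keeps the original variable and differentiates $f_2$ directly: it writes $\partial_\sigma f_2$ as a $\frac{1}{\sigma^2}$ prefactor times (centred) moments of degree up to six against the unnormalised density $g$, bounds the denominator from below using $\sigma\ge\sigma_0$ together with the pointwise nonnegativity of $U(x)+\frac{L_W}{2}|x-\kappa_1|^2+C$ for $C=\frac{1+2L_W\kappa_1^2}{4}$, and bounds the moments from above by dominating the exponent with $\alpha x^2-(\alpha^2+\alpha)$ for the $\sigma$-dependent choice $\alpha=\sqrt{\sigma}/2$; this yields moments of order $\sigma^{k/2+1/4}$, which the $\sigma^{-2}$ prefactor absorbs. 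Your substitution $x=\sigma^{1/4}y$ packages the same scaling insight more structurally: it reduces everything to a smooth function $\Phi(a,b,b')$ on a compact parameter box, where the uniform lower bound on the normalising integral, the uniform moment bounds, and differentiation under the integral sign are indeed routine. The paper's computation is self-contained and delivers the power counting explicitly; yours makes the uniformity as $\sigma\to\infty$ conceptually transparent via compactness.

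One step in your argument needs to be stated more carefully. Since $f_2(\sigma,\kappa_1)=\sqrt{\sigma}\,\Phi\bigl(a(\sigma),b(\sigma),b'(\sigma)\bigr)$, the assertion ``bounded partials of $\Phi$ plus bounded derivatives of $\sigma\mapsto(a,b,b')$, hence $|\partial_\sigma f_2|\le C$ by the chain rule'' is not enough as written: it only shows that $\sigma\mapsto\Phi(a,b,b')$ is Lipschitz, whereas the product rule gives $\partial_\sigma f_2=\frac{1}{2\sqrt{\sigma}}\Phi+\sqrt{\sigma}\,\partial_\sigma\bigl(\Phi(a,b,b')\bigr)$, and the second term carries an extra factor $\sqrt{\sigma}$ that is unbounded on $[\sigma_0,\infty[$. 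You must invoke the explicit decay rates you already wrote down: $\partial_\sigma a=O(\sigma^{-3/2})$, $\partial_\sigma b=O(\sigma^{-7/4})$, $\partial_\sigma b'=O(\sigma^{-5/4})$, all of which decay faster than $\sigma^{-1/2}$, so that $\sqrt{\sigma}\,\partial_\sigma\bigl(\Phi(a,b,b')\bigr)=O(\sigma^{-3/4})$, while the first term is bounded by $\frac{1}{2\sqrt{\sigma_0}}\sup_K|\Phi|$. ``Power positive'' alone would not suffice; power strictly larger than $\tfrac12$ is what is needed, and it holds here, so with this adjustment your proof is complete and gives the claimed uniform Lipschitz bound.
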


Applying this lemma for $\kappa_1=0$ yields the desired Lipschitz continuity for $\kappa_2(\mu_{\sigma,0})$.


\begin{proof}[Proof of Lemma~\ref{lem:var_lip}]
Recall $g$ defined in \eqref{eq:def_g}. and consider $C=\frac{1+2L_W\kappa_1^2}{4}$, a constant such that, for $U$ given by \eqref{eq:def_U_W}, we ensure $U(x)+\frac{L_W}{2}x^2-L_Wx\kappa_1+C\geq0$. We have
\begin{align*}
    f_2(\sigma,\kappa_1)=\frac{\int_\mathbb{R}(x-\kappa_1)^2g(x,\sigma,\kappa_1)e^{-\frac{C}{\sigma}}dx}{\int_\mathbb{R} g(x,\sigma,\kappa_1)e^{-\frac{C}{\sigma}}dx},
\end{align*}
and thus
\begin{align}
    \left|\partial_\sigma f_2(\sigma,\kappa_1)\right|=&\frac{1}{\sigma^2}\left|\frac{\int_\mathbb{R}(x-\kappa_1)^2\left(U(x)+\frac{L_W}{2}x^2-L_Wx\kappa_1+C\right) g(x,\sigma,\kappa_1)dx\int_\mathbb{R} g(x,\sigma,\kappa_1)dx}{\left(\int_\mathbb{R} g(x,\sigma,\kappa_1)dx\right)^2}\right.\nonumber\\
    &\hspace{1cm}\left.-\frac{\int_\mathbb{R}(x-\kappa_1)^2 g(x,\sigma,\kappa_1)dx\int_\mathbb{R} \left(U(x)+\frac{L_W}{2}x^2-L_Wx\kappa_1+C\right)g(x,\sigma,\kappa_1)dx}{\left(\int_\mathbb{R} g(x,\sigma,\kappa_1)dx\right)^2}\right|\nonumber\\
    \leq&\frac{1}{\sigma^2}\frac{\int_\mathbb{R}(x-\kappa_1)^2\left(U(x)+\frac{L_W}{2}x^2-L_Wx\kappa_1+C\right) g(x,\sigma,\kappa_1)dx}{\int_\mathbb{R} g(x,\sigma,\kappa_1)dx}\label{eq:RBM_lip_estim_1}\\
    &+\frac{1}{\sigma^2}\frac{\int_\mathbb{R}(x-\kappa_1)^2 g(x,\sigma,\kappa_1)dx}{\int_\mathbb{R} g(x,\sigma,\kappa_1)dx}\frac{\int_\mathbb{R}\left(U(x)+\frac{L_W}{2}x^2-L_Wx\kappa_1+C\right) g(x,\sigma,\kappa_1)dx}{\int_\mathbb{R} g(x,\sigma,\kappa_1)dx}.\label{eq:RBM_lip_estim_2}
\end{align}
First
\begin{align*}
    \left(\int_\mathbb{R} g(x,\sigma,\kappa_1)dx\right)^{-1}\leq \left(\int_\mathbb{R} \exp\left(-\frac{1}{\sigma_0}\left(U(x)+\frac{L_W}{2}x^2-L_Wx\kappa_1+C\right)\right)dx\right)^{-1}.
\end{align*}
Then, for all $x\in\mathbb{R}$ and all $\alpha\geq0$, we have
\begin{align*}
    U(x)+\frac{L_W}{2}x^2-L_Wx\kappa_1+C=\frac{x^4}{4}-\frac{x^2}{2}+\frac{1}{4}+\frac{L_W}{2}|x-\kappa_1|^2\geq \alpha x^2-\beta_{\alpha},
\end{align*}
with 
\begin{align*}
    \beta_{\alpha}=&\frac{(2\alpha+1)^2}{4}-\frac{1}{4}=\alpha^2+\alpha.
\end{align*}
Thus, for all integers $k\geq0$, we have
\begin{align*}
    \int_\mathbb{R}x^{2k}\exp\left(-\frac{1}{\sigma}\left(U(x)+\frac{L_W}{2}x^2-L_Wx\kappa_1+C\right)\right)dx\leq& e^{\frac{\alpha^2+\alpha}{\sigma}} \int_\mathbb{R}x^{2k}\exp\left(-\frac{\alpha x^2}{\sigma}\right)dx\\
    =&e^{\frac{\alpha^2+\alpha}{\sigma}}\sqrt{2\pi}\frac{(2k)!}{2^kk!}\left(\frac{\sigma}{2\alpha}\right)^{k+\frac{1}{2}}.
\end{align*}
Choosing $\alpha=\frac{\sqrt{\sigma}}{2}$, we obtain
\begin{align*}
    \int_\mathbb{R}x^{2k}\exp\left(-\frac{1}{\sigma}\left(U(x)+\frac{L_W}{2}x^2-L_Wx\kappa_1+C\right)\right)dx\leq&e^{\frac{1}{4}+\frac{1}{2\sqrt{\sigma}}}\sqrt{2\pi}\frac{(2k)!}{2^kk!}\sigma^{\frac{k}{2}+\frac{1}{4}}.
\end{align*}
Hence there exists $C$, independent of $\sigma$, such that for $k\leq\frac{7}{2}$, $\frac{1}{\sigma^2}\int_\mathbb{R}x^{2k}g(\sigma,x)dx\leq C$ (which allows us to deal with \eqref{eq:RBM_lip_estim_1})
and for $k\leq2$, $\frac{1}{\sigma^{5/4}}\int_\mathbb{R}x^{2k}g(\sigma,x)dx\leq C$ and for $k\leq1$, $\frac{1}{\sigma^{3/4}}\int_\mathbb{R}x^{2k}g(\sigma,x)dx\leq C$ (both allow us to deal with \eqref{eq:RBM_lip_estim_2}). Thus, for $\sigma\geq \sigma_0$, $\left|\partial_\sigma f_2(\sigma,\kappa_1)\right|$ is bounded uniformly in $\kappa_1$, which yields the result.
\end{proof}

\paragraph{We now show that $\sigma\mapsto \kappa_2(\mu_{\sigma,+})$ is Lipschitz continuous.}

Let $\sigma_c>\sigma_0>0$. We have already proved, in Lemma~\ref{lem:var_lip}, that $\sigma\mapsto f_2(\sigma, \kappa_1)$ is Lipschitz continuous uniformly in $\kappa_1\in[-C_{\kappa_1},C_{\kappa_1}]$. However, the difficulty lies in the fact that, for $\kappa_2(\mu_{\sigma,+})$ given by $\kappa_2(\mu_{\sigma,+})=f_2(\sigma, \kappa_1(\mu_{\sigma,+}))$, the mean $\sigma\mapsto \kappa_1(\mu_{\sigma,+})$ is not Lipschitz continuous around $\sigma_c$. We will work our way around this fact (and, doing so, also prove it) in the rest of the subsection, but in the meantime this can be numerically observed in Figure~\ref{Fig:analyse_tugaut}.

Let us compute the various derivatives of $f_1$ and $f_2$ given in \eqref{eq:def_f1} and \eqref{eq:def_f2}.
\begin{align*}
    \partial_\sigma g(x,\sigma,\kappa)=&\frac{\left(U(x)+\frac{L_W}{2}|x-\kappa|^2\right)}{\sigma^2}g(x,\sigma,\kappa),\\
    \partial_\kappa g(x,\sigma,\kappa)=&\frac{L_W}{\sigma}(x-\kappa)g(x,\sigma,\kappa),\\
    \partial_\sigma f_1(\sigma,\kappa)=&\frac{1}{\left(\int_\mathbb{R} g\right)^2}\left(\int_\mathbb{R} g\int_\mathbb{R}x\partial_\sigma g-\int_\mathbb{R} \partial_\sigma g\int_\mathbb{R}xg\right)\\
    =&\frac{1}{\sigma^2}\left(\frac{\int_\mathbb{R}x\left(U(x)+\frac{L_W}{2}|x-\kappa|^2\right) g}{\int_\mathbb{R} g}-\frac{\int_\mathbb{R}\left(U(x)+\frac{L_W}{2}|x-\kappa|^2\right) g}{\int_\mathbb{R} g}\frac{\int_\mathbb{R}xg}{\int_\mathbb{R} g}\right),\\
    \partial_\kappa f_1(\sigma,\kappa)=&\frac{1}{\left(\int_\mathbb{R} g\right)^2}\left(\int_\mathbb{R} g\int_\mathbb{R}x\partial_\kappa g-\int_\mathbb{R} \partial_\kappa g\int_\mathbb{R}xg\right)\\
    =&\frac{L_W}{\sigma}\left(\frac{\int_\mathbb{R}x(x-\kappa) g}{\int_\mathbb{R} g}-\frac{\int_\mathbb{R}(x-\kappa) g}{\int_\mathbb{R} g}\frac{\int_\mathbb{R}xg}{\int_\mathbb{R} g}\right)\\
    =&\frac{L_W}{\sigma}\left(\frac{\int_\mathbb{R}x^2 g}{\int_\mathbb{R} g}-\left(\frac{\int_\mathbb{R}x g}{\int_\mathbb{R} g}\right)^2\right),\\
    \partial_\sigma f_2(\sigma,\kappa)=&\frac{1}{\left(\int_\mathbb{R} g\right)^2}\left(\int_\mathbb{R} g\int_\mathbb{R}(x-\kappa)^2\partial_\sigma g-\int_\mathbb{R} \partial_\sigma g\int_\mathbb{R}(x-\kappa)^2g\right)\\
    =&\frac{1}{\sigma^2}\left(\frac{\int_\mathbb{R}(x-\kappa)^2\left(U(x)+\frac{L_W}{2}|x-\kappa|^2\right) g}{\int_\mathbb{R} g}-\frac{\int_\mathbb{R}\left(U(x)+\frac{L_W}{2}|x-\kappa|^2\right) g}{\int_\mathbb{R} g}\frac{\int_\mathbb{R}(x-\kappa)^2g}{\int_\mathbb{R} g}\right),\\
    \partial_\kappa f_2(\sigma,\kappa)=&\frac{1}{\left(\int_\mathbb{R} g\right)^2}\left(\left(2\int_\mathbb{R}(\kappa-x)g+\int_\mathbb{R}(x-\kappa)^2\partial_\kappa g\right)\int_\mathbb{R} g-\int_\mathbb{R} \partial_\kappa g\int_\mathbb{R}(x-\kappa)^2g\right)\\
    =&\frac{2\int_\mathbb{R}(\kappa-x)g}{\int_\mathbb{R} g}+\frac{L_W}{\sigma}\left(\frac{\int_\mathbb{R}(x-\kappa)^3g}{\int_\mathbb{R} g}-\frac{\int_\mathbb{R}(x-\kappa)^2g}{\int_\mathbb{R} g}\frac{\int_\mathbb{R}(x-\kappa)g}{\int_\mathbb{R} g}\right).
\end{align*}
In particular, notice that 
\begin{equation}\label{eq:RBM_der_part}
    \partial_\kappa  f_1(\sigma,\kappa_1(\mu_{\sigma,*}))=\frac{L_W}{\sigma}\kappa_2(\mu_{\sigma,*})\ \ \ \text{ and }\ \ \ \partial_\kappa  f_2(\sigma,\kappa_1(\mu_{\sigma,*}))=\frac{L_W}{\sigma}\frac{\int_\mathbb{R}(x- \kappa_1(\mu_{\sigma,*}))^3g}{\int_\mathbb{R} g}.
\end{equation}
We have
\begin{align*}
    \frac{d}{d\sigma}\kappa_1(\mu_{\sigma,*})=&\partial_\sigma f_1(\sigma,\kappa_1(\mu_{\sigma,*}))+\partial_\kappa  f_1(\sigma,\kappa_1(\mu_{\sigma,*}))\frac{d}{d\sigma}\kappa_1(\mu_{\sigma,*}),\\
    \frac{d}{d\sigma}\kappa_2(\mu_{\sigma,*})=&\partial_\sigma f_2(\sigma,\kappa_1(\mu_{\sigma,*}))+\partial_\kappa  f_2(\sigma,\kappa_1(\mu_{\sigma,*}))\frac{d}{d\sigma}\kappa_1(\mu_{\sigma,*}).
\end{align*}
Thus
\begin{align}\label{eq:dm1}
    \frac{d}{d\sigma}\kappa_1(\mu_{\sigma,*})=\frac{\partial_\sigma f_1(\sigma,\kappa_1(\mu_{\sigma,*}))}{1-\partial_\kappa  f_1(\sigma,\kappa_1(\mu_{\sigma,*}))}=\frac{\partial_\sigma f_1(\sigma,\kappa_1(\mu_{\sigma,*}))}{1-\frac{L_W}{\sigma}\kappa_2(\mu_{\sigma,*})}.
\end{align}
By the results on the critical variance in Lemma~\ref{lem:res_NL}, $\kappa_1(\mu_{\sigma,*})$ is continuously differentiable on $]\sigma_0,\sigma_c[$. Likewise
\begin{align}
    \frac{d}{d\sigma}\kappa_2(\mu_{\sigma,*})&\left(1-\frac{L_W}{\sigma}\kappa_2(\mu_{\sigma,*})\right)\nonumber\\
    &=\left(1-\frac{L_W}{\sigma}\kappa_2(\mu_{\sigma,*})\right)\partial_\sigma f_2(\sigma,\kappa_1(\mu_{\sigma,*}))+\partial_\kappa  f_2(\sigma,\kappa_1(\mu_{\sigma,*}))\partial_\sigma f_1(\sigma,\kappa_1(\mu_{\sigma,*})).\label{eq:RBM_lip_estim_3}
\end{align}
The fact that $1-\frac{L_W}{\sigma}\kappa_2(\mu_{\sigma,*})$ goes to 0 as $\sigma\rightarrow\sigma_c$ is what prevents us from giving an upper bound on $\frac{d}{d\sigma}\kappa_1(\mu_{\sigma,*})$. \plb{By the result on the critical variance in Lemma~\ref{lem:res_NL} and by continuity, there may be a problem in the limit $\sigma\rightarrow\sigma_c^-$, but we wan already say that $\sigma\mapsto \kappa_2(\mu_{\sigma,+})$ is Lipschitz continuous on any interval of the form $[\sigma_1,\sigma_2]$ with $0<\sigma_1<\sigma_2<\sigma_c$. The following lemma gives a more precise speed of convergence of the mean to $0$ around the critical parameter.}

%
%

\begin{lemma}\label{lem:m1_racine}
There exists $C>0$ such that
\begin{align*}
    \frac{\kappa_1(\mu_{\sigma,+})}{\sqrt{\sigma_c-\sigma}}\xrightarrow[\sigma\rightarrow\sigma_c^-]{}C.
\end{align*}
\end{lemma}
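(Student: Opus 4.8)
The plan is to view $\kappa_1(\mu_{\sigma,+})$ as the positive root of the fixed-point equation $\kappa=f_1(\sigma,\kappa)$ and to Taylor-expand $f_1(\sigma,\cdot)$ at $\kappa=0$. First I would record two structural facts about $f_1$ from \eqref{eq:def_f1}: the change of variables $x\mapsto-x$, together with $U(-x)=U(x)$ and $|-x-(-\kappa)|=|x-\kappa|$, gives $f_1(\sigma,-\kappa)=-f_1(\sigma,\kappa)$, so $f_1(\sigma,0)=0$ and $\partial_\kappa^2 f_1(\sigma,0)=0$; and $f_1$ is $C^\infty$ on $]0,\infty[\times\mathbb{R}$ (differentiation under the integral sign is licit since $g$ from \eqref{eq:def_g} has Gaussian-type tails and $\int g\,dx$ is bounded below on compacts), so in particular $(\sigma,\kappa)\mapsto\partial_\kappa^3 f_1(\sigma,\kappa)$ is jointly continuous. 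Setting $F_1(\sigma)=\partial_\kappa f_1(\sigma,0)$, Taylor's formula with integral remainder then yields, for every $\kappa$,
\[
f_1(\sigma,\kappa)=F_1(\sigma)\kappa+\kappa^3\int_0^1\frac{(1-u)^2}{2}\,\partial_\kappa^3 f_1(\sigma,\kappa u)\,du .
\]

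Next I would substitute $\kappa=\kappa_1(\mu_{\sigma,+})$, which is strictly positive for $\sigma<\sigma_c$ by the very definition of $\mu_{\sigma,+}$, divide the fixed-point identity by $\kappa_1(\mu_{\sigma,+})$, and use $F_1(\sigma_c)=1$. The latter follows from \eqref{eq:RBM_der_part} with $*=0$ (so $\kappa_1(\mu_{\sigma,0})=0$ since $\mu_{\sigma,0}$ is symmetric), which gives $F_1(\sigma)=\frac{L_W}{\sigma}\kappa_2(\mu_{\sigma,0})$, together with the critical-variance identity $\kappa_2(\mu_{\sigma_c})=\sigma_c/L_W$ of Lemma~\ref{lem:res_NL}. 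This produces
\[
F_1(\sigma_c)-F_1(\sigma)=\kappa_1(\mu_{\sigma,+})^2\int_0^1\frac{(1-u)^2}{2}\,\partial_\kappa^3 f_1\bigl(\sigma,\kappa_1(\mu_{\sigma,+})u\bigr)\,du .
\]
By the continuity statement in Lemma~\ref{lem:res_NL}, $\kappa_1(\mu_{\sigma,+})\to\kappa_1(\mu_{\sigma_c,+})=0$ as $\sigma\to\sigma_c^-$; combined with the joint continuity of $\partial_\kappa^3 f_1$ and Lemma~\ref{lem:der_3} (which gives $\partial_\kappa^3 f_1(\sigma_c,0)<0$), the integral factor converges to $\tfrac16\partial_\kappa^3 f_1(\sigma_c,0)\neq0$, hence is nonzero for $\sigma$ close to $\sigma_c$, and we may solve for $\kappa_1(\mu_{\sigma,+})^2$.

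Finally I would divide by $\sigma_c-\sigma$. Since $F_1$ is $C^1$ with $F_1'(\sigma_c)<0$ by Lemma~\ref{lem:d_sigma_d_m_xi}, we have $\frac{F_1(\sigma_c)-F_1(\sigma)}{\sigma_c-\sigma}=\frac1{\sigma_c-\sigma}\int_\sigma^{\sigma_c}F_1'(t)\,dt\to F_1'(\sigma_c)$, so that
\[
\frac{\kappa_1(\mu_{\sigma,+})^2}{\sigma_c-\sigma}\xrightarrow[\sigma\to\sigma_c^-]{}\frac{6\,F_1'(\sigma_c)}{\partial_\kappa^3 f_1(\sigma_c,0)}=:C^2>0 ,
\]
the limit being positive as the ratio of two negative quantities. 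Since $\kappa_1(\mu_{\sigma,+})>0$ and $\sigma_c-\sigma>0$, taking square roots gives $\kappa_1(\mu_{\sigma,+})/\sqrt{\sigma_c-\sigma}\to C$ with $C=\sqrt{6F_1'(\sigma_c)/\partial_\kappa^3 f_1(\sigma_c,0)}>0$. The only delicate point is the joint smoothness of $f_1$ and the passage to the limit in the remainder integral near $(\sigma_c,0)$, which is routine given the Gaussian tail bounds; the rest is a careful bookkeeping of signs from the already-established Lemmas~\ref{lem:res_NL}, \ref{lem:der_3}, and \ref{lem:d_sigma_d_m_xi}, and I do not anticipate any genuine obstacle beyond assembling these ingredients in the right order.
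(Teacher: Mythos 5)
Your proof is correct and follows essentially the same strategy as the paper: expand the fixed-point equation $\kappa_1(\mu_{\sigma,+})=f_1(\sigma,\kappa_1(\mu_{\sigma,+}))$ to third order in $\kappa$ around $0$ (using the oddness of $f_1(\sigma,\cdot)$), identify $F_1(\sigma_c)=\partial_\kappa f_1(\sigma_c,0)=1$ via the critical-variance identity, and conclude from $F_1'(\sigma_c)<0$ (Lemma~\ref{lem:d_sigma_d_m_xi}) and $\partial_\kappa^3 f_1(\sigma_c,0)<0$ (Lemma~\ref{lem:der_3}), arriving at the same limiting constant. The only difference is technical: your exact integral-remainder identity $1-F_1(\sigma)=\kappa_1(\mu_{\sigma,+})^2\int_0^1\frac{(1-u)^2}{2}\,\partial_\kappa^3 f_1\bigl(\sigma,u\,\kappa_1(\mu_{\sigma,+})\bigr)\,du$ lets you bypass the paper's preliminary estimate \eqref{eq:intermediaire} (that $\sigma_c-\sigma=O(\kappa_1(\mu_{\sigma,+})^2)$), which the paper only needs in order to reconcile its $o(\kappa^3)$ and $o(\sigma_c-\sigma)$ error terms.
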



\begin{proof}
\pierre{We restrict the study to $\sigma\in[\sigma_0,\sigma_c[$ for some arbitrary $\sigma_0>0$. } We compute
\begin{align*}
    \partial_\kappa  f_1(\sigma,\kappa)=&\frac{L_W}{\sigma}\left(\frac{\int_\mathbb{R}x^2 g}{\int_\mathbb{R} g}-\left(\frac{\int_\mathbb{R}x g}{\int_\mathbb{R} g}\right)^2\right),\\
    \partial_{\kappa}^2f_1(\sigma,\kappa)=&\left(\frac{L_W}{\sigma}\right)^2\left(\frac{\int_\mathbb{R}x^3 g}{\int_\mathbb{R} g}-3\frac{\int_\mathbb{R}x^2 g}{\int_\mathbb{R} g}\frac{\int_\mathbb{R}x g}{\int_\mathbb{R} g}+2\left(\frac{\int_\mathbb{R}x g}{\int_\mathbb{R} g}\right)^3\right),\\
    \partial_{\kappa}^3f_1(\sigma,\kappa)=&\left(\frac{L_W}{\sigma}\right)^3\left(\frac{\int_\mathbb{R}x^4 g}{\int_\mathbb{R} g}-4\frac{\int_\mathbb{R}x^3 g}{\int_\mathbb{R} g}\frac{\int_\mathbb{R}x g}{\int_\mathbb{R} g}\plb{+12\frac{\int_\mathbb{R}x^2 g}{\int_\mathbb{R} g}\left(\frac{\int_\mathbb{R}x g}{\int_\mathbb{R} g}\right)^2-}6\left(\frac{\int_\mathbb{R}x g}{\int_\mathbb{R} g}\right)^4-3\left(\frac{\int_\mathbb{R}x^2 g}{\int_\mathbb{R} g}\right)^2\right).
\end{align*}
In particular
\begin{align*}
    f_1(\sigma,0)=&0,\\
    \partial_\kappa  f_1(\sigma,0)=&\frac{L_W}{\sigma}\frac{\int_\mathbb{R}x^2 g}{\int_\mathbb{R} g}>0,\\
    \partial_{\kappa}^2f_1(\sigma,0)=&0,
\end{align*}
and, by Lemma~\ref{lem:der_3}
\begin{equation}\label{eq:der_3}
    \partial_{\kappa}^3f_1(\sigma,0)=\left(\frac{L_W}{\sigma}\right)^3\left(\frac{\int_\mathbb{R}x^4 g}{\int_\mathbb{R} g}-3\left(\frac{\int_\mathbb{R}x^2 g}{\int_\mathbb{R} g}\right)^2\right)\xrightarrow[]{\sigma\rightarrow\sigma_c^-}\partial_{\kappa}^3f_1(\sigma_c,0)<0.
\end{equation}
Let us start by proving that there exists $C>0$ such that, in the limit $\sigma\rightarrow\sigma_c$ (or equivalently $\kappa_1(\mu_{\sigma,+})\rightarrow0$), we have
\begin{align}\label{eq:intermediaire}
    \frac{\sigma_c-\sigma}{\kappa_1(\mu_{\sigma,+})^2}<C+o(1).
\end{align}
We compute
\begin{align*}
    \partial_\kappa f_1(\sigma,\kappa)=\partial_\kappa f_1(\sigma,0)+\kappa\partial^2_{\kappa}f_1(\sigma,0)+\frac{\kappa^2}{2}\partial^3_{\kappa}f_1(\sigma,0)+o(\kappa^2).
\end{align*}
By Lemma~\ref{lem:d_sigma_d_m_xi}, there exists $C>0$ such that
\begin{align*}
    \partial_\kappa f_1(\sigma,0)\geq\partial_\kappa f_1(\sigma_c,0)+C(\sigma_c-\sigma)=1+C(\sigma_c-\sigma).
\end{align*}
Since $\partial_\kappa  f_1(\sigma_c,0)=\partial_\kappa  f_1(\sigma_c,\kappa_1(\mu_{\sigma_c}))=\frac{L_W}{\sigma_c}\kappa_2(\mu_{\sigma_c})=1$ by \eqref{eq:val_var_crit}
\begin{align*}
    \partial_\kappa f_1(\sigma,\kappa)\geq1+C(\sigma_c-\sigma)+\frac{\kappa^2}{2}\left(\partial^3_{\kappa}f_1(\sigma_c,0)+o_{\sigma\rightarrow\sigma_c}(1)\right)+o(\kappa^2),
\end{align*}
where the $o(\kappa^2)$ is uniform in $\sigma\in[\sigma_0,\sigma_c]$. Considering $\kappa=\kappa_1(\mu_{\sigma,+})$, which goes to 0 as $\sigma\rightarrow\sigma_c$, in the equation above yields
\begin{align*}
    \frac{L_W}{\sigma}\kappa_2(\mu_{\sigma,+})\geq 1+C(\sigma_c-\sigma)+\frac{\kappa_1(\mu_{\sigma,+})^2}{2}\left(\partial^3_{\kappa}f_1(\sigma_c,0)+o_{\sigma\rightarrow\sigma_c}(1)\right)+o(\kappa_1(\mu_{\sigma,+})^2).
\end{align*}
By the results of Lemma~\ref{lem:res_NL} concerning the critical variance, $\frac{L_W}{\sigma}\kappa_2(\mu_{\sigma,+})\leq1$, which gives
\begin{align*}
    0\geq \frac{C(\sigma_c-\sigma)}{\kappa_1(\mu_{\sigma,+})^2}+\frac{\partial^3_{\kappa}f_1(\sigma_c,0)}{2}+o_{\sigma\rightarrow\sigma_c}(1).
\end{align*}
This gives \eqref{eq:intermediaire} and this in turns allows us to state that $\sigma_c-\sigma=O(\kappa_1(\mu_{\sigma,+})^2)$.\\

We then have
\begin{align*}
    f_1(\sigma,\kappa)=&f_1(\sigma,0)+\kappa\partial_\kappa  f_1(\sigma,0)+\frac{\kappa^2}{2}\partial_{\kappa}^2f_1(\sigma,0)+\frac{\kappa^3}{6}\partial_{\kappa}^3f_1(\sigma,0)+o(\kappa^3)\\
    =&\kappa\partial_\kappa  f_1(\sigma,0)+\frac{\kappa^3}{6}\partial_{\kappa}^3f_1(\sigma,0)+o(\kappa^3).
\end{align*}
Furthermore, defining $F_1$ as in Lemma~\ref{lem:d_sigma_d_m_xi}, we obtain
\begin{align*}
    \partial_\kappa  f_1(\sigma,0)=F_1(\sigma)=&F_1(\sigma_c)-(\sigma_c-\sigma)F'_1(\sigma_c)+\plb{o(\sigma_c-\sigma)}\\
    =&1-(\sigma_c-\sigma)F'_1(\sigma_c)+o(\sigma_c-\sigma),
\end{align*}
which then yields
\begin{align*}
    f_1(\sigma,\kappa)=&\kappa\left(1-(\sigma_c-\sigma)F'_1(\sigma_c)+o(\sigma_c-\sigma)\right)+\frac{\kappa^3}{6}\left(\partial_{\kappa}^3f_1(\sigma_c,0)+o(1)\right)+o(\kappa^3).
\end{align*}
Since $\kappa_1(\mu_{\sigma,+})=f_1(\sigma,\kappa_1(\mu_{\sigma,+}))$, we thus get in the limit $\sigma\rightarrow\sigma_c^-$
\begin{align}
    0=&-\kappa_1(\mu_{\sigma,+})(\sigma_c-\sigma)F'_1(\sigma_c)+\frac{\kappa_1(\mu_{\sigma,*})^3}{6}\partial_{\kappa}^3f_1(\sigma_c,0)\\
    &+o(\kappa_1(\mu_{\sigma,+})^3)+\kappa_1(\mu_{\sigma,+})o(\sigma_c-\sigma)\nonumber\\
    0=&-(\sigma_c-\sigma)F'_1(\sigma_c)+\frac{\kappa_1(\mu_{\sigma,+})^2}{6}\partial_{\kappa}^3f_1(\sigma_c,0)+o(\kappa_1(\mu_{\sigma,+})^2)\label{eq:comp_ordre_sigma_k_1}.
\end{align}
 Thus, thanks to \eqref{eq:der_3} and Lemma~\ref{lem:d_sigma_d_m_xi}, there exists $C>0$ such that 
\begin{align*}
    \frac{\sigma_c-\sigma}{\kappa_1(\mu_{\sigma,+})^2}\xrightarrow[]{\sigma\rightarrow\sigma_c^-}C,
\end{align*}
which yields the final result.
\end{proof}

%
%

\begin{lemma}
Let $\sigma_0\in]0,\sigma_c[$. Then $\sigma \mapsto \kappa_2(\mu_{\sigma,+})$ is Lispchitz continuous on $[\sigma_0,\sigma_c]$.
\end{lemma}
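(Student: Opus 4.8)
The plan is to combine the uniform Lipschitz continuity of $\sigma\mapsto f_2(\sigma,\kappa_1)$ from Lemma~\ref{lem:var_lip} with the square-root behaviour of the mean near $\sigma_c$ from Lemma~\ref{lem:m1_racine}. Recall that $\kappa_2(\mu_{\sigma,+})=f_2(\sigma,\kappa_1(\mu_{\sigma,+}))$, so the only obstruction to Lipschitz continuity is that $\sigma\mapsto\kappa_1(\mu_{\sigma,+})$ fails to be Lipschitz at $\sigma_c$ (it behaves like $\sqrt{\sigma_c-\sigma}$). The idea is that this defect is ``cancelled'' because $f_2$ depends on $\kappa_1$ only through $\kappa_1^2$ up to higher order (by symmetry of the problem at $\kappa_1 = 0$, i.e. $\partial_\kappa f_2(\sigma,0)=0$), so the composition is still Lipschitz.

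First I would split $[\sigma_0,\sigma_c]$ into $[\sigma_0,\sigma_c-\eta]$ and $[\sigma_c-\eta,\sigma_c]$ for some small $\eta>0$. On $[\sigma_0,\sigma_c-\eta]$, $\sigma\mapsto\kappa_1(\mu_{\sigma,+})$ is $C^1$ (by the continuity/differentiability statements already established via \eqref{eq:dm1}, since $1-\frac{L_W}{\sigma}\kappa_2(\mu_{\sigma,+})$ stays bounded away from $0$ there), hence Lipschitz, and $f_2$ is Lipschitz in both variables on the relevant compact set, so the composition is Lipschitz by the chain rule. The work is on $[\sigma_c-\eta,\sigma_c]$.

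On $[\sigma_c-\eta,\sigma_c]$ I would write, for $\sigma<\sigma'$ both in this interval,
\begin{align*}
    \left|\kappa_2(\mu_{\sigma',+})-\kappa_2(\mu_{\sigma,+})\right| \le& \left|f_2(\sigma',\kappa_1(\mu_{\sigma',+}))-f_2(\sigma,\kappa_1(\mu_{\sigma',+}))\right|\\
    &+\left|f_2(\sigma,\kappa_1(\mu_{\sigma',+}))-f_2(\sigma,\kappa_1(\mu_{\sigma,+}))\right|.
\end{align*}
The first term is $\le C|\sigma'-\sigma|$ by Lemma~\ref{lem:var_lip} (uniformly in the mean since $|\kappa_1(\mu_{\sigma',+})|\le C_{\kappa_1}$). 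For the second term, I would use that $\kappa\mapsto f_2(\sigma,\kappa)$ satisfies $\partial_\kappa f_2(\sigma,0)=0$ (which follows from $\partial_\kappa f_2(\sigma,\kappa_1(\mu_{\sigma,*}))=\frac{L_W}{\sigma}\frac{\int(x-\kappa_1(\mu_{\sigma,*}))^3g}{\int g}$ evaluated at the symmetric point $\kappa_1=0$, the odd moment vanishing), together with a bound $|\partial_\kappa^2 f_2(\sigma,\kappa)|\le C$ uniform for $\sigma\in[\sigma_0,\sigma_c]$, $|\kappa|\le C_{\kappa_1}$ — this last bound is obtained exactly as in the proof of Lemma~\ref{lem:var_lip} by the same Gaussian-domination estimates on the moments $\int x^{2k}g\,dx / \int g\,dx$. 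A second-order Taylor expansion then gives $|f_2(\sigma,\kappa)-f_2(\sigma,0)|\le C\kappa^2$, hence
\[\left|f_2(\sigma,\kappa_1(\mu_{\sigma',+}))-f_2(\sigma,\kappa_1(\mu_{\sigma,+}))\right|\le C\left(\kappa_1(\mu_{\sigma',+})^2+\kappa_1(\mu_{\sigma,+})^2\right)\le C\left((\sigma_c-\sigma')+(\sigma_c-\sigma)\right),\]
where the last inequality uses Lemma~\ref{lem:m1_racine} (precisely $\kappa_1(\mu_{\sigma,+})^2=O(\sigma_c-\sigma)$, which already appears as \eqref{eq:intermediaire} / the conclusion $\sigma_c-\sigma=O(\kappa_1(\mu_{\sigma,+})^2)$ inverted). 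Since $\sigma_c-\sigma\le\sigma_c-\sigma_c+\eta$ is not small enough on its own, I note that $(\sigma_c-\sigma)\le|\sigma'-\sigma|+(\sigma_c-\sigma')$ and $(\sigma_c-\sigma')\le|\sigma'-\sigma|$ when... actually the cleaner route: bound $\kappa_1(\mu_{\sigma,+})^2-\kappa_1(\mu_{\sigma',+})^2$ directly. I would instead use $|f_2(\sigma,\kappa)-f_2(\sigma,\kappa')|\le C|\kappa^2-\kappa'^2|$ from a Taylor expansion with the $\partial_\kappa f_2(\sigma,0)=0$ cancellation written as $f_2(\sigma,\kappa)=f_2(\sigma,0)+\frac{\kappa^2}{2}\partial_\kappa^2 f_2(\sigma,0)+O(\kappa^4)$ being even in $\kappa$ up to controlled error — and then show $\sigma\mapsto\kappa_1(\mu_{\sigma,+})^2$ is Lipschitz on $[\sigma_c-\eta,\sigma_c]$. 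This last point follows from \eqref{eq:comp_ordre_sigma_k_1}: rearranged, $\kappa_1(\mu_{\sigma,+})^2=\frac{6F_1'(\sigma_c)}{-\partial_\kappa^3 f_1(\sigma_c,0)}(\sigma_c-\sigma)+o(\sigma_c-\sigma)$, and a slightly more careful version of that expansion (differentiating, or comparing two values of $\sigma$) gives $|\kappa_1(\mu_{\sigma',+})^2-\kappa_1(\mu_{\sigma,+})^2|\le C|\sigma'-\sigma|$ near $\sigma_c$.

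The main obstacle is making the ``$\kappa_1^2$ is Lipschitz in $\sigma$'' step rigorous near $\sigma_c$: Lemma~\ref{lem:m1_racine} as stated only gives the asymptotic ratio $\kappa_1(\mu_{\sigma,+})/\sqrt{\sigma_c-\sigma}\to C$, not a Lipschitz bound on the square. One needs either to sharpen \eqref{eq:comp_ordre_sigma_k_1} to an expansion with an error term that is differentiable/Lipschitz in $\sigma$ (feasible since all the quantities $F_1$, $\partial_\kappa^3 f_1(\cdot,0)$ are $C^1$ by Lemmas~\ref{lem:der_3} and~\ref{lem:d_sigma_d_m_xi}), or to argue via the implicit function theorem applied to $h(\sigma,v):=f_1(\sigma,\sqrt v)/\sqrt v - 1 = 0$ in the variable $v=\kappa_1^2$, whose $\partial_v$ at $(\sigma_c,0)$ equals $\frac16\partial_\kappa^3 f_1(\sigma_c,0)\neq 0$, yielding $v=\kappa_1(\mu_{\sigma,+})^2$ as a $C^1$ (hence locally Lipschitz) function of $\sigma$ on a neighbourhood of $\sigma_c$. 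Combining this with the first-term bound above and the $[\sigma_0,\sigma_c-\eta]$ estimate closes the argument.
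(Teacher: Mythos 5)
Your proposal is correct and rests on the same cancellation as the paper, but it is packaged differently. The paper works directly with the derivative: starting from \eqref{eq:dm1} it expands $1-\partial_\kappa f_1(\sigma,\kappa_1(\mu_{\sigma,+}))$ to show it is of order $\kappa_1(\mu_{\sigma,+})^2$ (using \eqref{eq:comp_ordre_sigma_k_1}, Lemma~\ref{lem:der_3} and Lemma~\ref{lem:d_sigma_d_m_xi}), deduces $\left|\tfrac{d}{d\sigma}\kappa_1(\mu_{\sigma,+})\right|\lesssim 1/\kappa_1(\mu_{\sigma,+})$, and then multiplies by $\left|\partial_\kappa f_2(\sigma,\kappa_1(\mu_{\sigma,+}))\right|\lesssim \kappa_1(\mu_{\sigma,+})$ (from $\partial_\kappa f_2(\sigma,0)=0$) to bound $\tfrac{d}{d\sigma}\kappa_2(\mu_{\sigma,+})$ uniformly on $[\sigma_0,\sigma_c[$. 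You instead change variables to $v=\kappa_1^2$: you split the increment, use Lemma~\ref{lem:var_lip} for the $\sigma$-part, prove $|f_2(\sigma,\kappa)-f_2(\sigma,\kappa')|\le C|\kappa^2-\kappa'^2|$ (valid here since both means are nonnegative, via $|\partial_\kappa f_2(\sigma,s)|\le Cs$), and establish that $\sigma\mapsto\kappa_1(\mu_{\sigma,+})^2$ is Lipschitz near $\sigma_c$ through an implicit-function argument for $h(\sigma,v)=f_1(\sigma,\sqrt v)/\sqrt v-1$, whose $v$-derivative at $(\sigma_c,0)$ is indeed $\tfrac16\partial_\kappa^3 f_1(\sigma_c,0)\neq0$ and whose $\sigma$-derivative is bounded because $\partial_\sigma f_1(\sigma,0)=0$. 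This buys a cleaner argument that avoids the paper's somewhat delicate $o(\cdot)$ bookkeeping in the expansion of $1-\partial_\kappa f_1$; what it costs is two points you should make explicit: (i) $h$ must be shown $C^1$ in $v$ up to $v=0$ (e.g.\ via $h(\sigma,v)=\int_0^1\partial_\kappa f_1(\sigma,t\sqrt v)\,dt-1$ and $\partial_\kappa^2 f_1(\sigma,0)=0$, or an even-function/Whitney extension), since $v=0$ sits on the boundary of the natural domain; and (ii) the IFT branch must be identified with $\kappa_1(\mu_{\sigma,+})^2$, which follows from the continuity $\kappa_1(\mu_{\sigma,+})\to0$ as $\sigma\to\sigma_c^-$ already proved in the paper together with local uniqueness. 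With these two points filled in, your argument closes the proof, and the self-identified gap (that Lemma~\ref{lem:m1_racine} alone gives only an asymptotic ratio) is genuinely resolved by your second route.
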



\begin{proof}
\plb{
Let us write for all $(\sigma,\kappa)\in[\sigma_0,\sigma_c]\times[-C_{\kappa_1},C_{\kappa_1}]$
\begin{align*}
    \partial_\kappa  f_1(\sigma,\kappa)=&\partial_\kappa  f_1(\sigma,0)+\kappa \partial^2_\kappa  f_1(\sigma,0)+\frac{\kappa^2}{2} \partial^3_\kappa  f_1(\sigma,0)+o(\kappa^3)\\
    =&\left(F_1(\sigma_c)-(\sigma_c-\sigma)F'_1(\sigma_c)+o(\sigma_c-\sigma)\right)+\frac{\kappa^2}{2} \left(\partial^3_\kappa  f_1(\sigma_c,0)+O(\sigma_c-\sigma)\right)+o(\kappa^3),
\end{align*}
where we used the notation $F_1$ from Lemma~\ref{lem:d_sigma_d_m_xi}, the fact that $\partial^2_\kappa  f_1(\sigma,0)=0$, and where all notation $o(\cdot)$ and $O(\cdot)$ are uniform in $(\sigma,\kappa)\in[\sigma_0,\sigma_c]\times[-C_{\kappa_1},C_{\kappa_1}]$ by continuity. Since $F_1(\sigma_c)=1$, we obtain
\begin{align*}
1-\partial_\kappa  f_1(\sigma,\kappa)=(\sigma_c-\sigma)F'_1(\sigma_c)-\frac{\kappa^2}{2}\partial^3_\kappa  f_1(\sigma_c,0)+o(\sigma_c-\sigma)+o(\kappa^3)+\pierre{\kappa^2} O(\sigma_c-\sigma).
\end{align*}
Applying this for $\kappa=\kappa_1(\mu_{\sigma,+})$, and using \eqref{eq:comp_ordre_sigma_k_1} \pierre{ and Lemma~\ref{lem:m1_racine}}, we obtain
\begin{align*}
1-\partial_\kappa  f_1(\sigma,\kappa_1(\mu_{\sigma,+}))=&\frac{\kappa_1(\mu_{\sigma,+})^2}{6}\partial^3_\kappa  f_1(\sigma_c,0)-\frac{\kappa_1(\mu_{\sigma,+})^2}{2}\partial^3_\kappa  f_1(\sigma_c,0)+o(\kappa_1(\mu_{\sigma,+})^2)\\
=&-\frac{\kappa_1(\mu_{\sigma,+})^2}{3}\partial^3_\kappa  f_1(\sigma_c,0)+o(\kappa_1(\mu_{\sigma,+})^2).
\end{align*}
Hence, using \eqref{eq:dm1}, for all $\sigma\in[\sigma_0,\sigma_c[$
\begin{align*}
    \left|\frac{d}{d\sigma}\kappa_1(\mu_{\sigma,+})\right|=\frac{|\partial_\sigma f_1(\sigma,\kappa_1(\mu_{\sigma,+}))|}{|1-\partial_\kappa  f_1(\sigma,\kappa_1(\mu_{\sigma,+}))|}\lesssim \frac{C\kappa_1(\mu_{\sigma,+})}{\kappa_1(\mu_{\sigma,+})^2+o(\kappa_1(\mu_{\sigma,+})^2)} \lesssim \frac{1}{\kappa_1(\mu_{\sigma,+})}+o(\kappa_1(\mu_{\sigma,+})),
\end{align*}
where we used that $\partial_\sigma f_1(\sigma,0)=0$ \pierre{and that $\partial_\kappa\partial_\sigma f_1$ is bounded over $[\sigma_0,\sigma_c]\times[-C_{\kappa_1},C_{\kappa_1}]$ to bound the numerator, and Lemma~\ref{lem:der_3} for the denominator}.
}
Besides, since $\partial_\kappa f_2(\sigma,0)=0$ (by \eqref{eq:RBM_der_part} and symmetry),
\begin{align*}
    \left|\partial_\kappa  f_2(\sigma, \kappa_1(\mu_{\sigma,+})\right|=&\left|\partial_\kappa  f_2(\sigma, \kappa_1(\mu_{\sigma,+}))-\partial_\kappa  f_2(\sigma, 0)\right|\lesssim |\kappa_1(\mu_{\sigma,+})|\plb{+o(\kappa_1(\mu_{\sigma,+}))}
\end{align*}
and thus we bound for all $\sigma \in [\sigma_0,\sigma_c[$
\begin{align*}
    \left|\frac{d}{d\sigma}\kappa_2(\mu_{\sigma,+})\right|\leq&\left|\partial_\sigma f_2(\sigma, \kappa_1(\mu_{\sigma,+}))\right|+\left|\frac{d}{d\sigma}\kappa_1(\mu_{\sigma,+})\right|\left|\partial_\kappa  f_2(\sigma, \kappa_1(\mu_{\sigma,+}))\right|\plb{+o(\kappa_1(\mu_{\sigma,+}))}\\
    \lesssim&1+ \frac{\left|\partial_\kappa  f_2(\sigma, \kappa_1(\mu_{\sigma,+})\right|}{\kappa_1(\mu_{\sigma,+})}\plb{+o(1)}\\
    \lesssim& 1\plb{+o(1)}.
\end{align*}
\plb{By continuity (recall that $1-\partial_\kappa  f_1(\sigma,\kappa_1(\mu_{\sigma,+}))=1-\frac{L_W}{\sigma}\kappa_2(\mu_{\sigma,+})>0$ for $\sigma<\sigma_c$) , }this proves that $\sigma \mapsto \kappa_2(\mu_{\sigma,+})$  is Lipschitz on $[\sigma_0,\sigma_c]$.
\end{proof}

%
%

\section*{Acknowledgements}
This work has been (partially) supported by the Project EFI ANR-17-CE40-0030 of the French National Research Agency. P. Monmarché's research is supported by the Project SWIDIMS ANR-20-CE40-0022 of the French National Research Agency.

\bibliographystyle{alpha}
\bibliography{bibliographie}

\end{document}